\newtheorem{algo}{Algorithm}
\renewcommand{\algocf@captiontext}[2]{#1\algocf@typo. \AlCapFnt{}#2} 
\def\@algocf@capt@plain{top}
\renewcommand{\algocf@makecaption}[2]{%
  \addtolength{\hsize}{\algomargin}%
  \sbox\@tempboxa{\algocf@captiontext{#1}{#2}}%
  \ifdim\wd\@tempboxa >\hsize
    \hskip .5\algomargin%
    \parbox[t]{\hsize}{\algocf@captiontext{#1}{#2}}
  \else%
    \global\@minipagefalse%
    \hbox to\hsize{\box\@tempboxa}
  \fi%
  \addtolength{\hsize}{-\algomargin}%
}
\theoremstyle{definition}
\newtheorem{ccounter}{ccounter}[section]
\newtheorem{cccounter}{cccounter}
\newtheorem{theorem}[ccounter]{Theorem}
\newtheorem{lemma}[ccounter]{Lemma}
\newtheorem{remark}[ccounter]{Remark}
\newtheorem{assumption}[cccounter]{Assumption}
\newcommand\E{\mathbb{E}}
\renewcommand\P{\mathbb{P}}
\newcommand\R{\mathbb{R}}
\newcommand\be{\begin{equation}}
\newcommand\ee{\end{equation}}
\def\ber{\color{blue}}
\def\eer{\normalcolor}
\def\one{{\mathbbm 1}}
\newcommand\Var{\operatorname{Var}}
\newcommand{\eps}{\varepsilon}
\renewcommand{\epsilon}{\varepsilon}
\renewcommand{\hat}{\widehat}
\newcommand{\bs}{} 
\newcommand{\onen}{[n]} 
\newcommand{\cace}{\operatorname{LATE}}
\renewcommand{\phi}{\varphi}
\newcommand{\reg}{{\text{adj}}}
\newcommand{\Chi}{|z_{1-\alpha}|}
\newcommand{\nmc}{m}
\DeclareMathOperator*{\argmin}{arg\,min}
\title[Randomization-based Confidence Sets for the LATE]{Randomization-based Confidence Sets for the Local Average Treatment Effect}
\author{P.\ M.\ Aronow}
\author{Haoge Chang}
\author{Patrick Lopatto}
\begin{document}

\begin{abstract}
We consider the problem of generating confidence sets in randomized experiments with noncompliance. We show that a refinement of a randomization-based procedure proposed by Imbens and Rosenbaum (2005) has desirable properties. Namely, we show that using a studentized Anderson--Rubin-type statistic as a test statistic yields confidence sets that are finite-sample exact under treatment effect homogeneity, and remain asymptotically valid for the Local Average Treatment Effect when the treatment effect is heterogeneous. We provide a uniform analysis of this procedure and efficient algorithms to construct the confidence set.
\end{abstract}
\maketitle

\section{Introduction}

In randomized experiments with noncompliance, the use of an instrumental variable (IV) estimator has become standard practice. When the treatment assignment is randomized, the IV estimator estimates the local average treatment effect (LATE), or the causal effect of the treatment on subjects that receive treatment only as a result of 
the randomized assignment 
\citep{LATE1994}. 

The standard approach for generating confidence sets for the LATE relies on asymptotic approximations, namely estimating Wald-type confidence sets using a normal approximation and linearized variance estimates \citep{angrist2009mostly}. However, this approach is known to have poor finite-sample properties \citep{nelson1990,bound1995problems}. In particular, when the randomized 
 assignment 
does not strongly predict treatment take-up, an example of the weak instruments problem, these confidence sets may have poor coverage  
even in relatively large samples. The standard solution in the literature involves test inversion, typically with some variant of the Anderson--Rubin statistic \citep{anderson1949estimation} in the just-identified case. The validity of this procedure is often 
justified under a regression model with additive effects and random errors. Certain optimality properties of this test have been demonstrated in the simultaneous-equation model framework \citep{moreira2009tests}.

When analyzing a randomized experiment, an alternative approach relies on explicitly using randomization as a basis for inference. Notably, \cite{imbens2005robust}  proposed a procedure for generating finite-sample exact confidence sets under the assumption that the causal effects are 
constant and additive across subjects. 
The Imbens--Rosenbaum procedure proceeds by inverting Fisher’s exact test against 
a corresponding scalar-valued causal model.
The primary benefit of this procedure is that it offers strong finite-sample guarantees (under a constant additive-effect assumption), thus avoiding the analytic problems that result from the use of asymptotic approximations. However, it generically offers no guarantees when treatment effects are allowed to be heterogeneous across subjects.

This paper shows that a refinement of the Imbens--Rosenbaum procedure has desirable properties in constructing confidence sets for the LATE. Namely, by choosing as a test statistic a variant of the Anderson--Rubin statistic (the studentized Anderson--Rubin statistic), we can guarantee that the resulting confidence sets are finite-sample valid under the constant additive-effect assumption, and that they remain asymptotically valid for the LATE under treatment effect heterogeneity. Although this test statistic is common (see, e.g., \cite{kang2018inference} for its use in randomization inference), the fact that it provides such a guarantee for the Imbens--Rosenbaum procedure appears to be novel in the literature. Furthermore, we show that this asymptotic guarantee is uniform with respect to instrument strength under mild conditions. In experiments with noncompliance, we can allow the instrument to be arbitrarily weak.
In summary, our refinement to the Imbens--Rosenbaum procedure is finite-sample exact under the constant-effect assumption, and can be asymptotically valid for the LATE without assuming either a strong instrument or effect homogeneity. In particular, it is finite-sample exact under the sharp null of no causal effect for each subject.

We also extend our analysis to the regression-adjusted version of the procedure and introduce new exact algorithms for computing (the Monte Carlo versions of) the confidence sets. We note that these algorithms are useful beyond the randomization inference setting and can be applied with other resampling methods, such as the bootstrap, for studentized Anderson--Rubin statistics.

\subsection{Related Work}

Randomization inference for the LATE was studied in \cite{imbens2005robust}, \cite{kang2018inference}, \cite{keele2017randomization}, and \cite{nolen2011randomization}. \cite{kang2018inference} studies the studentized Anderson--Rubin statistic in the design-based framework and provides statistical and computational guarantees based on an asymptotic normal approximation; similar results were also demonstrated in \cite{li2017general}. In contrast, our procedure uses randomization-based critical values and offers a finite-sample guarantee.



Heterogeneity-robust randomization inference was studied in \cite{chung2013exact, wu2021randomization, cohen2022gaussian,zhao2021covariate, tuvaandorj2024robust}.
Wald-type inference on the LATE in randomized experiments with covariate adjustments was considered in, for example, \cite{ansel2018ols,zhong2023inference,bai2023inference,bugni2023inference,ren2024model}.

We refer readers to \cite{stock2002survey}, \cite{andrews2019weak}, and \cite{keane2021practical} for surveys of the weak instruments literature. 
A standard analytical framework in this literature uses a constant-effect linear instrumental variable model \citep{staigerstock97,moreira2003conditional,mikusheva2010robust,tuvaandorj2024robust}. Instead, we adopt the potential outcomes framework and use randomization as the sole basis for inference.
We provide a uniform analysis of our proposed procedures in the spirit of \cite{mikusheva2010robust} and \cite{andrews2011generic}. 

Computational methods for the Anderson--Rubin statistic were considered in \cite{dufour2005projection}, \cite{mikusheva2006tests}, \cite{mikusheva2010robust}, \cite{kang2018inference}, and \cite{li2017general}.
These works use fixed (usually normality-based) critical values, in contrast to our randomization-based critical values.

Computational methods for randomization inference for the LATE were studied in \cite{kang2018inference}, \cite{keele2017randomization}, and \cite{nolen2011randomization}. \cite{kang2018inference} considers confidence interval construction based on the normal approximation, \cite{keele2017randomization} considers the case of binary outcome variables, and \cite{nolen2011randomization} considers the problem of confidence-interval constructions under the constant-additive-effect assumption.   

\subsection{Organization}
Section \ref{section:basic_setup} outlines the basic setup. Section \ref{section:cs_without_ra} and Section \ref{section:cs_with_ra} provide results for the cases without and with covariate adjustment, respectively. Section \ref{section:implementation} provides efficient algorithms for computing the confidence sets.  
Proofs of the theorems in the main text are deferred to the appendices. 

\subsection{Acknowledgments}

P.\ L.\ was supported by NSF postdoctoral fellowship DMS-2202891.

\section{Basic Setup}\label{section:basic_setup}

\subsection{Notation and Identification Assumptions}
We consider an experiment with $n$ units, where each unit is randomized to treatment or control. Let $y_i(d,z)$ denote the potential outcome of unit $i$ under the potential compliance decision $d\in \{0,1\}$ and the treatment assignment $z\in \{0,1\}$. Let $d_i(z)$ denote the compliance decision of the $i$th unit when under treatment status $z$. We write $d_i(1) = 0$ if the $i$th unit is assigned to treatment but does not receive treatment; we write $d_i(1)=1$ if it does. The compliance decision $d_i(0)$ under assignment to control is defined analogously. 

As in \cite{LATE1994}, we assume outcomes are independent of treatment assignments after conditioning on compliance. In other words, potential outcomes only depend on realized treatment status. Let $[n]=\{1,\dots,n\}$.

\begin{assumption}[Exclusion Restriction]\label{a:exclusion}
For all  $i \in [n] $, $y_i(d,z)$ is independent of $z$.\footnote{That is, $y_i(d,1)=y_i(d,0)$ for all $i\in [n]$.} 
\end{assumption}
Given this assumption, we shall write $y_i(d)$ from now on to denote the potential outcome under treatment status $d$.

We further assume, as in \cite{LATE1994}, that $d_i(1) \ge d_i(0)$ (monotonicity), meaning that assignment to treatment does not cause any unit to decline treatment if that unit would have taken the treatment under assignment to control.

\begin{assumption}[Monotonicity]\label{a:monotonicity}
For all  $i \in [ n ] $, we have  $d_i(1) \ge d_i(0)$.
\end{assumption}

Let $\mathcal C$ denote the set of compilers, 
\begin{equation}\label{def:compliers}
    \mathcal{C}=\{i\in[n]: d_i(0)=0, d_i(1)=1\}.
\end{equation}
Our inferential target is the LATE (or Complier Average Treatment Effect) in the experimental sample, defined by 
\begin{equation}
\cace = \frac{1}{|\mathcal C|} \sum_{i \in \mathcal C} \big( y_i(1)  - y_i(0) \big).    
\end{equation}

We shall assume throughout the paper that there is at least one complier ($|\mathcal{C}|\geq 1$) so that the quantity is well-defined. It is shown in \cite{LATE1994} that the LATE is identified under these two assumptions, and is estimable by the ratio of outcome differences between treated and control groups to the compliance rate.

\subsection{Experimental Design and Data Environment}
We assume that the experiments under consideration are completely randomized: among $n$ units, researchers assign $n_1$  units to treatment, chosen uniformly at random. This assumption is not crucial, and our results can be readily generalized to other experimental designs such as  clustered randomization and pair randomization. For each unit $i$, we let $Z_i$ be a random variable such that $Z_i=1$ if unit $i$ is assigned to treatment, and $Z_i=0$ otherwise. We note that $\P(Z_i =1 ) = n_1/n$. 

For each unit $i$, researchers observe compliance decisions and outcomes given by $D_i = d_i(Z_i)$ and  $Y_i = y_i(D_i)$, respectively.  
Each unit $i$ may additionally be associated with some covariate information (e.g., demographic information); if so, this information is collected in a column vector $x_i\in\mathbb{R}^k$. Throughout the paper we assume that $k$ does not change with the sample size $n$. The observed dataset hence consists of a set of outcome--compliance--assignment triples $\{(Y_i,D_i,Z_i)\}_{i=1}^n$, and possibly additional covariates $\{x_i\}_{i=1}^n$.

\subsection{Statistical Framework} 
We adopt the finite-population (design-based) framework, treating the potential outcomes and compliance decisions 
as fixed parameters \citep{imbens2015causal}. 
The sole source of randomness in our model is the vector of random treatment assignments, and statistical uncertainties are evaluated exclusively with respect to this randomness.

\section{Confidence Sets Without Regression Adjustment}\label{section:cs_without_ra}

We begin by describing randomization-based confidence sets for the LATE that do not incorporate covariate information.  We consider a completely randomized experiment where $n_1$ out of $n$ subjects are assigned to treatment. We set $\pi=n_1/n$.
\subsection{Studentized Estimator}

For all $\beta \in \R$, we define the estimator
\be\label{introAR}
\hat\tau(\beta) = \frac{1}{n} \sum_{i=1}^n (Y_i - \beta D_i) ( Z_i - \pi ).
\ee

This statistic is a single-instrument 
version of the standard Anderson–Rubin statistic (see, e.g., \cite{andrews2019weak}).\footnote{Our definition of the Anderson--Rubin statistic is a centered version of the statistic  defined in \cite{imbens2005robust}.}  
This statistic is studied in \cite{imbens2005robust} in the finite-population framework. They show that, under a constant-additive-effect assumption\footnote{By a constant-additive-effect assumption, we mean that there exists some $\beta$ such that $y_i(1)  - y_i(0) = \beta$ for all units. 
 
 }  and when $\beta$ is the true treatment effect, $Y_i - \beta D_i$ recovers the control outcomes $y_i(0)$ and is independent of the treatment assignments. This observation leads to the use of statistic (\ref{introAR}) to construct randomization-based, finite-sample exact confidence sets under the constant-additive-effect assumption. We extend these results by showing that, with proper studentization, this procedure has an  
asymptotically uniform guarantee even when the treatment effects are heterogeneous. 
 
We remark that a short calculation shows that
\be\label{eq:dim_numerator}
\hat\tau(\beta) = \pi ( 1- \pi)
\left(
\frac{1}{n_1} \sum_{i=1}^n Z_i\left(Y_i-\beta D_i\right)   - \frac{1}{n_0}\sum_{i=1}^n (1 - Z_i)\left(Y_i-\beta D_i\right) 
\right),
\ee
where $n_0 = n - n_1$, so $\hat\tau(\beta)$ is simply a rescaled difference in means statistic.  
We further define
\begin{align}
\hat v_{i}(1,\beta)&= Y_i-\beta D_i - \frac{1}{n_1}\sum_{i\in [n]}Z_i\big(Y_i-\beta D_i\big),\\
\hat v_{i}(0,\beta)&= Y_i-\beta D_i - \frac{1}{n_0}\sum_{i\in [n]}(1-Z_i)\big(Y_i-\beta D_i\big).
\end{align}
Using these notations, we define the variance estimator 
\be\label{eqn:AR_var}
\hat{\sigma}^2(\beta)= \frac{\pi^2(1-\pi)^2}{n_1^2}\sum_{i\in [n]} Z_i \hat v^2_{i}(1,\beta)  +      \frac{\pi^2(1-\pi)^2}{n_0^2}\sum_{i\in [n]} (1-Z_i)\hat v^2_{i}(0,\beta), 
\ee
and the studentized statistic
\be\label{eqn:AR_obs}
 \Delta(\beta) = \frac{ \hat \tau(\beta) }{\hat \sigma(\beta)}.
\ee
We will use the absolute value 
$|\Delta(\beta)|$ as our test statistic and construct confidence sets by test inversion. We refer to the test statistic $|\Delta(\beta)|$ in (\ref{eqn:AR_obs}) as the \textit{AR statistic}. 

\subsection{Critical Values and Confidence Sets} 
To choose critical values for $|\Delta(\beta)|$, we use its randomization distribution. Note that the assignment vector $Z= (Z_1, \dots, Z_n)$ is a random vector in $\{0,1\}^n$. Let $\bs Z^*$ be a random vector with the same distribution as $\bs Z$, and is independent from $\bs Z$. We use $\P^*$ to denote the probability measure of $Z^*$  that conditions on $\bs Z$. 

We define $\hat \tau^* \colon \R \rightarrow \R$ by
\be
\tau^*(\beta) = \frac{1}{n} \sum_{i=1}^n (Y_i - \beta D_i) \big(  Z_i^* - \pi \big),
\ee
where we recall that by definition $Y_i$ and $D_i$ depend only on $Z_i$, and not $Z_i^*$.
We also define the variance estimator 
\begin{equation}\label{eqn:AR_var_rand}
    \begin{split}
  \hat \sigma^{*2}(\beta) =&  \frac{\pi^2(1-\pi)^2}{n_1^2}\sum_{i\in [n]}Z^*_i \left(Y_i - \beta D_i -  \frac{1}{n_1}\sum_{i\in [n]}Z^*_i (Y_i - \beta D_i)\right)^2\\
 &+ \frac{\pi^2(1-\pi)^2}{n_0^2}\sum_{i\in [n]}\left(1-Z^*_i\right) \left(Y_i - \beta D_i -  \frac{1}{n_0}\sum_{i\in [n]}\left(1-Z^*_i\right) (Y_i - \beta D_i) \right)^2,
\end{split}
\end{equation}
and the studentized statistic $\Delta^*(\beta)$ by
\be\label{eqn:AR_rand}
\Delta^*(\beta)=\frac{\widehat{\tau}^*(\beta)}{\widehat{\sigma}^*(\beta)}.
\ee

The distribution of $|\Delta^*(\beta)|$ under $\P^*$ is discrete and supported on $[0, \infty]$. For every $\alpha \in (0, 1)$, there exists a smallest (possibly infinite) value $\eta^*_{1-\alpha} (\beta, Z)$ such that 
\be\label{eqn:rand_quantile}
\P^*\left( \big|\Delta^*(\beta)\big| \le \eta^*_{1-\alpha} (\beta, Z) \right) \ge 1 - \alpha.
\ee
Our confidence set is defined as 
\be\label{eqn:inversion_dim}
 I_{1-\alpha}^*(Z) = \big\{ \beta \in \R :  |\Delta(\beta)| \le \eta^*_{1-\alpha} (\beta, Z)  \big\}.
\ee
\begin{remark}
  The Wald estimator, 
\begin{equation}
    \widehat{\beta}_{\textnormal{Wald}} = \frac{ \frac{1}{n_1}\sum_{i\in [n]}Z_iY_i-\frac{1}{n_1}\sum_{i\in [n]}\left(1-Z_i\right)Y_i }{ \frac{1}{n_1}\sum_{i\in [n]}Z_iD_i-\frac{1}{n_0}\sum_{i\in [n]}\left(1-Z_i\right)D_i },
\end{equation}
always lies within the set $I_{1-\alpha}^*(Z)$ (because $\Delta(  \widehat{\beta}_{\textnormal{Wald}})=0$), provided it is well defined. We also note that $I_{1-\alpha}^*(Z)$ may not necessarily form a single interval but is generally a finite union of intervals when it is nonempty. 
\end{remark}

\subsection{Theoretical Guarantees} As noted in \cite{imbens2005robust}, the confidence set (\ref{eqn:inversion_dim}) is a finite-sample valid $(1-\alpha)\times 100\%$ confidence set under a constant-additive-effect assumption.
In practice, assuming a constant effect may be restrictive. We show below that the confidence set $I^*_{1-\alpha}(Z)$ is valid asymptotically, even when the treatment effects are heterogeneous.
Following \cite{mikusheva2010robust} and \cite{andrews2011generic}, we provide an asymptotic uniform-coverage guarantee of the confidence set under the following assumptions.

Recall the definition of complier group $\mathcal{C}$ in (\ref{def:compliers}).

\begin{assumption}[Parameter Space]\label{d:theta}
Let $n\in \mathbb{N}$ and constants $A, \delta>0$ be given.
We denote
\begin{equation}
   \bar{\beta}=\frac{1}{|\mathcal{C}|}\sum_{i\in \mathcal{C}}\left(y_i(1)-y_i(0)\right),
\end{equation}
and set
\begin{equation}
    w_i(1)=y_i(d_i(1))-\bar{\beta} d_i(1), \qquad w_i(0)=y_i(d_i(0))-\bar{\beta} d_i(0).
\end{equation}
The following conditions on the potential outcomes and compliance decisions hold. 



\begin{enumerate}[label=(\roman*)]
\item

We have
\be\label{ii}
{\sigma_{\bs w(1), \bs w(0), \mathcal C}} \geq - \delta\cdot {\sigma_{\bs w (1),\mathcal C} \cdot \sigma_{\bs w(0),\mathcal C}} ,
\ee
where $\sigma_{\bs w(1), \bs w(0),\mathcal{C}}$ is the complier-group correlation of adjusted potential outcomes, 
\begin{equation}
    \sigma_{w(1),w(0),\mathcal{C}}=\frac{1}{|\mathcal{C}|-1}\sum_{i\in \mathcal{C}}\left(\left(w_i(1)-\mu_{w(1),\mathcal{C}}\right)\left(w_i(0)-\mu_{w(0),\mathcal{C}}\right)\right),
\end{equation}
where  $\mu_{w(a),\mathcal{C}}=\frac{1}{|\mathcal{C}|}\sum_{i\in\mathcal{C}}w_i(a)$ for $a\in\{0,1\}$,  and $\sigma^2_{\bs w(1),\mathcal{C}}$ and $\sigma^2_{\bs w(0),\mathcal{C}}$ are the complier-group adjusted potential outcome variances,
\begin{equation}
  \sigma^2_{w(a),\mathcal{C}}=\frac{1}{|\mathcal{C}|-1}\sum_{i\in\mathcal{C}}\left(w_i(a)-\mu_{w(a),\mathcal{C}}\right)^2,
\end{equation}
for $a\in\{0,1\}$. If $|\mathcal{C}|=1$, all variances and covariances are defined to be zero.
\item For each $a \in \{ 0, 1 \}$, 
\be \label{fourthmoment}
\left(\frac{1}{n}\sum_{i=1}^n |w_i(a)- \mu_{\bs w(a)}|^{4}\right)^{1/4}\leq  A  \sigma_{\bs w(a)},
\ee
where $\mu_{w(a)}=\frac{1}{n}\sum_{i=1}^n w_i(a)$ and $\sigma_{w(a)}^2=\frac{1}{n-1}\sum_{i=1}^n\left(w_i(a)-\mu_{w(a)}\right)^2$.
\end{enumerate}
\end{assumption}
\begin{remark}
Condition \eqref{ii} rules out extreme correlations between the potential treatment and control outcomes of the compliers, a common assumption in the finite-population setting. 
Condition \eqref{fourthmoment} is a standard technical condition that arises when applying the central limit theorem for triangular arrays, and may be thought of as excluding heavy-tailed or sparse data. When the potential outcomes  obey the constant-effect assumption (i.e.,  $y_i(1)=y_i(0)+\beta$ for all unit $i$), this assumption boils down to an assumption on the control outcomes: 
\begin{equation}
    \left(\frac{1}{n}\sum_{i=1}^n |y_i(0)-\mu_{y(0)}|^4\right)^{\frac{1}{4}}\leq A\sigma_{y(0)}.
\end{equation} 
\end{remark}
\begin{assumption}[Completely Randomized Experiment]\label{a:cr}    Given $n\in \mathbb{N}$ and a constant $0<r\leq1/2$, we have $n_1\in [rn,(1-r)n]$.
\end{assumption}

We denote the parameter space of potential outcomes, compliance decisions, and experimental designs satisfying Assumptions 1 through 4 with $n$ units and constants $\delta$, $r$, and $A$ by 
\begin{equation}\label{eqn:parameter space}
 \Theta_n(\delta,r,A)=\Big\{\Big( 
 \big\{\big(y_i(1),y_i(0),d_i(1),d_i(0)\big) \big\}_{i=1}^n, \Pi_n 
 \Big)\Big\}  ,
\end{equation}
where $\{(y_i(1),y_i(0),d_i(1),d_i(0))\}_{i=1}^n$ satisfies Assumption \ref{d:theta} with constants $\delta$ and $A$, and
$\Pi_n$ is a distribution of random assignment variables $\{Z_i\}_{i=1}^n$ implementing complete randomization that satisfies Assumption \ref{a:cr} with constant $r$. 
Each combination of potential outcomes, compliance decisions, and an experimental design completely determines the 
distribution of our test statistic.
We prove an asymptotic coverage guarantee uniformly over sequences in $\Theta_n(\delta,r,A)$.
\begin{theorem}\label{t:main}
Fix $\alpha, \delta \in (0,1)$, $r \in (0,1/2]$, and $A > 0$. 
Let $\Theta_n(\delta,r,A)$ be the parameter space of models that satisfy Assumptions \ref{a:exclusion}, \ref{a:monotonicity}, \ref{d:theta}, and \ref{a:cr} with these constants. 
We have
\begin{align}
   \lim_{n\rightarrow \infty}   
   \inf_{\beta \in \R} 
   \inf_{ \{\theta \in \Theta_n(\delta,r,A)  : \cace=\beta\}} \P_{\theta}\big (\beta  \in I^*_{1-\alpha}(Z) \big) \geq 1-\alpha . 
\end{align}
\end{theorem}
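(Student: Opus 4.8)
The plan is to evaluate the procedure at the true parameter $\beta=\cace=\bar\beta$ and reduce the claim to two central limit theorems — one for the observed studentized statistic $\Delta(\bar\beta)$ and one for its randomization reference distribution $\Delta^*(\bar\beta)$ — which are then combined by a Slutsky-type argument. Uniformity is obtained at the end by the standard subsequence device of \cite{andrews2011generic} and \cite{mikusheva2010robust}: if the conclusion failed there would be a sequence of models along which coverage stays bounded below $1-\alpha$, and one derives a contradiction by passing to a further subsequence along which all normalized parameters converge.

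The reduction rests on two structural facts. By the exclusion restriction and the definitions of $w_i(1),w_i(0)$ one has the identity $Y_i-\bar\beta D_i = Z_i\,w_i(1)+(1-Z_i)\,w_i(0)$, so $\hat\tau(\bar\beta)$ is, up to the factor $\pi(1-\pi)$, a Neyman difference-in-means statistic in the ``potential outcomes'' $w_i(1),w_i(0)$, and $\hat\sigma^2(\bar\beta)$ is its conservative Neyman variance estimator. First, by monotonicity $w_i(1)-w_i(0)=0$ for every non-complier and $=(y_i(1)-y_i(0))-\bar\beta$ for every complier, so $\sum_i(w_i(1)-w_i(0))=0$ and hence $\E_\theta[\hat\tau(\bar\beta)]=0$; in particular $\mu_{w(1)}=\mu_{w(0)}$. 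Second, the classical Neyman identity shows that $\E_\theta[\hat\sigma^2(\bar\beta)]-\Var_\theta(\hat\tau(\bar\beta))$ equals a positive multiple of $\tfrac1n\sum_{i\in\mathcal C}\big((y_i(1)-y_i(0))-\bar\beta\big)^2\ge 0$, the dispersion of complier-level effects, so that $V_n:=\Var_\theta(\hat\tau(\bar\beta))/\E_\theta[\hat\sigma^2(\bar\beta)]$ always lies in $[0,1]$ — and tends to $1$ when the complier effects are asymptotically homogeneous or $|\mathcal C|=o(n)$ (using \eqref{fourthmoment} to bound the relevant moments). Since $\Delta$ is scale-invariant one may normalize $\sigma^2_{\bs w(1)}+\sigma^2_{\bs w(0)}=1$ (the fully degenerate case $w_i(1)\equiv w_i(0)\equiv\textnormal{const}$, where $\Delta(\bar\beta)$ is $0/0$, is handled by convention and gives trivial coverage); under this normalization Assumption~\ref{a:cr} forces $n\,\E_\theta[\hat\sigma^2(\bar\beta)]$ to be bounded below by a positive constant depending only on $r$.

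For the observed statistic I would apply a finite-population CLT for difference-in-means estimators in the spirit of Li--Ding: condition \eqref{fourthmoment} supplies the max-ratio/Lindeberg requirement, Assumption~\ref{a:cr} keeps the design balanced, and \eqref{ii} keeps the complier-group correlation — and hence the limiting variance — away from its degenerate boundary; together with the law of large numbers $\hat\sigma^2(\bar\beta)/\E_\theta[\hat\sigma^2(\bar\beta)]\to 1$ this gives $\Delta(\bar\beta)\Rightarrow N(0,c^2)$ with $c^2=\lim V_n\in[0,1]$. When instead the normalized variance of $\hat\tau(\bar\beta)$ itself vanishes, $\Delta(\bar\beta)\to 0$ in probability directly, because its normalized denominator stays bounded below — this is the mechanism that tolerates arbitrarily weak instruments. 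For the reference distribution I would condition on $Z$: given $Z$, $\hat\tau^*(\bar\beta)$ is a difference-in-means of the \emph{fixed} numbers $u_i=Z_i w_i(1)+(1-Z_i)w_i(0)$ under a fresh complete randomization of $Z^*$, so the ``individual effects'' in the reference experiment are identically zero and $\hat\sigma^{*2}(\bar\beta)$ is an asymptotically \emph{exact}, not merely conservative, estimator of $\Var^*(\hat\tau^*(\bar\beta))$. A concentration argument over $Z$ shows the empirical moments of $\{u_i\}$ are well-behaved outside an event of $\P_\theta$-probability $o(1)$; on that event the conditional CLT gives $\Delta^*(\bar\beta)\Rightarrow N(0,1)$ under $\P^*$, and by Pólya's theorem the critical value satisfies $\eta^*_{1-\alpha}(\bar\beta,Z)=|z_{1-\alpha}|+o_{\P_\theta}(1)$, where $|z_{1-\alpha}|$ denotes the $(1-\alpha)$-quantile of $|N(0,1)|$.

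Combining, write $\P_\theta(\bar\beta\in I^*_{1-\alpha}(Z))=\P_\theta\big(|\Delta(\bar\beta)|\le\eta^*_{1-\alpha}(\bar\beta,Z)\big)$ and substitute $\eta^*_{1-\alpha}(\bar\beta,Z)=|z_{1-\alpha}|+o_{\P_\theta}(1)$ together with $|\Delta(\bar\beta)|\Rightarrow|N(0,c^2)|$ — a law with no atom at $|z_{1-\alpha}|$ when $c>0$, and the claim being trivial when $c=0$ — to conclude the limit is $\P\big(|N(0,c^2)|\le|z_{1-\alpha}|\big)\ge\P\big(|N(0,1)|\le|z_{1-\alpha}|\big)=1-\alpha$. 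To upgrade to uniformity, assume the infimum in the statement does not satisfy $\liminf\ge1-\alpha$; then there are $\eps_0>0$, a subsequence, and models $\theta_n\in\Theta_n(\delta,r,A)$ with $\cace(\theta_n)=\beta_n$ (chosen to nearly attain the infimum) along which $\P_{\theta_n}(\beta_n\in I^*_{1-\alpha}(Z))\le 1-\alpha-\eps_0$; passing to a further subsequence along which $n_1/n$, the normalized variance ratios, and all normalized empirical moments converge (compactness coming from \eqref{ii} and Assumption~\ref{a:cr}), the two CLTs apply with the limiting parameters and force the coverage to converge to some $\P(|N(0,c_\infty^2)|\le|z_{1-\alpha}|)\ge1-\alpha$, a contradiction. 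The most delicate step is the reference leg in the weak-instrument regime: establishing, uniformly over $\Theta_n(\delta,r,A)$ — which imposes no lower bound on $|\mathcal C|/n$ — that $\eta^*_{1-\alpha}(\bar\beta,Z)$ concentrates at $|z_{1-\alpha}|$. This needs a conditional CLT for $\hat\tau^*(\bar\beta)$ valid uniformly in the realized configuration $\{u_i\}$, control of the rare events on which $\hat\sigma^{*2}(\bar\beta)=0$ (so $|\Delta^*(\bar\beta)|=\infty$) so that they do not perturb the $(1-\alpha)$-quantile, and matching this against the possibly-degenerating observed statistic; carrying the normalizations consistently through the subsequence argument is where most of the bookkeeping lies.
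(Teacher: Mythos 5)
Your proposal is correct and follows essentially the same route as the paper: reduce to the true LATE where $\hat\tau(\bar\beta)$ is an exactly centered Neyman difference in means in $w_i(1),w_i(0)$, show the studentized observed statistic is asymptotically dominated by $|N(0,1)|$ because the variance estimator over-covers by the complier effect-dispersion term $S^2_{10}/n$ (the paper's Lemmas \ref{l:imbensrubin}, \ref{lemma:variancelowerbound1}--\ref{lemma:varianceconsistency} and Theorem \ref{thm:clt2}), show the randomization critical value converges to $z_{1-\alpha/2}$ because the reference experiment has no individual effects so its variance estimator is asymptotically exact (Lemmas \ref{lemma:lyapunov_randomization}--\ref{lemma:cv}), and glue these with a subsequence/contradiction argument for uniformity. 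The structural facts you isolate — mean-zero at $\bar\beta$, conservativeness of $\hat\sigma^2(\bar\beta)$, exactness of $\hat\sigma^{*2}(\bar\beta)$, the role of \eqref{ii} in bounding the variance ratio and of \eqref{fourthmoment} in the Lindeberg condition — are exactly the ones the paper's proof runs on.
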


\section{Confidence Sets With Regression Adjustment}\label{section:cs_with_ra}

\subsection{Notation and Assumptions}

We now show how to modify the construction in the previous section to incorporate pre-treatment covariates via regression adjustment, a common method for improving the efficiency of inferential procedures.
We retain the previous notations and additionally suppose that each unit $i$ comes equipped with a column vector $x_i \in \R^k$ of covariates for some $k \in \mathbb{N}$. We suppose that each $x_i$ is demeaned, meaning that 
\be\label{demean}
\sum_{i=1}^n x_i = 0_k,
\ee
where $0_k$ is the zero vector in $\mathbb R^k$. 
We make this assumption without loss of generality, since given an arbitrary set of covariate vectors, one can enforce \eqref{demean} by subtracting the mean of the vectors from each $x_i$. We denote the $s$th element of $x_i$ by $x_{si}$. 

Given a $\beta \in \R$ and a treatment assignment vector $Z$, consider the linear regression 
\be\label{olsme}
Y_i-\beta D_i \sim (1-Z_i)\phi_0 + Z_i\phi_1 +  (1-Z_i)x_i'\gamma_0 + Z_i x_i'\gamma_1.
\ee 
This is an interacted linear regression that regresses $Y_i-\beta D_i$ on 
\begin{equation}
    \big((1-Z_i),Z_i,(1-Z_i)x_i,Z_ix_i\big)
\end{equation}
without an intercept. For each $\beta$, denote the ordinary least squares (OLS) estimators resulting from \eqref{olsme} by $\hat \phi(0,\beta)$, $\hat \phi(1,\beta)$, $\hat \gamma(0,\beta)$, and $\hat \gamma(1,\beta)$, corresponding to  the order the associated coefficients appear in \eqref{olsme}. We define the randomization statistics $\hat \phi^* (0, \beta)$,  $\hat \phi^* (1, \beta)$, $\hat \gamma^*(0,\beta)$, and $\hat \gamma^*(1,\beta)$ similarly by using $Z^*$ instead of  $Z$.\footnote{If the design matrix is not invertible, which will happen with probability approaching 0 under our assumptions,  
one can use the least-square coefficients with the pseudo-inverse matrix instead. }

\subsection{Studentized Estimators} 

We define the covariate-adjusted difference in means statistic as
\begin{align}\label{formula:adj_num}
\begin{split}
     \hat \tau_\reg(\beta) = & \frac{1}{n_1}\sum_{i\in [n]}Z_i\big(Y_i-\beta D_i  -  x_i'\hat \gamma(1, \beta)\big)  \\  & - \frac{1}{n_0}\sum_{i\in [n]}(1-Z_i)\big(Y_i-\beta D_i  - x_i'\hat\gamma(0, \beta)\big)
\end{split}
\end{align}
and we define the covariate-adjusted randomization estimator $   \hat \tau^*_\reg(\beta)$ analogously, again replacing $Z_i$ by $Z_i^*$,  and  $\hat{\gamma}(0,\beta)$ and  $\hat{\gamma}(1,\beta)$ by their randomization analogues. 
The addition of the regression terms aims to improve the precision of $\hat \tau_\reg(\beta)$ relative to $\hat \tau(\beta)$ and consequently decrease the size of the resulting confidence sets \citep{zhao2021covariate}.\footnote{
We note that the infeasible covariate-adjusted estimator, where $\hat\gamma(1,\beta)$ and $\hat \gamma(0,\beta)$ are replaced by their infeasible counterparts defined in (\ref{covreg}), has mean $0$ when $\beta$ is equal to the true LATE by a straightforward calculation.}

The residual estimators associated with \eqref{olsme} and their randomization-based counterparts are 
\begin{align*}
    \hat \epsilon_i (\beta) &=Y_i - \beta D_i - \hat \phi (Z_i, \beta)  - x_i'\hat {\gamma}(Z_i, \beta) ,\\ 
 \hat \epsilon^*_i ( \beta)& =Y_i - \beta D_i - \hat \phi^* (Z_i^*, \beta) - x_i'\hat {\gamma}^*(Z_i^*, \beta).
\end{align*}
We use these to define the variance estimators
\begin{align*}
\hat{\sigma}^2_\reg (\beta) &= \frac{1}{n_1^2} \sum_{i\in [n]}Z_i \hat{\epsilon}_i(\beta)^2 +  \frac{1}{n_0^2} \sum_{i\in [n]}\left(1-Z_i\right) \hat{\epsilon}_i(\beta)^2, \\
     \hat{\sigma}^2_{\reg,*}(\beta) &= \frac{1}{n_1^2} \sum_{i\in [n]}Z_i^* \hat{\epsilon}^*_i(\beta)^2 +  \frac{1}{n_0^2} \sum_{i\in [n]}\left(1-Z_i^*\right) \hat{\epsilon}^*_i(\beta)^2.
\end{align*}
The adjusted studentized difference-in-means estimators are 
\be\label{adjdelta}
\Delta_{\reg}(\beta) = \frac{  \hat \tau_\reg(\beta) }{\hat{\sigma}_{\reg}(\beta) }, \qquad  \Delta^*_\reg(\beta) = \frac{  \hat \tau^*_\reg(\beta) }{\hat{\sigma}_{\reg,*} (\beta)}.
\ee

\subsection{Critical Values} 
Given the definitions in \eqref{adjdelta}, our confidence set  construction is similar to the previous case. Given $\alpha \in (0, 1)$, there exists a smallest (possibly infinite) value $\eta^{\reg,*}_{1-\alpha} (\beta, Z)$ 
such that 
\[
\P^*\left( \big|\Delta_\reg^*(\beta)\big| \le \eta^{\reg,*}_{1-\alpha} (\beta, Z) \right) \ge 1 - \alpha.
\]
Our proposed confidence set is
\be\label{eqn:inversion_adj}
 I^{\reg,*}_{1-\alpha}(Z) = \big\{ \beta \in \R :  |\Delta_{\reg}(\beta)| \le \eta^{\reg,*}_{1-\alpha} (\beta, Z)  \big\}.
\ee
The implementation is discussed in Section \ref{section:implementation}.

\begin{remark}\label{remark2}
We note that $I_{1-\alpha}^{\reg,*}(Z)$ may not necessarily form a single interval but is generally a finite union of intervals when it is nonempty. The regression-adjusted Wald estimator, defined as 
\begin{equation}
\widehat{\beta}^{\reg}_{\textnormal{Wald}}=\frac{ \frac{1}{n_1}\sum_{i\in [n]}Z_i\left(Y_i-x_i'\widehat{\gamma}_{y,1}\right) -\frac{1}{n_0}\sum_{i\in [n]}\left(1-Z_i\right)\left(Y_i-x_i'\widehat{\gamma}_{y,0}\right) }{ \frac{1}{n_1}\sum_{i\in [n]}Z_i\left(Y_i-x_i'\widehat{\gamma}_{d,1}\right) -\frac{1}{n_0}\sum_{i\in [n]}\left(1-Z_i\right)\left(Y_i-x_i'\widehat{\gamma}_{d,0}\right)},
\end{equation}
always lies within this set (because $\Delta_{\reg}(  \widehat{\beta}^{\reg}_{\textnormal{Wald}})=0$), provided it is well-defined (i.e., the denominator is not zero). The coefficients $\widehat{\gamma}_{y,1}$ and  $\widehat{\gamma}_{y,0}$ are the OLS estimators obtained from the interacted regression
\begin{equation}\label{beta_wald}
    Y_i \sim \alpha_{y,0} + \alpha_{y,0} Z_i + Z_ix_i'\gamma_{y,1} + \left(1-Z_i\right)x_i'\gamma_{y,0},
\end{equation}
while  $\widehat{\gamma}_{d,1}$ and $\widehat{\gamma}_{d,0}$ are the OLS estimators from the regression
\begin{equation}
    D_i \sim \alpha_{d,0} + \alpha_{d,1} Z_i + Z_ix_i'\gamma_{d,1} + \left(1-Z_i\right)x_i'\gamma_{d,0}.
\end{equation}
The estimator $\widehat{\beta}^{\reg}_{\textnormal{Wald}}$ is also algebraically equivalent to the coefficient for $D_i$ in the instrumental variable regression \begin{equation}
    Y_i\sim \alpha + \alpha_1 D_i + Z_ix_i'\gamma_{1}  + \left(1-Z_i\right)x_i'\gamma_{0},  
\end{equation}
where the instruments are $1, Z_i, Z_ix_i$, and $ \left(1-Z_i\right)x_i$, when both estimators are well-defined (i.e., the denominator is not zero and the design matrix is invertible).  
These algebraic relationships are documented in Appendix \ref{appendix:remark}. 
\end{remark}

\subsection{Theoretical Guarantees} 

A straightforward extension of the reasoning in  \cite{imbens2005robust} shows that the confidence sets $ I^{\reg,*}_{1-\alpha}(Z)$ are valid  if the constant additive effects condition holds, regardless of the value of $n$. We show below that the confidence sets $ I^{\reg,*}_{1-\alpha}(Z)$ are asymptotically valid with effect heterogeneity under mild regularity conditions.

\begin{assumption}\label{d:expwithcovariates}
Let $k,n\in \mathbb{N}$ and constants $\delta, \tilde{\epsilon},A,B>0$ be given. 
We denote
\begin{equation}
   \bar{\beta}=\frac{1}{|\mathcal{C}|}\sum_{i\in \mathcal{C}}\left(y_i(1)-y_i(0)\right).
\end{equation}
For each $a\in\{0,1\}$, we define
\begin{equation}
    \epsilon_{i}(a,\bar{\beta})=y_i(d_i(a))-\bar{\beta}d_i(a)- \phi(a,\bar{\beta})-x_i' \gamma(a,\bar{\beta}),
\end{equation}
where we recall $\phi(a,\bar{\beta})$ and $\gamma(a,\bar{\beta})$ were defined in \eqref{covreg}.
The following conditions on the potential outcomes, compliance decisions, and covariates hold. 
\begin{enumerate}[label=(\roman*)]
\item We have 
\be \label{ii-covariate}
\frac{\sigma_{\epsilon(1, \bar{\beta}) ,\epsilon(0, \bar{\beta})}}{\sigma_{\epsilon(1, \bar{\beta})} \sigma_{\epsilon(0, \bar{\beta})}} \geq -\delta.
\ee

\item
For each $a \in \{0,1\}$,
\be\label{iv-covariate}
\left(\frac{1}{n}\sum_{i\in[n]} \epsilon_i^{4}(a,\bar{\beta})\right)^{1/4}\leq  B\sigma_{\epsilon(a,\bar{\beta})}.
\ee
\item
We have 
\be\label{vi-covariate}
\sum_{i=1}^n x_i = 0_k,
\ee
where $0_k$ denotes the zero vector in $\R^k$. 
\item For all $s \in [ k]$, 
\be\label{vii-covariate}
\frac{1}{n}\sum_{i=1}^n x_{si}^4 \leq A  . 
\ee
\item Let $\lambda_{\min}$ denote the smallest eigenvalue of $n^{-1} \sum_{i=1}^n x_ix_i'$.  Then 
\be\label{viii-covariate}
\lambda_{\min} \ge \tilde \eps. 
\ee 
\end{enumerate}
\end{assumption}
We denote the parameter space of potential outcomes, compliance decisions, covariates, and experimental designs satisfying Assumptions 1, 2, 3, and 5 with $n$ units, covariates of dimension $k$, and constants $\delta$, $\tilde{\epsilon}$,  $A$, $B$, and $r$ by
\begin{equation}\label{eqn:parameter space2}
 \Xi_n(k,\delta,\tilde{\epsilon},A,B,r)=
 \Big\{
 \Big(
 \big\{
\big(y_i(1),y_i(0),d_i(1),d_i(0),x_i\big)
\big\}_{i=1}^n,
\Pi_n
 \Big)
\Big\}   .
\end{equation}
Here $\{(y_i(1),y_i(0),d_i(1),d_i(0), x_i)\}_{i=1}^n$ satisfies Assumption \ref{d:expwithcovariates} with constants $\delta$, $\tilde{\epsilon}$, $A$, and $B$, and 
$\Pi_n$ is a distribution of random assignment variables $\{Z_i\}_{i=1}^n$ implementing complete randomization that satisfies Assumption \ref{a:cr} with constant $r$. 
Each combination of potential outcomes, compliance decisions, covariate vectors, and an experimental design completely determines the 
distribution of our test statistic. 
We prove an asymptotic coverage guarantee uniformly over sequences in $\Xi_n(k,\delta,\tilde{\epsilon},A,B,r)$. 
\begin{theorem}\label{thm:coverage2}
Fix $k\in \mathbb{N}$, $\alpha \in (0,1)$, $r \in (0,1/2]$, and 
$\delta,\tilde{\epsilon},A,B>0$.
Let $\Xi_n$ be the parameter space of models that satisfy Assumptions \ref{a:exclusion}, \ref{a:monotonicity}, \ref{a:cr}, and \ref{d:expwithcovariates} with these  constants. We have
\begin{align}
   \lim_{n\rightarrow \infty}   
   \inf_{\beta \in \R} 
   \inf_{ \{\theta \in \Xi_n : \cace=\beta\}} \P_{\theta}\big (\beta  \in I^{\reg,*}_{1-\alpha}(Z) \big) \geq 1-\alpha . 
\end{align}
\end{theorem}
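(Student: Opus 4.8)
The plan is to follow the strategy used for Theorem~\ref{t:main}, treating the regression adjustment as a lower-order perturbation. Write $\bar\beta_n$ for the LATE of a model $\theta_n \in \Xi_n$, which by hypothesis equals the common value $\bar\beta$ appearing in Assumption~\ref{d:expwithcovariates}. By the standard subsequence device (cf.\ \cite{andrews2011generic}), it suffices to show that there is no subsequence $n_k \to \infty$ and models $\theta_{n_k} \in \Xi_{n_k}$ with $\P_{\theta_{n_k}}\big(\bar\beta_{n_k} \in I^{\reg,*}_{1-\alpha}(Z)\big) \le 1 - \alpha - c_0$ for some fixed $c_0 > 0$. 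Along such a subsequence, Assumptions~\ref{a:cr} and~\ref{d:expwithcovariates} confine $\pi_{n_k} = n_1/n$, the normalized moments of the adjusted potential outcomes and of the residuals $\epsilon_i(a,\bar\beta)$, the eigenvalues of $n^{-1}\sum_{i=1}^n x_i x_i'$, and the correlation in \eqref{ii-covariate} to compact sets, so we may pass to a further subsequence along which all of these converge. Since $\bar\beta_n \in I^{\reg,*}_{1-\alpha}(Z)$ precisely when $|\Delta_\reg(\bar\beta_n)| \le \eta^{\reg,*}_{1-\alpha}(\bar\beta_n, Z)$, the task reduces to identifying the joint asymptotics of $|\Delta_\reg(\bar\beta_n)|$ and of the conditional law of $|\Delta^*_\reg(\bar\beta_n)|$ given $Z$.

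The first step strips out the regression adjustment. Using \eqref{vi-covariate}, the eigenvalue bound \eqref{viii-covariate}, and the fourth-moment bounds \eqref{iv-covariate} and \eqref{vii-covariate}, the finite-population law of large numbers yields $\hat\gamma(a,\bar\beta_n) = \gamma(a,\bar\beta) + O_\P(n^{-1/2})$ and $\hat\phi(a,\bar\beta_n) = \phi(a,\bar\beta) + O_\P(n^{-1/2})$ for $a \in \{0,1\}$, and, conditionally on $Z$ with $\P_{\theta_n}$-probability tending to $1$, the randomization coefficients $\hat\gamma^*(a,\bar\beta_n)$ and $\hat\phi^*(a,\bar\beta_n)$ converge to the OLS coefficients from regressing the observed $Y_i - \bar\beta_n D_i$ on $(1, x_i)$ over the full sample. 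Because the treated and control covariate averages $n_1^{-1}\sum_i Z_i x_i$ and $n_0^{-1}\sum_i (1-Z_i) x_i$ are themselves $O_\P(n^{-1/2})$ (their means vanish by \eqref{vi-covariate}), replacing $\hat\gamma$ by $\gamma(a,\bar\beta)$ perturbs $\hat\tau_\reg(\bar\beta_n)$ by $O_\P(n^{-1})$, negligible against its standard deviation; the same holds for $\hat\sigma^2_\reg$, $\hat\tau^*_\reg$, and $\hat\sigma^2_{\reg,*}$ after the analogous substitutions. After this reduction the adjusted procedure is the unadjusted procedure of Section~\ref{section:cs_without_ra} applied to the residualized ``outcomes'' $\epsilon_i(a,\bar\beta) = w_i(a) - \phi(a,\bar\beta) - x_i'\gamma(a,\bar\beta)$; since conditions (i) and (ii) of Assumption~\ref{d:expwithcovariates} are exactly the analogues of \eqref{ii} and \eqref{fourthmoment} in Assumption~\ref{d:theta} for these residuals, the argument of Theorem~\ref{t:main} applies.

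The analysis then rests on two central limit theorems. For the observed statistic, monotonicity and $\bar\beta_n = \cace$ give $\sum_{i=1}^n\big[(w_i(1) - x_i'\gamma(1,\bar\beta)) - (w_i(0) - x_i'\gamma(0,\bar\beta))\big] = \sum_{i\in\mathcal C}(y_i(1)-y_i(0)-\bar\beta) - \big(\sum_i x_i\big)'\big(\gamma(1,\bar\beta)-\gamma(0,\bar\beta)\big) = 0$ by \eqref{vi-covariate} and the definition of $\bar\beta$, so the infeasible adjusted numerator is exactly mean-zero; a finite-population CLT for triangular arrays, whose Lindeberg condition is supplied by \eqref{iv-covariate} and \eqref{vii-covariate}, then gives $\hat\tau_\reg(\bar\beta_n)/\sigma_n \Rightarrow N(0,1)$ with $\sigma_n^2$ the exact design variance. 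Neyman's variance formula gives $\sigma_n^2 \le \Sigma_n^2 := \sigma^2_{\epsilon(1,\bar\beta)}/n_1 + \sigma^2_{\epsilon(0,\bar\beta)}/n_0$, the deficit being $n^{-1}$ times the finite-population variance of $\epsilon_i(1,\bar\beta) - \epsilon_i(0,\bar\beta)$, and $\hat\sigma^2_\reg(\bar\beta_n) = \Sigma_n^2(1+o_\P(1))$; combining \eqref{ii-covariate} with $\pi_n \in [r, 1-r]$ forces $s^2 := \lim \sigma_n^2/\Sigma_n^2 \in [c_1(r,\delta), 1]$ for a constant $c_1(r,\delta) > 0$, so $|\Delta_\reg(\bar\beta_n)| \Rightarrow |N(0,s^2)|$. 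For the randomization statistic we condition on $Z$: the resampled ``potential outcomes'' of unit $i$ both equal $Y_i - \bar\beta_n D_i$, so the resampled unit-level effects are zero (exactly for the infeasible adjustment, asymptotically for the feasible one since $\hat\phi^*(1,\bar\beta_n)-\hat\phi^*(0,\bar\beta_n)\to 0$ and $\hat\gamma^*(1,\bar\beta_n)-\hat\gamma^*(0,\bar\beta_n)\to 0$); hence the Neyman-type estimator $\hat\sigma^2_{\reg,*}(\bar\beta_n)$ is asymptotically exact, and the same CLT applied to the fixed array $\{Y_i - \bar\beta_n D_i\}_{i=1}^n$ — whose empirical second and fourth moments lie, with $\P_{\theta_n}$-probability $\to 1$, in the range where the bound is uniform — gives $\Delta^*_\reg(\bar\beta_n) \Rightarrow N(0,1)$ in $\P^*$-probability, so that $\eta^{\reg,*}_{1-\alpha}(\bar\beta_n, Z) \to q_{1-\alpha}$ in $\P_{\theta_n}$-probability, where $q_{1-\alpha}$ is the $(1-\alpha)$-quantile of $|N(0,1)|$. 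Therefore $\P_{\theta_n}\big(\bar\beta_n \in I^{\reg,*}_{1-\alpha}(Z)\big) \to \P\big(|N(0,s^2)| \le q_{1-\alpha}\big) \ge \P\big(|N(0,1)| \le q_{1-\alpha}\big) = 1-\alpha$, contradicting the supposed deficiency.

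The main obstacle is the randomization leg. It requires a finite-population Berry--Esseen bound that is uniform over the class of arrays arising here, together with a verification that the \emph{random} observed array $\{Y_i - \bar\beta_n D_i\}$ — a $Z$-selected mixture of the $w_i(0)$'s and $w_i(1)$'s — and the OLS residuals formed from it by regressing on $x_i$ within the random $Z^*$-defined subsamples land, with high probability, in the regime where that bound is effective; this needs quantitative concentration of the empirical second and fourth moments of $w_i(Z_i)$ about their deterministic targets and control of the ``doubly stochastic'' coefficients $\hat\gamma^*(a,\bar\beta_n)$. A secondary difficulty is propagating the feasibility error (estimated versus population $\gamma$) through both $\Delta_\reg$ and $\Delta^*_\reg$ and showing it is $o_\P$ of the respective standard deviations uniformly; this is where the eigenvalue bound \eqref{viii-covariate} and the lower bound $s^2 \ge c_1(r,\delta) > 0$ from \eqref{ii-covariate} are essential, since otherwise the standard deviation could vanish faster than the error terms.
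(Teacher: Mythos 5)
Your proposal follows essentially the same route as the paper's proof: a subsequence/contradiction argument, asymptotic equivalence of the feasible and infeasible covariate-adjusted estimators (Lemma \ref{lemma:asymptoticequi} and Lemma \ref{lemma:cov_adj_clt}), the combinatorial CLT with the Neyman upper bound on the design variance giving a conservative limit $|N(0,s^2)|$ with $s^2\le 1$, and convergence of the randomization critical value to $z_{1-\alpha/2}$ because the resampled ``potential outcomes'' are constant across arms. The only caveat is that your $O_\P(n^{-1/2})$ and $O_\P(n^{-1})$ rates for $\hat\gamma-\gamma$ and the resulting perturbation should carry the scale $\sigma_{\epsilon(a,\bar\beta)}$, but since you correctly compare everything to the standard deviation of $\tilde\tau_\reg$ (which is bounded below by a multiple of $\sigma_{\epsilon(a,\bar\beta)}n^{-1/2}$ via \eqref{ii-covariate}), this is a cosmetic imprecision rather than a gap.
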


\section{Implementation}\label{section:implementation}

To compute the confidence sets in (\ref{eqn:inversion_dim}) and (\ref{eqn:inversion_adj}) algorithmically, one 
seemingly
needs to enumerate over the uncountable set of all possible LATEs (the real line). In practice, one may adopt a naive approach to approximate the confidence sets by selecting a sufficiently fine grid and applying the test inversion procedure to each point on the grid. This section presents new algorithms that improve upon this naive approach. These algorithms can recover (a Monte Carlo version of) the confidence sets \textit{exactly}.

Our algorithms use a characterization of the confidence sets stated below in \Cref{lemma:cs_as_intervals}. The lemma shows that each confidence set is a collection of intervals whose endpoints are the intersections of two polynomial functions. Each polynomial function is associated with a simulated assignment. The algorithm then solves $O(\nmc^2)$ subproblems, where $\nmc$ is the number of simulated assignments used in the randomization tests.\footnote{For AR statistics with fixed (normal) critical values, \cite{dufour2005projection}, \cite{mikusheva2010robust}, \cite{mikusheva2006tests}, and \cite{li2017general} describe all possible
geometric shapes of the inverted confidence set. In our setting, it is clear that the set is not empty when the Wald estimator (i.e., the LATE estimate) is well defined. Beyond that, to our knowledge, there is 
no known simple geometric description of the confidence set constructed using randomization tests. Generically, the inverted set can consist of multiple intervals. 
}
For brevity, we focus our discussion here on the unadjusted AR statistic (\ref{eqn:AR_obs}). The same logic applies to the regression-adjusted AR statistic (\ref{adjdelta}).
Recall the definition of the AR statistic with randomization (\ref{eqn:AR_rand}) and the quantile of its randomization distribution (\ref{eqn:rand_quantile}). 

In practice, an exact calculation of $\eta^*_{1-\alpha}(\beta)$ for a fixed $\beta$ is infeasible even for a dataset of moderate size (for example, an experiment with $n=30$ and $n_1=15$ contains over $1.5\times 10^8$ possible assignments). The critical value $\eta^*_{1-\alpha}(\beta)$ is usually estimated by a Monte Carlo simulation with a suitably large number of independent random assignments to approximate the distribution of $\Delta^*(\beta)$. Let $\nmc$ be the number of simulated assignments, and $\Delta^*_{k}(\beta)$ the AR statistic associated with $k$th assignment. We define
\begin{equation}\label{eqn:rand_quantile_feasible}
\widehat{\eta}^*_{1-\alpha}(\beta,Z) = \min\left\{ \left|\Delta^*_k(\beta)\right|: \frac{1}{\nmc}\sum_{s=1}^{\nmc} \one\{ \left|\Delta^*_s(\beta)\right|\leq \left|\Delta^*_k(\beta)\right| \}\geq 1-\alpha\right\},
\end{equation}
and the confidence set of interest is defined as
\begin{equation}\label{eqn:perm_cs}
    \hat I^*_{1-\alpha}(Z) = \big\{ \beta \in \R :  |\Delta(\beta)| \le \hat{\eta}^*_{1-\alpha} (\beta, Z)  \big\}.
\end{equation}
\subsection{AR functions}
To numerically construct the set \eqref{eqn:perm_cs}, it is instructive to look at the statistics $\Delta^2(\beta)$. We note that each AR statistic should be considered as a function of $\beta$. Conceptually, we are working with random functions of $\beta$ defined over the real line. 
We therefore refer to the AR statistics as the AR functions below. For the observed AR function, we have
\be\label{eqn:AR_sq_ratio}
    \Delta^2(\beta) = \frac{a \beta^2 + b \beta +c }{d \beta^2 + e\beta + f},
\ee
where
\newpage 
\begin{align}
     a=& \left( \frac{1}{n}\sum_{i=1}^n D_i (Z_i-\pi )  \right)^2,\\
     b=& -2 \left(\frac{1}{n}\sum_{i=1}^n Y_i (Z_i-\pi )\right)\left( \frac{1}{n}\sum_{i=1}^n D_i (Z_i-\pi )  \right),\\
     c= &\left( \frac{1}{n}\sum_{i=1}^n Y_i (Z_i-\pi )  \right)^2,\\
     d= & \frac{\pi^2(1-\pi)^2}{n_1^2}\sum_{i\in [n]}Z_i\left(D_i -\frac{1}{n_1}\sum_{i\in[n]}Z_i D_i\right)^2 \\
     & +\frac{\pi^2(1-\pi)^2}{n_0^2}\sum_{i\in [n]}\left(1-Z_i\right)\left(D_i -\frac{1}{n_0}\sum_{i\in[n]}\left(1-Z_i\right)D_i\right)^2,\\
     e= &-2\frac{\pi^2(1-\pi)^2}{n_1^2}\sum_{i\in [n]}Z_i\left(Y_i -\frac{1}{n_1}\sum_{i\in[n]}Z_i Y_i\right)\left(D_i -\frac{1}{n_1}\sum_{i\in [n]}Z_iD_i\right) \\
     &-2\frac{\pi^2(1-\pi)^2}{n_0^2}\sum_{i\in [n]}(1-Z_i)\left(Y_i -\frac{1}{n_0}\sum_{i\in[n]}(1-Z_i) Y_i \right)\left(D_i -\frac{1}{n_0}\sum_{i\in [n]}(1-Z_i)D_i\right),  \\
     f= & \frac{\pi^2(1-\pi)^2}{n_1^2}\sum_{i\in [n]}Z_i\left(Y_i -\frac{1}{n_1}\sum_{i\in[n]}Z_i Y_i\right)^2 \\
      &+\frac{\pi^2(1-\pi)^2}{n_0^2}\sum_{i\in [n]}\left(1-Z_i\right)\left(Y_i -\frac{1}{n_0}\sum_{i\in[n]}\left(1-Z_i\right)Y_i\right)^2.
\end{align}

Note that the assignment vector $Z= (Z_1, \dots, Z_n)$ is a random vector in $\{0,1\}^n$. Let $\bs Z^*$ be a random vector with the same distribution as $\bs Z$, and independent from $\bs Z$. We denote the $k$th simulated assignment by $Z^{*}_k$, and its $i$th entry by $Z^{*}_{ik}$.

For the AR function calculated from the $k$th simulated assignment, we have 
\begin{equation}\label{eqn:AR_sq_ratio_rand}
   \Delta^{*2}_k(\beta) = \frac{a_k \beta^2 + b_k \beta +c_k }{d_k \beta^2 + e_k\beta + f_k},
\end{equation}
where the coefficients are calculated using the same formulas as above, but with the $Z_i$'s replaced by the simulated assignments $Z_{ik}^{*}$'s. 
For regression-adjusted AR statistics, similar expressions can be defined. We include them in Appendix \ref{app:reg_AR_expression}. 

We make two comments.
\begin{remark}
We note that the denominators $d\beta^2+e\beta+f$ and $d_k\beta^2+e_k\beta+f_k$ are non-negative variance estimators and will be positive over the entire real line unless the $Y_i-\beta D_i$ are perfectly correlated. For example, by \eqref{eqn:AR_var_rand}, we have that $d_k\beta^2+e_k\beta +f_k$ is equal to 
\begin{equation}
         \begin{split}
  \hat \sigma^{*2}_k(\beta) =&  \frac{\pi^2(1-\pi)^2}{n_1^2}\sum_{i\in [n]}Z^{*}_{ik} \left(Y_i - \beta D_i -  \frac{1}{n_1}\sum_{i\in [n]}Z^{*}_{ik}(Y_i - \beta D_i)\right)^2\\
 &+ \frac{\pi^2(1-\pi)^2}{n_0^2}\sum_{i\in [n]}\left(1-Z^{*}_{ik}\right) \left(Y_i - \beta D_i -  \frac{1}{n_0}\sum_{i\in [n]}\left(1-Z^{*}_{ik}\right) (Y_i - \beta D_i) \right)^2.  
    \end{split}
\end{equation}
This quantity will be zero only if the $Y_i-\beta D_i$ are perfectly correlated within both the treatment and control groups. That is,
\be
Y_i - Y_j = \beta (D_i-D_j)
\ee for some $\beta$ and for any pair $i$ and $j$ in the treatment or control group. This implies that the $Y_i$'s are completely parallel to $D_i$'s. 
Such datasets are relatively rare, especially for those with continuous outcomes. We shall assume from now on that $d\beta^2+e\beta+f$ and $d_k\beta^2+e_k\beta+f_k$ are positive on $\mathbb{R}$. This assumption implies that  $\Delta^2(\beta)$ and each $\Delta^{*2}_k(\beta)$ are continuous functions on $\mathbb{R}$.
\end{remark}
\begin{assumption}\label{assn:positive_variance}
   Let $d\beta^2+e\beta+f$ and $d_k\beta^2+e_k\beta+f_k$ be defined as in (\ref{eqn:AR_sq_ratio}) and (\ref{eqn:AR_sq_ratio_rand}), respectively. As functions of $\beta$, they are positive everywhere on $\mathbb{R}$.
\end{assumption}
\begin{remark}
Our algorithm makes use of the intersection points of two AR functions. The intersections of two AR functions are the solutions to the equality $\Delta^{2*}_k(\beta)=\Delta^{2*}_s(\beta)$, where $k$ and $s$ are two indices for simulated assignments. The solutions solve the equation
\begin{align}
        \frac{a_k \beta^2 + b_k \beta +c_k}{d_k\beta^2+e_k\beta+f_k}= 
        \frac{a_s\beta^2+b_s\beta+c_s}{d_s\beta^2+e_s\beta+f_s},
\end{align}
which, by Assumption \ref{assn:positive_variance}, is equivalent to
\begin{align}
    \left(a_k \beta^2 + b_k \beta +c_k\right)\left( d_s\beta^2+e_s\beta+f_s\right) = \left(a_s\beta^2+b_s\beta+c_s \right)\left(d_k\beta^2+e_k\beta+f_k\right).
\end{align}
Hence the solution set is the solution set of a polynomial function in $\beta$ with the degree at most 4. The solution set may be empty, a discrete set of size at most 4, or the real line. When the solution set is the real line, the two functions agree over the real line and they are the same function. From now on, when we discuss the intersections of two AR functions, we mean the intersections of two \textit{distinct} AR functions.
\end{remark}

\subsection{Algorithm Details and Guarantees} For every $\beta\in\mathbb{R}$, let $\mathcal{I}(\beta)$ be the set of indices of randomizations that attain equality in the definition of $\widehat{\eta}^*_{1-\alpha}(\beta,Z)$. That is, 
\begin{equation}\label{eqn:quantile_set}
    \mathcal{I}(\beta) = \bigr\{k: \big|\Delta^*_k(\beta)\big|=\widehat{\eta}^*_{1-\alpha}(\beta,Z) \bigl\}. 
\end{equation}
The set is always nonempty, by 
\eqref{eqn:rand_quantile_feasible}. The following lemma simplifies the test inversion task by partitioning the uncountably infinite parameter space into a finite number of intervals. Points in each interval share a common element in their index set $\mathcal{I}(\beta)$.
The lemma states that the index set $\mathcal{I}(\beta)$ can only change when two distinct AR functions intersect. Conversely, if there is no intersection on an open interval, then the index set remains the same. Moreover, the index set on this open set is a subset of the index set at its boundary points.


\begin{lemma}\label{lemma:cs_as_intervals}
Consider an open interval $(\beta_1,\beta_2)$, where it is possible that $\beta_1=-\infty$ or $\beta_2=\infty$. Under Assumption \ref{assn:positive_variance}, and supposing there is no intersection among the AR functions $\{\Delta^{*2}_{k}(\beta)\}_{k=1}^{\nmc}$ in $(\beta_1,\beta_2)$, then
\begin{enumerate}[label=(\roman*)]
\item the index set $\mathcal{I}(\beta)$ remains the same on $(\beta_1,\beta_2)$,
\item $\mathcal{I}(\beta)\subset \mathcal{I}(\beta_1) \cap \mathcal{I}(\beta_2)$ for all $\beta\in (\beta_1,\beta_2)$, if both $\beta_1\in\mathbb{R}$ and $\beta_2\in\mathbb{R}$,
\item $\mathcal{I}(\beta)\subset \mathcal{I}(\beta_2)$ for all $\beta\in (\beta_1,\beta_2)$, if $\beta_1=-\infty$ and $\beta_2\in\mathbb{R}$,
\item $\mathcal{I}(\beta)\subset \mathcal{I}(\beta_1)$ for all $\beta\in (\beta_1,\beta_2)$, if $\beta_2=\infty$ and $\beta_1\in\mathbb{R}$.
\end{enumerate}
\end{lemma}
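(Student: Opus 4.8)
The plan is to analyze how the feasible critical value $\widehat{\eta}^*_{1-\alpha}(\beta,Z)$ and the index set $\mathcal{I}(\beta)$ vary with $\beta$ on the interval $(\beta_1,\beta_2)$, using the hypothesis that no two distinct AR functions intersect there. The key structural fact I would establish first is this: for any two indices $k,s\in[\nmc]$, the sign of $|\Delta^*_k(\beta)| - |\Delta^*_s(\beta)|$ is constant on $(\beta_1,\beta_2)$. Indeed, by Assumption \ref{assn:positive_variance} the functions $\beta\mapsto\Delta^{*2}_k(\beta)$ are continuous on $\mathbb{R}$ (being ratios of everywhere-positive polynomials), so $\Delta^{*2}_k - \Delta^{*2}_s$ is continuous; if it changed sign on $(\beta_1,\beta_2)$ it would vanish somewhere there, contradicting the no-intersection assumption. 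Hence $\Delta^{*2}_k - \Delta^{*2}_s$ has a fixed sign (possibly zero, if the two functions are literally identical, which we may treat separately or fold in since then they are "the same function"), and the same holds for $|\Delta^*_k| - |\Delta^*_s|$ since these are nonnegative square roots.

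From this, part (i) is essentially immediate: the feasible quantile in \eqref{eqn:rand_quantile_feasible} is determined purely by the \emph{ordering} (ranking) of the values $\{|\Delta^*_k(\beta)|\}_{k=1}^\nmc$, together with ties among them — and by the previous paragraph both the pairwise orderings and the pairwise tie-patterns are constant on $(\beta_1,\beta_2)$. Concretely, $\widehat{\eta}^*_{1-\alpha}(\beta,Z)$ equals the value of $|\Delta^*_k(\beta)|$ for the index $k$ that occupies the $\lceil(1-\alpha)\nmc\rceil$-th position (in a tie-aware sense) in the sorted list; since the identity of that index (and of all indices tied with it) does not change on the interval, the set $\mathcal{I}(\beta)=\{k:|\Delta^*_k(\beta)|=\widehat{\eta}^*_{1-\alpha}(\beta,Z)\}$ is a fixed subset of $[\nmc]$ throughout $(\beta_1,\beta_2)$.

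For parts (ii)--(iv), fix $\beta_0\in(\beta_1,\beta_2)$ and let $\mathcal{I}_0=\mathcal{I}(\beta_0)$ be the common index set. Take an endpoint $\beta_*\in\{\beta_1,\beta_2\}$ that is finite; by continuity of each $\Delta^{*2}_k$ at $\beta_*$ (Assumption \ref{assn:positive_variance} gives continuity on all of $\mathbb{R}$, including the endpoint), the functional values converge: $|\Delta^*_k(\beta)|\to|\Delta^*_k(\beta_*)|$ as $\beta\to\beta_*$ from inside the interval. I would argue that every pairwise weak inequality $|\Delta^*_k(\beta)|\le|\Delta^*_s(\beta)|$ that holds for all $\beta\in(\beta_1,\beta_2)$ passes to the limit as a weak inequality at $\beta_*$. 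Consequently, for $k\in\mathcal{I}_0$ and any $s$, the relation $|\Delta^*_k|\le|\Delta^*_s|$ that held on the interval (since $k$ attains the quantile value, which is a minimum over a super-level-count set) continues to hold at $\beta_*$; running the definition \eqref{eqn:rand_quantile_feasible} at $\beta_*$ then forces $|\Delta^*_k(\beta_*)|\le\widehat{\eta}^*_{1-\alpha}(\beta_*,Z)$, and a complementary counting argument (the indices $\{s:|\Delta^*_s(\beta_*)|\le|\Delta^*_k(\beta_*)|\}$ include all those with $|\Delta^*_s|\le|\Delta^*_k|$ on the interval, whose empirical fraction is already $\ge 1-\alpha$) gives $|\Delta^*_k(\beta_*)| \ge \widehat{\eta}^*_{1-\alpha}(\beta_*,Z)$, hence $k\in\mathcal{I}(\beta_*)$. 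This yields $\mathcal{I}_0\subset\mathcal{I}(\beta_*)$ for each finite endpoint, which is exactly (ii), (iii), and (iv) depending on which of $\beta_1,\beta_2$ are finite (an infinite endpoint simply drops out of the claim).

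The main obstacle I anticipate is the careful bookkeeping around \emph{ties} and the behavior of the strict-versus-weak inequalities under the limit to the endpoint: a pair of AR functions that is strictly ordered on the open interval may become tied exactly at $\beta_*$ (the endpoint is precisely where intersections are allowed to occur), so one must check that such a new tie can only \emph{enlarge} the index set at the boundary, never shrink it — which is why the inclusion in (ii)--(iv) goes one way and not the other. Making the "$\lceil(1-\alpha)\nmc\rceil$-th order statistic" description of $\widehat{\eta}^*$ precise enough to support the limiting argument, while matching it to the definition \eqref{eqn:rand_quantile_feasible} verbatim, is the one place where genuine care (rather than routine estimation) is needed.
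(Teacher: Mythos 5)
Your proposal is correct and follows essentially the same route as the paper's own proof: part (i) via the observation that, absent intersections, the continuous differences $\Delta^{*2}_k-\Delta^{*2}_s$ cannot change sign on $(\beta_1,\beta_2)$, so the ranking and tie pattern of the AR functions are fixed there, and parts (ii)--(iv) by passing the weak pairwise dominations and the counting condition in \eqref{eqn:rand_quantile_feasible} to the finite endpoints by continuity. The boundary-tie subtlety you flag (an index newly qualifying at $\beta_*$ with a smaller value) is handled no more explicitly in the paper's proof than in yours, so your write-up is at a comparable level of rigor.
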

\begin{proof}
If there is no intersection among the distinct AR functions on the interval  $(\beta_1,\beta_2)$, the ranking of the values of the curves at each $\beta$ remains the same, and hence the index set $\mathcal{I}(\beta)$ remains the same. This shows the first claim. 
The second statement follows by a contradiction: for $\beta_1\in\mathbb{R}$ and $\beta_2\in\mathbb{R}$, suppose there is a $t\in\mathcal{I}(\beta)$ and $t\not\in \mathcal{I}(\beta_1) \cap \mathcal{I}(\beta_2)$. The collection of squared AR statistics that are less than or equal to $\Delta^{*2}_t(\beta)$ over $(\beta_1,\beta_2)$ will also be weakly dominated on the boundary by continuity. Thus, in order for $t\not\in \mathcal{I}(\beta_1) \cap \mathcal{I}(\beta_2)$, there must exist an index $s$ such that $\Delta^{*2}_{s}(\beta)\geq \Delta^{*2}_{t}(\beta)$ on $(\beta_1,\beta_2)$ but $\Delta^{*2}_{s}(\beta)<\Delta^{*2}_{t}(\beta)$ at points $\beta_1$ or $\beta_2$. This is impossible by continuity. The arguments are similar if $\beta_1=-\infty$ or $\beta_2=\infty$.
\end{proof}

The lemma implies that we can partition the real line into multiple intervals according to the intersections of distinct AR functions. Note that the intersections are the solutions of polynomials with degree at most 4. For such polynomials, closed-form solutions are available and can be computed in constant time. 

There are at most $\nmc$ draws of assignments, and hence we have at most $4\nmc^2$ possible intersections. We can sort the intersections on the real line and divide the real line into $K$ intervals $\{[\beta_i, \beta_{i+1}]\}_{i=0}^{K}$, where $\beta_0\equiv -\infty$ and $\beta_{K+1}\equiv \infty$.\footnote{With an abuse of notation, we mean $(-\infty,\beta_1]$ when we write $[\beta_0,\beta_1]$ with $\beta_0=-\infty$, and similarly $[\beta_K,\infty)$ when we write $[\beta_K,\beta_{K+1}]$ with $\beta_{K+1}=\infty$. } On any such interval $[\beta_i,\beta_{i+1}]$, by Lemma \ref{lemma:cs_as_intervals}, there exists an element $t_i$ such that $t_i\in \mathcal{I}(\beta)$ for any $\beta\in [\beta_i,\beta_{i+1}]$ and we have 
\begin{equation}
    \bigl\{\beta: \Delta^2(\beta) \leq \widehat{\eta}^*_{1-\alpha}(\beta,Z),\beta\in [\beta_i,\beta_{i+1}] \bigr\}=       \bigl\{\beta: \Delta^2(\beta)\leq \Delta_{t_i}^{*2}(\beta)\bigr\}\cap [\beta_i,\beta_{i+1}].
\end{equation}
Note that to find $\bigr\{\beta:\Delta^2(\beta)\leq \Delta_{t_i}^{*2}(\beta)\bigl\}$, one only needs to find the zeros of a polynomial function with the degree at most 4. The zeros of the polynomials, and hence the region where the polynomial is nonpositive, can be computed in constant time. 
Thus the confidence set \eqref{eqn:perm_cs} can be constructed efficiently by taking the intersection of the interval $[\beta_i,\beta_{i+1}]$ with the set $\bigr\{\beta:\Delta^2(\beta)\leq \Delta_{t_i}^{*2}(\beta)\bigl\}$.
For regression-adjusted AR statistics, the procedure is the same, but with different AR functions $\Delta^2_{\reg}(\beta)$ and $\Delta^{*2}_{\reg,s}(\beta)$. We document them in Appendix \ref{app:reg_AR_expression}.     
We summarize the algorithm for confidence sets without regression adjustment in Algorithm \ref{alg:1.1} below.
\begin{algo}\label{alg:1.1}
Constructing Confidence Set (\ref{eqn:perm_cs})
\begin{tabbing}
   \qquad \enspace \textbf{Require:} Observed data $\{Y_i,D_i,Z_i\}_{i\in [n]}$, simulated assignments $\{Z^*_{ik}\}_{i\in[n],k\in [\nmc]}$ \\
   \qquad \enspace \textbf{Step 1:} For every $s,t\in [\nmc]$ that define distinct AR functions $\Delta^{*2}_s(\beta)$ and $\Delta^{*2}_t(\beta)$,\\
   \qquad \qquad calculate their intersections\\
   \qquad \enspace \textbf{Step 2:}   Sort the distinct intersections on the real line and denote the intervals that result by $\bigl\{[\beta_i, \beta_{i+1}]\bigr\}_{i=0}^{K}$\\
   \qquad \enspace \textbf{Step 3:} For each interval $[\beta_i, \beta_{i+1}]$: \\
   \qquad \qquad If $\beta_i\neq -\infty$ and $\beta_{i+1}\neq +\infty$:\\
   \qquad \qquad \qquad Calculate $\mathcal{I}( \left(\beta_i+\beta_{i+1}\right)/2)$, defined  in (\ref{eqn:quantile_set})\\
   \qquad \qquad \qquad Select any $t_i$ such that $t_i\in\mathcal{I}( \left(\beta_i+\beta_{i+1}\right)/2)$\\
   \qquad \qquad If $\beta_i=\beta_0=-\infty$:\\
   \qquad \qquad \qquad Select any $t_i\in \mathcal{I}(\beta_1-1)$\\
   \qquad \qquad If $\beta_{i+1}=\beta_{K+1}=+\infty$:\\
   \qquad \qquad \qquad Select any $t_i\in\mathcal{I}(\beta_{K}+1)$ \\
  \qquad \enspace \textbf{Step 4:} For each interval $\bigl\{[\beta_i, \beta_{i+1}]\bigr\}_{i=0}^{K}$:\\
 \qquad \qquad Find $\mathrm{CS}_i=\bigl\{\beta: \big|\Delta^2(\beta)\big| \leq \big|\Delta^{*2}_{t_i}(\beta)\big|,\beta\in [\beta_i,\beta_{i+1}] \bigr\}$\\
  \qquad \enspace \textbf{Step 5:} Return $\mathrm{CS}=\bigcup_{i=1}^K \mathrm{CS}_i$
\end{tabbing}
\end{algo}



The construction of confidence set with covariates are documented in Algorithm \ref{alg:1.2} in  Appendix \ref{app:reg_AR_expression}. In Appendix \ref{appendix:algo}, we outline alternative algorithms (Algorithm \ref{alg:2.1} and Algorithm \ref{alg:2.2}). They are variants of Algorithm \ref{alg:1.1} and \ref{alg:1.2} and can also construct the confidence set exactly. They are in practice much faster than Algorithm \ref{alg:1.1} and \ref{alg:1.2}.

\begin{theorem}\label{thm:calculation}
  Without covariates and under Assumption \ref{assn:positive_variance}, the confidence set $\mathrm{CS}$ returned by Algorithm \ref{alg:1.1} or Algorithm \ref{alg:2.1} is equal to the one in (\ref{eqn:perm_cs}).
    With covariates and under Assumption \ref{assn:positive_variance_adj}, the confidence set $\mathrm{CS}$ returned by Algorithm \ref{alg:1.2} or Algorithm \ref{alg:2.2} is equal to the one in (\ref{eqn:perm_cs_adj}).\footnote{Assumption \ref{assn:positive_variance_adj} is stated in Appendix \ref{app:reg_AR_expression}. }
\end{theorem}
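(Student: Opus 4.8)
The plan is to verify that Algorithm \ref{alg:1.1} faithfully implements the set-theoretic decomposition already established in the text, and that the same reasoning carries over verbatim to the regression-adjusted case. The argument is essentially a bookkeeping exercise built on \Cref{lemma:cs_as_intervals}, so the main work is to confirm that no $\beta$ is misclassified at the interval boundaries or at the two infinite endpoints. First I would observe that, under Assumption \ref{assn:positive_variance}, each $\Delta^{*2}_k(\beta)$ is a continuous function on $\R$ (as noted in the remark preceding the assumption), so the intersection set of any two distinct AR functions is the zero set of a degree-$\le 4$ polynomial, hence finite. Taking the union over all $\binom{\nmc}{2}$ pairs gives at most $4\nmc^2$ points; sorting them produces the partition $\{[\beta_i,\beta_{i+1}]\}_{i=0}^K$ used in Step 2, with $\beta_0=-\infty$, $\beta_{K+1}=\infty$.

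Next I would fix an index $i$ and show $\mathrm{CS}_i = \hat I^*_{1-\alpha}(Z)\cap[\beta_i,\beta_{i+1}]$. On the open interval $(\beta_i,\beta_{i+1})$ there are no intersections among the AR functions, so by \Cref{lemma:cs_as_intervals}(i) the index set $\mathcal I(\beta)$ is constant there; call it $\mathcal I_i$. Step 3 picks some $t_i\in\mathcal I_i$: when both endpoints are finite it evaluates $\mathcal I$ at the midpoint, which lies in $(\beta_i,\beta_{i+1})$ and hence has index set $\mathcal I_i$; when $\beta_i=-\infty$ it evaluates at $\beta_1-1$, which by construction lies to the left of every intersection point and so again sits in the leftmost open cell, and symmetrically for $\beta_{K+1}=\infty$. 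Thus $t_i\in\mathcal I(\beta)$ for every $\beta\in(\beta_i,\beta_{i+1})$, and by parts (ii)--(iv) of the lemma also $t_i\in\mathcal I(\beta_i)$ and $t_i\in\mathcal I(\beta_{i+1})$ whenever these endpoints are finite. Consequently $\hat\eta^*_{1-\alpha}(\beta,Z)=|\Delta^*_{t_i}(\beta)|$ for all $\beta\in[\beta_i,\beta_{i+1}]$, so
\[
\hat I^*_{1-\alpha}(Z)\cap[\beta_i,\beta_{i+1}]
=\{\beta:\Delta^2(\beta)\le \Delta^{*2}_{t_i}(\beta)\}\cap[\beta_i,\beta_{i+1}]
=\mathrm{CS}_i,
\]
where I have squared both sides of $|\Delta(\beta)|\le|\Delta^*_{t_i}(\beta)|$ (legitimate since both sides are nonnegative). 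Taking the union over $i$ in Step 5 recovers \eqref{eqn:perm_cs} exactly, since the closed intervals $[\beta_i,\beta_{i+1}]$ cover $\R$ and overlaps occur only at the finitely many boundary points, where the displayed identity holds regardless of which adjacent cell is used.

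For the covariate-adjusted case I would note that the regression-adjusted AR functions $\Delta^2_{\reg}(\beta)$ and $\Delta^{*2}_{\reg,s}(\beta)$ admit analogous ratio-of-quadratics representations (recorded in Appendix \ref{app:reg_AR_expression}), so under Assumption \ref{assn:positive_variance_adj} their pairwise intersections are again zeros of degree-$\le 4$ polynomials and the entirety of \Cref{lemma:cs_as_intervals} applies with the adjusted functions in place of the unadjusted ones. Algorithm \ref{alg:1.2} has the same structure as Algorithm \ref{alg:1.1}, so the identical argument gives that its output equals \eqref{eqn:perm_cs_adj}. Finally, the alternative algorithms (Algorithm \ref{alg:2.1} and Algorithm \ref{alg:2.2}) produce the same partition and make the same per-interval selection of a dominating randomization index, differing only in the order and pruning of computations, so they return the same set. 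The one point requiring care — and the closest thing to an obstacle — is the treatment of the infinite cells: one must check that $\beta_1-1$ and $\beta_K+1$ genuinely lie in the extreme open cells (i.e., strictly beyond all intersection points), which is immediate once the intersection points are sorted, and that parts (iii)--(iv) of \Cref{lemma:cs_as_intervals} correctly pass the dominating index from the finite endpoint into the unbounded cell.
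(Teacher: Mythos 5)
Your argument for Algorithms \ref{alg:1.1} and \ref{alg:1.2} is correct and essentially the paper's own proof: partition $\R$ at the pairwise intersections, use \Cref{lemma:cs_as_intervals} to conclude that $\widehat{\eta}^*_{1-\alpha}(\beta,Z)=|\Delta^*_{t_i}(\beta)|$ on each cell, intersect, and take the union. Your care with the midpoint evaluation and the two unbounded cells matches (and slightly elaborates on) what the paper does.

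The gap is in your one-sentence treatment of Algorithms \ref{alg:2.1} and \ref{alg:2.2}. It is not true that they ``produce the same partition and make the same per-interval selection, differing only in the order and pruning of computations.'' The whole point of the faster algorithms is that they produce a \emph{coarser} partition: the cell $[e_{i-1},e_i]$ extends from the current point to the next intersection of the \emph{currently dominating} function $\Delta^{*2}_{s_i}$ with some other AR function, and may therefore contain several of the original breakpoints $\beta_j$ (the ``spurious'' intersections between two non-dominating functions). At such a spurious intersection the index set $\mathcal I(\beta)$ can change, so \Cref{lemma:cs_as_intervals}(i) alone does not give you that $s_i\in\mathcal I(\beta)$ throughout $[e_{i-1},e_i]$. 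You need the additional observation the paper supplies: writing $[e_{i-1},e_i]=[e_{i-1},V_{i-1}]\cup[V_{i-1},e_i]$, the midpoint evaluation guarantees $s_i\in\mathcal I(\beta)$ on the first piece, and on $(V_{i-1},e_i)$ the function $\Delta^{*2}_{s_i}$ intersects no other distinct AR function, so its rank among the $\nmc$ curves cannot change even though other curves cross each other; hence $s_i$ continues to realize the $(1-\alpha)$ quantile up to $e_i$ (and at $e_i$ by continuity). Without this step the correctness of the pruning --- which is the only nontrivial content of Theorem \ref{thm:calculation} beyond the Algorithm \ref{alg:1.1} case --- is unproved.
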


The proof is included in Appendix \ref{section:alg_proof}.

\section{Simulations and Data Applications}\label{section:sim_and_dataapp}
\subsection{Simulations}
We conduct a simulation study to compare the properties of our method with the standard two-stage least squares (2SLS) method. The simulation is a completely randomized experiment with $n=100$ units, with half assigned to treatment ($n_1=50$).

For each unit, the potential outcomes under assignment to control are imputed as
\begin{equation}\label{sim:y0}
   y_i(0) = \tau_g + 0.1\mathcal{Z}_{1i}, \hspace{5pt} g\in \{\mathcal{A},\mathcal{N},\mathcal{C}\},
\end{equation}
where the $\mathcal{Z}_{1i}$ are independent standard normal variables, $g$ denotes the compliance status (principal stratum) of unit $i$, and $\mathcal{A},\mathcal{N}$, and $\mathcal{C}$ stand for always-takers, never-takers, and compliers, respectively. We choose $\tau_{\mathcal{A}}=-2$, $\tau_{\mathcal{C}}=-1$, and $\tau_{\mathcal{N}}=0$.  
The potential outcomes under treatment are imputed as
\begin{equation}\label{sim:y1}
    y_i(1) = y_i(0) + 0.1\mathcal{Z}_{2i},
\end{equation}
where the $\mathcal{Z}_{2i}$ are standard normal variables. For each group, the treatment effect is recentered to ensure that it is exactly zero. For each unit, we also generate three covariates, each independently following a standard normal distribution.

Thus, our simulation design features heterogeneous treatment effects and a zero local average treatment effect. Additionally, it introduces a strong negative correlation between potential outcomes and compliance status, which may arise in practice and exacerbates the weak IV problem \citep{nelson1988distribution, nelson1990, maddala1992exact, angrist2024one}.

In the simulations, we vary the size of the complier group, considering sizes of 5, 10, 20, 50, and 90, corresponding to compliance rates of 5\%, 10\%, 20\%, 50\%, and 90\%. The remaining population is equally divided between always-takers and never-takers.\footnote{When the remaining population is odd, the extra unit is assigned as an always-taker.} For each simulation scenario, we fix the potential outcomes and run 2,000 simulations.\footnote{For randomization-based confidence sets, we use $10^4$ randomly generated assignments and Algorithm \ref{alg:2.2} for faster computation.} One would expect the 2SLS confidence intervals to exhibit severe undercoverage when the compliance rate is low.

Table \ref{table1} presents the coverage rates of the 2SLS confidence intervals and randomization-based confidence intervals, along with the time required to construct the latter. The first row reports the coverage rates of the nominal 95\% 2SLS intervals. As the compliance rate decreases, the 2SLS intervals exhibit significant undercoverage. In contrast, the randomization-based confidence sets maintain nominal coverage rates even under low compliance rates.

For computational times, with $10^4$ random assignments, Algorithm \ref{alg:2.2} completes in under an hour, while Algorithm \ref{alg:1.2} takes significantly longer. With $10^3$ random assignments, both algorithms finish in under a minute. For practical use, we recommend using $10^3$ random assignments for exploratory analysis and $10^4$ for formal reporting.
These results demonstrate the practicality of our method. 

\begin{landscape}

\begin{table}[ht]
    \centering
\begin{tabular}{|r|c|c|c|c|c|}
  \hline
Compliance Rate & 5\% & 10\% & 20\% & 50\% & 90\% \\ 
  \hline
Coverage Rate (2SLS) &  0.73 & 0.84 & 0.90 & 0.96 & 0.96 \\ 
Coverage Rate (randomization) & 0.95 & 0.95 & 0.95 & 0.96 & 0.95 \\ 
Avg. Time (seconds, Algo \ref{alg:2.2}, $\nmc=10^4$) & 3298.71 & 3445.44 & 3254.20 & 3575.64 & 3422.52 \\  
Avg. Time (seconds, Algo \ref{alg:1.2}, $\nmc=10^4$) &  32764.91 & 34604.22 & 35347.09 & 36065.46 & 31410.07 \\ 
Avg. Time (seconds, Algo \ref{alg:2.2}, $\nmc=10^3$) & 8.86 & 8.58 & 8.39 & 8.22 & 7.84 \\ 
Avg. Time (seconds, Algo \ref{alg:1.2}, $\nmc=10^3$) & 57.81 & 55.28 & 51.99 & 49.73 & 48.16 \\ 
   \hline
\end{tabular}
    \caption{This table reports simulation results where potential outcomes are generated according to (\ref{sim:y0}) and (\ref{sim:y1}), with the LATE set to 0 in all cases. The number of simulations is 2000 for all cases. The first row shows the coverage rates of the nominal 95\% 2SLS intervals from the IV regression $Y\sim~1+D+X \vert 1,Z,X$. The second row shows the coverage rates of the nominal 95\% randomization-based confidence sets from (\ref{eqn:perm_cs_adj}). The third and fifth rows display the average computation time (in seconds) for constructing randomization-based confidence sets using Algorithm \ref{alg:2.2} with $10^4$ and $10^3$ simulated assignments, respectively. The fourth and sixth rows report the average computation time for Algorithm \ref{alg:1.2} with $10^4$ and $10^3$ simulated assignments, respectively. The values in the fourth, fifth, and sixth rows are calculated based on 50 separate simulations.  }
    \label{table1}
\end{table}
\end{landscape}
\subsection{Data Applications}
For an application, we apply our method to the six GOTV (Getting Out the Vote) experiments in \cite{green2003getting}.\footnote{Some experiments in the paper were not completely randomized, but we analyze the data as if they were.} This paper examines the effectiveness of door-to-door canvassing in increasing voter turnout in local elections in 2001. The authors conducted six field experiments to assess the impact of personal contact on voter participation. Their findings indicate that door-to-door canvassing can significantly boost voter turnout.

One complication in the experiments is one-sided noncompliance: some voters in the treatment group cannot be reached for personal contact. Consequently, the authors reported the local average treatment effect to assess the actual impact of personal contact. In the application below, we conduct a similar analysis. We report both the randomization confidence set without covariate adjustment and the randomization confidence set adjusted by the variable indicating whether the voter voted in the 2000 presidential election. We also present the results from the 2SLS regressions, one without adjustment and one adjusted using the same (interacted) pretreatment variable.

\begin{table}[ht]
    \centering
\begin{tabular}{|c|c|c|c|}
  \hline
City & Bridgeport & Raleigh & Minneapolis \\ 
\hline 
  Sample Size & 1307 & 4660 & 2827  \\ 
  \hline
  \multicolumn{4}{|c|}{Without Covariate}\\
\hline 
  Wald Est. & 0.163 & $-0.020$ & 0.102  \\ 
  TSLS CI & [0.035,0.292] & $[-0.080,0.039]$ & $[-0.069,0.273]$  \\ 
  Perm. CI & [0.033,0.294] & $[-0.081,0.040]$ & $[-0.068,0.275]$  \\ 
\hline 
  \multicolumn{4}{|c|}{With Covariate}\\
  \hline 
  Wald Est. & 0.173 & $-0.016$ & 0.108 \\ 
  TSLS CI & [0.054,0.293] & $[-0.072,0.040]$ & $[-0.046,0.263]$ \\ 
  Perm. CI & [0.053,0.296] & $[-0.073,0.039]$ & $[-0.046,0.265]$ \\ 
   \hline
\end{tabular}
    \caption{Applications to the \cite{green2003getting} dataset for cities Bridgeport, Raleigh, and Minneapolis. Row 1 reports the sample size for each city. Row 2 reports the Wald estimate without covariate, Row 3 reports the nominal 95\% confidence interval resulted from the instrumental variable regression $Y\sim 1+ D|1,Z$, and Row 4 reports the nomial 95\% randomization-based confidence interval. Rows 4–6 present similar results with covariate adjustments. Row 5 reports the nominal 95\% confidence interval resulted from the instrumental variable regression $Y\sim 1+ D+Z\cdot X+(1-Z)\cdot X|1,Z,Z\cdot X,(1-Z)\cdot X$.    }
    \label{tab2}
\end{table}

\begin{table}[ht]
    \centering
\begin{tabular}{|c|c|c|c|}
  \hline
City &  Detroit & Columbus & St.\ Paul \\ 
\hline 
  Sample Size &  4954 & 2424 & 2097 \\ 
  \hline
  \multicolumn{4}{|c|}{Without Covariate}\\
\hline 
  Wald Est. &  0.083 & 0.104 & 0.150 \\ 
  TSLS CI & $[-0.007,0.173]$ & $[-0.059,0.267]$ & [0.018,0.282] \\ 
  Perm. CI &  $[-0.009,0.176]$ & $[-0.057,0.266]$ & [0.016,0.284] \\ 
\hline 
  \multicolumn{4}{|c|}{With Covariate}\\
  \hline 
  Wald Est. & 0.078 & 0.111 & 0.103 \\ 
  TSLS CI &  [0.004,0.152] & $[-0.043,0.265]$ & $[-0.024,0.229]$ \\ 
  Perm. CI &  [0.003,0.153] & $[-0.042,0.267]$ & $[-0.025,0.231]$ \\ 
   \hline
\end{tabular}
    \caption{Applications to the \cite{green2003getting} dataset for cities Detroit, Columbus, and St.\ Paul. Row 1 reports the sample size for each city. Row 2 reports the Wald estimate without covariate, Row 3 reports the nominal 95\% confidence set resulted from the instrumental variable regression $Y\sim 1+ D|1,Z$, and Row 4 reports the nominal 95\% randomization-based confidence set. Rows 4–6 present similar results with covariate adjustments. Row 5 reports the nominal 95\% confidence interval resulted from the instrumental variable regression $Y\sim 1+ D+Z\cdot X+(1-Z)\cdot X|1,Z,Z\cdot X,(1-Z)\cdot X$.    }
    \label{tab3}
\end{table}

Tables \ref{tab2} and \ref{tab3} present results from six cities in \cite{green2003getting}. The tables include point estimates for the LATEs and nominal 95\% confidence intervals. Overall, the confidence intervals based on the 2SLS estimator are similar to the randomization-based intervals. Additionally, incorporating voting records from the previous election as a covariate 
shortens the confidence intervals, enabling better inference.

\appendix

\section{Notations and Definitions}\label{s:notation}

We let $\mathbb{N}$ denote the positive integers and recall that $\onen$ denotes the set $\{1,\dots ,n\}$. We let $C>0$ denote a constant (not depending on $n$), which may change line to line and depend on various other parameters. When $X_n$ and $Y_n$ are deterministic, we use the standard asymptotic notation $X_n = O (Y_n)$ for $n$-dependent quantities $X_n$ and $Y_n$, with $Y_n>0$, to mean that 
$\limsup_{n \rightarrow \infty} |X_n|/ {Y_n} < \infty$.
We also write $X_n \lesssim Y_n$ for $X_n = O(Y_n)$. We similarly write $ X = o(Y_n)$ if the limit is $0$. 

For stochastic $o$ and $O$ symbols, we follow the conventions in \cite{van2000asymptotic}. 
We write $X_n = O_p(1)$ to mean that for every $\epsilon >0$, there exists $M_\epsilon, N_\epsilon>0$  such that $\P( |X_n| > M_\epsilon ) < \epsilon$ for all $n > N_{\epsilon}$, where $N_{\epsilon}$ is an integer. 
We write $X_n = O_{p}(Y_n)$ when we mean that $X_n=Y_nR_n$ for some $R_n=O_p(1)$ and $X_n=o_p(Y_n)$ when $X_n/Y_n$ converges to zero in probability. When the statements hold almost surely, we replace them with $o_{a.s.}$ and $O_{a.s.}$.

We also define a set $\mathcal A$ of always-takers and a set $\mathcal N$ of never-takers by 
\be
\mathcal A = \{ i \in \onen : d_i(0) = 1,   d_i(1) =1 \},
\qquad
\mathcal N = \{ i \in \onen : d_i(0) = 0,   d_i(1) =0  \}.
\ee
Let $\mathcal T(1),\mathcal T(0) \subset [n]$ denote the sets of subjects in the treatment and control groups defined by $Z$, respectively, so that 
\be
\mathcal T(1) = \{ i \in \onen : Z_i =1\},
\qquad
\mathcal T(0) = \{ i \in \onen : Z_i=0 \}.
\ee

\subsection{Variances and Covariances} For any vector $ \bs q = (q_i)_{i=1}^n \in \R^n$, we define the population mean and variance by
\be
\mu_{\bs q} = \frac{1}{n-1} \sum_{i=1}^n q_i, \qquad \sigma^2_{\bs q} = \frac{1}{n-1} \sum_{i=1}^n ( q_i - \mu_{\bs q})^2. 
\ee 
We also define within-group variances for each group $G \in \{\mathcal A, \mathcal C, \mathcal N \}$ by 
\be
\sigma^2_{\bs q, G} = \frac{1}{|G|-1} \sum_{i \in G} ( q_i - \mu_{\bs q})^2,
\ee
if $|G| \ge 2$ and zero otherwise.

Given an additional vector  $ \bs \tilde q = (\tilde q_i)_{i=1}^n \in \R^n$, we define the population covariance by
\be
\sigma_{\bs q \bs \tilde q} = \frac{1}{n} \sum_{i=1}^n (q_i - \mu_{\bs q})(\tilde q_i - \mu_{\bs \tilde q}),
\ee
and the within-group covariances by 
\be
\sigma_{\bs q \bs \tilde q, G} = \frac{1}{|G|-1} \sum_{i\in G} (q_i - \mu_{\bs q})(\tilde q_i - \mu_{\bs \tilde q}),
\ee
if $|G| \ge 2$ and zero otherwise.\\

\subsection{Normal Quantiles} Let $\mathcal{Z}$ be a mean zero, variance one normal random variable. We frequently denote its distribution by $N(0,1)$. For an $\alpha \in (0,1)$, define $z_{1-\alpha}\in \R$ as the value such that  
$
\P ( \mathcal{Z} > z_{1 - \alpha} ) = \alpha. 
$
We note that for $\alpha\in (0,1)$, $z_{1-\alpha/2}$ is the $1-\alpha$ quantile of the absolute value of a standard normal random variable because $
\P ( |\mathcal{Z}| > z_{1 - \alpha/2} ) = \alpha. 
$
\section{Useful Theorems}

\subsection{Variance Representation}
Recall the characterization of $\widehat{\tau}(\beta)$ in (\ref{eq:dim_numerator}). 
The following lemma is an immediate consequence of \cite[Theorem 6.2]{imbens2015causal}. 
\begin{lemma}\label{l:imbensrubin}
For all $\beta \in \R$, the variance of $\hat \tau(\beta)$ is
\begin{equation}\label{eqn:ARvariance1}
   \Var\big(\hat \tau (\beta)\big) = \frac{S^2_1(\beta)}{n_1} + \frac{S^2_0(\beta)}{n_0} - \frac{S^2_{10}(\beta)}{n}, 
\end{equation}
where 
\begin{align}
    S^2_1(\beta) &=\frac{\pi^2(1-\pi)^2}{n-1}\sum_{i\in [n]} v_i(1,\beta)^2,
    \label{eqn:variances1}\\
    S^2_0(\beta) &=\frac{\pi^2(1-\pi)^2}{n-1}\sum_{i\in [n]} v_i(0,\beta)^2 \label{eqn:variances2}, \\
    S^2_{10}(\beta) &=\frac{\pi^2(1-\pi)^2}{n-1}\sum_{i\in [n]} 
    \big( v_i(1,\beta) - v_i(0,\beta) \big)^2\label{eqn:variances3} \\
    v_{i}(1,\beta)&= y_i(d_i(1))-\beta d_i(1) - \frac{1}{n}\sum_{i\in [n]}\big(y_i(d_i(1))-\beta d_i(1)\big), \label{v11}\\
 v_{i}(0,\beta)&= y_i(d_i(0))-\beta d_i(0) - \frac{1}{n}\sum_{i\in [n]}\big(y_i(d_i(0))-\beta d_i(0)\big), \label{v00}. 
\end{align}
\end{lemma}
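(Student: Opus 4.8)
The plan is to recognize $\hat\tau(\beta)$, up to the deterministic scalar $\pi(1-\pi)$, as the Neyman difference-in-means estimator of a completely randomized experiment with a relabeled pair of potential outcomes, and then to quote the classical Neyman variance identity, which is exactly \cite[Theorem 6.2]{imbens2015causal}.

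First I would fix $\beta$ and introduce the deterministic quantities $a_i(1) := y_i(d_i(1)) - \beta d_i(1)$ and $a_i(0) := y_i(d_i(0)) - \beta d_i(0)$; in the design-based framework these are non-random, since the sole randomness is in $Z$. Because $Y_i - \beta D_i = y_i(d_i(Z_i)) - \beta d_i(Z_i) = Z_i a_i(1) + (1 - Z_i) a_i(0)$, the identity \eqref{eq:dim_numerator} rewrites as $\hat\tau(\beta) = \pi(1-\pi)\,\widehat\Delta(\beta)$, where
\[
\widehat\Delta(\beta) = \frac{1}{n_1}\sum_{i\in\onen} Z_i a_i(1) - \frac{1}{n_0}\sum_{i\in\onen}(1 - Z_i) a_i(0)
\]
is precisely the difference-in-means estimator of a completely randomized design with $n_1$ treated units, applied to the fixed potential-outcome arrays $\{a_i(1)\}_{i\in\onen}$ and $\{a_i(0)\}_{i\in\onen}$.

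Next I would apply the Neyman variance formula to $\widehat\Delta(\beta)$, obtaining $\Var(\widehat\Delta(\beta)) = \mathcal S_1^2/n_1 + \mathcal S_0^2/n_0 - \mathcal S_{10}^2/n$, where $\mathcal S_1^2$ and $\mathcal S_0^2$ are the finite-population variances (with divisor $n-1$) of $\{a_i(1)\}$ and $\{a_i(0)\}$, and $\mathcal S_{10}^2$ is the corresponding variance of the unit-level differences $\{a_i(1) - a_i(0)\}$. Multiplying by $\pi^2(1-\pi)^2$ gives $\Var(\hat\tau(\beta)) = \pi^2(1-\pi)^2\,\Var(\widehat\Delta(\beta))$. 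To conclude, I would check against \eqref{v11}--\eqref{v00} that $v_i(1,\beta) = a_i(1) - \tfrac{1}{n}\sum_j a_j(1)$, that $v_i(0,\beta) = a_i(0) - \tfrac{1}{n}\sum_j a_j(0)$, and that $v_i(1,\beta) - v_i(0,\beta) = (a_i(1) - a_i(0)) - \tfrac{1}{n}\sum_j (a_j(1) - a_j(0))$, so that $\pi^2(1-\pi)^2\mathcal S_1^2 = S_1^2(\beta)$, $\pi^2(1-\pi)^2\mathcal S_0^2 = S_0^2(\beta)$, and $\pi^2(1-\pi)^2\mathcal S_{10}^2 = S_{10}^2(\beta)$ in the notation of \eqref{eqn:variances1}--\eqref{eqn:variances3}; substituting then yields \eqref{eqn:ARvariance1}.

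I do not expect a genuine obstacle here: the statement is a direct specialization of Neyman's theorem. The only points that require care are verifying that $Y_i - \beta D_i$ really does collapse to a two-valued function of $Z_i$, so that the relabeling $a_i(\cdot)$ is legitimate — this is exactly where one uses that all randomness resides in $Z$ — and the bookkeeping of the $\pi^2(1-\pi)^2$ scaling together with the $n$-versus-$(n-1)$ normalizations when passing from the $\mathcal S^2$ quantities to the $S^2(\beta)$ of the statement.
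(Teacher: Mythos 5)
Your proposal is correct and matches the paper's approach: the paper offers no written proof, stating only that the lemma is an immediate consequence of \cite[Theorem 6.2]{imbens2015causal}, and your argument simply spells out why — the relabeling $a_i(a)=y_i(d_i(a))-\beta d_i(a)$, the factor $\pi(1-\pi)$ from \eqref{eq:dim_numerator}, and the Neyman variance identity. The bookkeeping of the centering in \eqref{v11}--\eqref{v00} and the $(n-1)$ normalization is handled correctly.
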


\subsection{Combinatorial Central Limit Theorem}
Consider a completely randomized experiment where $n_1$ out of $n$ subjects are assigned to the treatment. Let $Z_i$ be the binary variable indicating the assignment status of the $i$th subject. Given a collection of ordered pairs, $\{(A_i(1),A_i(0))\}_{i=1}^n$, we define the difference in means estimator
\begin{equation*}
    \hat \tau_A =\frac{1}{n_1}\sum_{i\in [n]} Z_i A_i(1) - \frac{1}{n_0}\sum_{i\in [n]} (1-Z_i)A_i(0).
\end{equation*}

For $a\in \{0,1\}$, define $A(a) = \{ A_1(a), A_2(a), \dots, A_n(a) \}$. 
The following theorem is adapted from \cite[Theorem 4]{li2017general}. 
\begin{theorem}\label{theorem:clt}
Consider a sequence of completely randomized experiments where in the $n$th experiment, $n_{1n}$  out of $n$ units are assigned to treatment, and define $n_{0n}=n-n_{1n}$. Suppose that
 \begin{equation}\label{noteme}
    \lim_{n \rightarrow \infty} \max_{a\in\{0,1\}}\frac{1}{n_{an}^2}\frac{\max_{i\in [ n]} \big| A_i(a)-\mu_{\bs A(a)}  \big|^2  }{\Var(\hat{\tau}_A)} = 0. 
\end{equation}
Then 
\begin{equation*}
    \frac{\hat{\tau}_A-\E[\hat{\tau}_A] }{\sqrt{\Var(\hat{\tau}_A )}} \xrightarrow{d} N(0,1).
\end{equation*}
\end{theorem}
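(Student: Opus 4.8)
The plan is to identify $\widehat\tau_A$, up to an additive constant, with the total of a simple random sample drawn without replacement, and then invoke a classical combinatorial central limit theorem. This is in effect the scalar specialization of \cite[Theorem 4]{li2017general}, so one option is to quote that result directly after checking that its hypothesis reduces to \eqref{noteme}; below I outline a self-contained version built on Hoeffding's combinatorial CLT.

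First I would rewrite the statistic as
\[
\widehat\tau_A \;=\; \sum_{i\in\onen} Z_i\, c_i \;-\; \frac{1}{n_0}\sum_{i\in\onen} A_i(0),
\qquad
c_i \;:=\; \frac{A_i(1)}{n_1} + \frac{A_i(0)}{n_0}.
\]
The second sum is nonrandom and standardization is invariant under affine maps, so it suffices to prove $(T-\E T)/\sqrt{\Var T}\xrightarrow{d} N(0,1)$ for $T := \sum_{i\in\onen} Z_i c_i$, i.e.\ for the sum of the $c_i$ over a uniformly random size-$n_1$ subset of $\onen$. I would realize the completely randomized design through a uniform random permutation $\Pi$ of $\onen$ by setting $Z_i = \one\{\Pi(i)\le n_1\}$; then $T = \sum_{i}b_{i\Pi(i)}$ with $b_{ij} = c_i\,\one\{j\le n_1\}$, so $T$ is a Hoeffding permutation statistic. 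Its doubly-centered coefficient array is $d_{ij} = (c_i-\bar c)\big(\one\{j\le n_1\}-n_1/n\big)$ with $\bar c = n^{-1}\sum_i c_i$, and a routine computation gives $\E T = n_1\bar c$ together with $\Var T = (n-1)^{-1}\sum_{i,j}d_{ij}^2 = \tfrac{n_1 n_0}{n(n-1)}\sum_i(c_i-\bar c)^2$, which one checks is the standard Neyman-type finite-population variance of a simple-random-sampling total (cf.\ the expression in Lemma \ref{l:imbensrubin}).

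The combinatorial CLT applies once $\max_{i,j}d_{ij}^2/\Var T \to 0$, and I would verify this from \eqref{noteme}. Since $|\one\{j\le n_1\}-n_1/n|\le 1$ we have $\max_{i,j}d_{ij}^2\le \max_i(c_i-\bar c)^2$, and the triangle inequality $|c_i-\bar c|\le |A_i(1)-\mu_{\bs A(1)}|/n_1 + |A_i(0)-\mu_{\bs A(0)}|/n_0$ gives
\[
\frac{\max_{i,j}d_{ij}^2}{\Var T} \;\le\; \frac{\max_i(c_i-\bar c)^2}{\Var(\widehat\tau_A)} \;\le\; 4\max_{a\in\{0,1\}} \frac{1}{n_{an}^2}\,\frac{\max_{i\in\onen}\big|A_i(a)-\mu_{\bs A(a)}\big|^2}{\Var(\widehat\tau_A)} \;\longrightarrow\; 0 ,
\]
the last step being exactly \eqref{noteme}. (The same bound shows \eqref{noteme} forces $\min(n_1,n_0)\to\infty$, so there is no hidden degeneracy; in any case Hoeffding's theorem requires only the displayed maximal condition.) Concluding via Hoeffding's combinatorial CLT — equivalently the classical central limit theorem for simple random sampling without replacement — yields asymptotic normality of $(T-\E T)/\sqrt{\Var T}$, hence of the standardized $\widehat\tau_A$.

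The main obstacle is not conceptual but lies in lining up the two normalizations: hypothesis \eqref{noteme} is stated in terms of $\widehat\tau_A$ and carries the $n_{an}^{-2}$ weights, whereas the combinatorial CLT wants a maximal condition on the coefficient array $d_{ij}$, and the observation making the displayed estimate go through is precisely that the $n_1, n_0$ in the denominators of $c_i$ convert one into the other. If one wanted a fully self-contained argument, the remaining labor would be the proof of Hoeffding's combinatorial CLT itself (via Stein's method with exchangeable pairs, or the method of moments) — standard, and since the statement is attributed to \cite{li2017general}, the economical route is to present the reduction above and cite their theorem (or Hoeffding's) for the limit.
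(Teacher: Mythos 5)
The paper does not prove this theorem at all: it is stated as an adaptation of \cite[Theorem 4]{li2017general} and used as a black box, so there is no internal argument to compare against. Your self-contained reduction is correct and is, in substance, the standard proof of the cited result. The algebra checks out: $\hat\tau_A=\sum_i Z_i c_i - n_0^{-1}\sum_i A_i(0)$ with $c_i=A_i(1)/n_1+A_i(0)/n_0$; the doubly-centered array is $d_{ij}=(c_i-\bar c)\big(\one\{j\le n_1\}-n_1/n\big)$, giving $\Var T=\tfrac{n_1n_0}{n(n-1)}\sum_i(c_i-\bar c)^2$; and the chain $\max_{i,j}d_{ij}^2\le\max_i(c_i-\bar c)^2\le 4\max_a n_a^{-2}\max_i|A_i(a)-\mu_{\bs A(a)}|^2$ converts \eqref{noteme} into Hoeffding's maximal condition $\max_{i,j}d_{ij}^2/\Var T\to 0$ (which, via $\tfrac1n\sum_{i,j}|d_{ij}|^r\le(\max|d_{ij}|)^{r-2}\cdot\tfrac1n\sum_{i,j}d_{ij}^2$, implies the full moment condition of the combinatorial CLT). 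What your route buys over the bare citation is transparency about exactly where the $n_{an}^{-2}$ weights in \eqref{noteme} come from — they are the coefficients that turn the per-arm maxima into a bound on the permutation array — at the cost of having to invoke (or prove) Hoeffding's theorem rather than Li--Ding's packaged statement.
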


\subsection{Difference in Means Estimator}

The lemma below follows from a standard computation, so we omit the proof.
\begin{lemma}\label{l:unbiased}
We have $\E[ \hat \tau(\beta) ] = 0$ when $\beta = \frac{1}{|\mathcal{C}|}\sum_{i\in \mathcal{C}}\left(y_i(1)-y_i(0)\right)$. 
\end{lemma}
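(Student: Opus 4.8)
The plan is to prove that $\E[\hat\tau(\beta)]=0$ when $\beta=\bar\beta:=\frac{1}{|\mathcal C|}\sum_{i\in\mathcal C}(y_i(1)-y_i(0))$ by a direct computation using linearity of expectation and the fact that $\P(Z_i=1)=\pi$ for every $i$. Starting from the representation $\hat\tau(\beta)=\frac1n\sum_{i=1}^n (Y_i-\beta D_i)(Z_i-\pi)$, I would substitute $Y_i=y_i(D_i)$ and $D_i=d_i(Z_i)$, then split the sum over the three principal strata $\mathcal A$ (always-takers, $d_i(1)=d_i(0)=1$), $\mathcal N$ (never-takers, $d_i(1)=d_i(0)=0$), and $\mathcal C$ (compliers, $d_i(0)=0$, $d_i(1)=1$), using the exclusion restriction so that $y_i$ depends only on realized treatment status.

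The key observation is that for always-takers and never-takers, $D_i$ is constant in $Z_i$, so $Y_i-\beta D_i$ is a fixed number (not random), and hence $\E[(Y_i-\beta D_i)(Z_i-\pi)] = (Y_i-\beta D_i)\,\E[Z_i-\pi]=0$ for each such $i$. Thus only the complier terms survive. For a complier $i\in\mathcal C$, we have $Y_i-\beta D_i = y_i(1)-\beta$ when $Z_i=1$ and $Y_i-\beta D_i = y_i(0)$ when $Z_i=0$; a short computation gives
\begin{equation}
\E\big[(Y_i-\beta D_i)(Z_i-\pi)\big] = \pi(1-\pi)\big( (y_i(1)-\beta) - y_i(0) \big) = \pi(1-\pi)\big( y_i(1)-y_i(0) - \beta \big).
\end{equation}
Summing over $i\in\mathcal C$ yields $\E[\hat\tau(\beta)] = \frac{\pi(1-\pi)}{n}\sum_{i\in\mathcal C}\big(y_i(1)-y_i(0)-\beta\big)$, which vanishes precisely when $\beta=\bar\beta$ by definition of $\bar\beta$.

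There is really no main obstacle here: the only point requiring care is the bookkeeping of which quantities are random (only the $Z_i$, hence $D_i$ for compliers) versus fixed (the potential outcomes, and $D_i$ for always-/never-takers), and correctly invoking the exclusion restriction and monotonicity to reduce to the three strata. Alternatively, one can bypass the stratification entirely by noting that $\hat\tau(\beta)$ is, up to the constant $\pi(1-\pi)$, the difference-in-means estimator $\hat\tau_A$ of Theorem~\ref{theorem:clt} applied to $A_i(a) = y_i(d_i(a)) - \beta d_i(a)$, whose expectation is $\frac1n\sum_{i=1}^n(A_i(1)-A_i(0))$ since $\E[Z_i]=\pi=n_1/n$; one then checks $A_i(1)-A_i(0) = 0$ for $i\in\mathcal A\cup\mathcal N$ and $A_i(1)-A_i(0) = y_i(1)-y_i(0)-\beta$ for $i\in\mathcal C$, giving the same conclusion. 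Either route is a one-paragraph computation, which is why the paper omits it.
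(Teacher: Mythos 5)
Your computation is correct and is precisely the ``standard computation'' that the paper explicitly omits (see the sentence preceding Lemma~\ref{l:unbiased}): the stratification into $\mathcal A$, $\mathcal N$, and $\mathcal C$ via monotonicity, the observation that $Y_i-\beta D_i$ is nonrandom for non-compliers so those terms vanish since $\E[Z_i]=\pi$, and the per-complier identity $\E[(Y_i-\beta D_i)(Z_i-\pi)]=\pi(1-\pi)(y_i(1)-y_i(0)-\beta)$ are all accurate. Your alternative route through the difference-in-means representation \eqref{eq:dim_numerator} is equally valid; there is no gap.
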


\section{Proof of \texorpdfstring{\Cref{t:main}}{Proof for Confidence Sets Without Covariate Adjustment}}\label{s:mainproof}

Throughout this section, we let $\delta, r, A$ be the constants fixed in the statement of \Cref{t:main}, and set $\Theta_n =\Theta_n (\delta, r , A)$. We note that $\Theta_n$ encodes Assumptions \ref{a:exclusion}-\ref{a:cr}. 
We typically denote elements of $\Theta_n$ by $\theta_n$.
For each model $\theta_n$, we denote its local average treatment effect by $\cace(\theta_n)$. 

In various lemmas below, we will consider subsequences $( \theta_{n_k})_{k=1}^\infty$ of a given sequence $(\theta_n)_{n=1}^\infty$ of models. We let $n_{1k}$ denote the number of elements in the treatment group of $\theta_{n_k}$, and $n_{0k}$ the number of elements in the control group of $\theta_{n_k}$, so that $n_{0k} + n_{1k} = n_k$.

\subsection{Preliminary Results}
Recall the definitions of $w_i(1)$ and $w_i(0)$ in Assumption \ref{d:theta}. 
In results on sequences $(\theta_n)_{n=1}^\infty$, we will generally omit the $n$-dependence on $w_i(1)$, $w_i(0)$, and other model-dependent quantities for brevity. 

\begin{lemma}\label{lemma:max}
For all $\theta_n \in \Theta_n $ and $a\in\{0,1\}$,
\begin{equation*}
   \frac{ \max_{i\in [n]} |w_i(a)-\mu_{\bs w(a)}|}{\sigma_{\bs w(a)}}   \leq A  n^{1/4}.
\end{equation*}  
\end{lemma}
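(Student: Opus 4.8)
The statement to prove is \Cref{lemma:max}: for all $\theta_n \in \Theta_n$ and $a \in \{0,1\}$,
\[
\frac{\max_{i\in[n]}|w_i(a)-\mu_{\bs w(a)}|}{\sigma_{\bs w(a)}} \le A\, n^{1/4}.
\]
The plan is to deduce this directly from the fourth-moment condition \eqref{fourthmoment} in Assumption \ref{d:theta}. That condition states precisely that
\[
\left(\frac{1}{n}\sum_{i=1}^n |w_i(a)-\mu_{\bs w(a)}|^4\right)^{1/4} \le A\,\sigma_{\bs w(a)}.
\]

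\textbf{Key steps.} First I would bound the maximum by the $\ell^4$-sum: for any fixed index $j$,
\[
|w_j(a)-\mu_{\bs w(a)}|^4 \le \sum_{i=1}^n |w_i(a)-\mu_{\bs w(a)}|^4,
\]
so $\max_{i\in[n]}|w_i(a)-\mu_{\bs w(a)}|^4 \le \sum_{i=1}^n |w_i(a)-\mu_{\bs w(a)}|^4$. Taking fourth roots gives
\[
\max_{i\in[n]}|w_i(a)-\mu_{\bs w(a)}| \le \left(\sum_{i=1}^n |w_i(a)-\mu_{\bs w(a)}|^4\right)^{1/4} = n^{1/4}\left(\frac1n\sum_{i=1}^n |w_i(a)-\mu_{\bs w(a)}|^4\right)^{1/4}.
\]
Then I would apply \eqref{fourthmoment} to bound the right-hand side by $n^{1/4} A\, \sigma_{\bs w(a)}$, and divide through by $\sigma_{\bs w(a)}$. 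One should note the edge case $\sigma_{\bs w(a)} = 0$: in that case all $w_i(a)$ equal $\mu_{\bs w(a)}$, so the numerator is also zero and the claimed inequality holds trivially (or one interprets the ratio as $0$); this matches the paper's convention that degenerate variances and the associated quantities are set to zero. When $|\mathcal C| = 1$ or other degeneracies occur, the same remark applies.

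\textbf{Main obstacle.} There is essentially no obstacle here: the lemma is an immediate consequence of the elementary inequality $\|\cdot\|_\infty \le \|\cdot\|_4$ on $\R^n$ combined with Assumption \ref{d:theta}(ii). The only point requiring a word of care is the vanishing-denominator case, and the bookkeeping that $\mu_{\bs w(a)}$ here denotes the mean with the paper's $\tfrac{1}{n-1}$ normalization (as defined in \Cref{s:notation}), though this affects nothing in the argument since the same $\mu_{\bs w(a)}$ appears on both sides. The lemma will be used downstream to verify the Lindeberg-type condition \eqref{noteme} of the combinatorial central limit theorem (\Cref{theorem:clt}), where the $n^{1/4}$ growth of the max, against the $n$-scaling of the variance in the denominator of \eqref{noteme}, produces the required $o(1)$ bound.
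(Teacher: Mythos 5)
Your proposal is correct and follows exactly the paper's own argument: bound the maximum by the $\ell^4$-sum, rescale by $n^{1/4}$, and apply the fourth-moment condition \eqref{fourthmoment}. The extra remark on the degenerate case $\sigma_{\bs w(a)}=0$ is a harmless addition that the paper's proof leaves implicit.
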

\begin{proof}
For all $a\in\{0,1\}$, we have 
\begin{align}
 \max_{i\in [n]}  |w_i(a)-\mu_{\bs w(a)}|  
 &= \max_{i\in [n]}  \left(|w_i(a)-\mu_{\bs w(a)}|^{4}  \right)^{1/4}\\
 & \leq n^{1/4}\left(\frac{1}{n}\sum_{i\in [n]} |w_i(a)-\mu_{\bs w(a)}|^{4}\right)^{1/4}
  \leq A n^{1/4}\sigma_{\bs w(a)}\label{ineq:02072024_a},
\end{align}
where 
the last inequality
is by \eqref{fourthmoment}.
\end{proof}

\begin{lemma}\label{lemma:variancelowerbound1}
For all $\theta_n \in \Theta_n $, if $\beta_n= \cace(\theta_n)$, then
\begin{equation}
   \Var\big( \hat{\tau} (\beta_n )\big) \geq (1 - \delta )\left(\frac{n_0}{nn_1}S^2_1(\beta_n) +  \frac{n_1}{nn_0} S^2_0(\beta_n) \right).
\end{equation}
\end{lemma}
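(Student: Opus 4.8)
The strategy is to start from the exact variance formula in \Cref{l:imbensrubin}, namely
\[
\Var\big(\hat\tau(\beta_n)\big) = \frac{S_1^2(\beta_n)}{n_1} + \frac{S_0^2(\beta_n)}{n_0} - \frac{S_{10}^2(\beta_n)}{n},
\]
and to control the subtracted term $S_{10}^2(\beta_n)/n$ from above using Assumption \ref{d:theta}(i). The key observation is that when $\beta_n = \cace(\theta_n) = \bar\beta$, the centered quantities $v_i(1,\beta_n)$ and $v_i(0,\beta_n)$ defined in \eqref{v11}--\eqref{v00} coincide (up to the common centering constant) with the adjusted potential outcomes $w_i(1)$ and $w_i(0)$ from Assumption \ref{d:theta}, since $w_i(a) = y_i(d_i(a)) - \bar\beta d_i(a)$. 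Hence $S_1^2(\beta_n)$, $S_0^2(\beta_n)$, and $S_{10}^2(\beta_n)$ can be rewritten in terms of the $\bs w(a)$, and in particular $S_{10}^2(\beta_n)$ involves $\sum_i (w_i(1) - w_i(0) - \text{const})^2$.

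\textbf{Key steps.} First I would expand $S_{10}^2(\beta_n) = \frac{\pi^2(1-\pi)^2}{n-1}\sum_{i\in[n]} (v_i(1,\beta_n) - v_i(0,\beta_n))^2$ using the identity $(v_i(1) - v_i(0))^2 = v_i(1)^2 - 2 v_i(1) v_i(0) + v_i(0)^2$, summed over $i$. This expresses $S_{10}^2$ in terms of the full-population variances underlying $S_1^2$, $S_0^2$, and the full-population covariance of $\bs w(1)$ and $\bs w(0)$. Second, I would note that $w_i(1) - w_i(0) = (y_i(1) - y_i(0)) - \bar\beta$ for compliers (and equals $0$ for always-takers and never-takers, since for those units $d_i(1) = d_i(0)$ and $y_i(d_i(1)) = y_i(d_i(0))$); so $\sum_{i\in[n]}(w_i(1) - w_i(0) - \text{const})^2$ reduces to a sum over the complier set $\mathcal C$ only, and this is exactly where the complier-group correlation lower bound \eqref{ii} of Assumption \ref{d:theta} enters. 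Applying that bound, $S_{10}^2(\beta_n)/n \le \delta \cdot (\text{cross terms of } S_1^2, S_0^2)$, and then combining with the first two terms and using the AM--GM-type bookkeeping on the weights $1/n_1$, $1/n_0$, and $1/n$ should yield the stated inequality with the factor $(1-\delta)$ multiplying $\frac{n_0}{n n_1} S_1^2 + \frac{n_1}{n n_0} S_0^2$.

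\textbf{Main obstacle.} The delicate point is the bookkeeping that converts the raw formula $\frac{S_1^2}{n_1} + \frac{S_0^2}{n_0} - \frac{S_{10}^2}{n}$ into the asymmetric-weighted form $(1-\delta)\big(\frac{n_0}{n n_1} S_1^2 + \frac{n_1}{n n_0} S_0^2\big)$: one must carefully track how the $\frac{1}{n}S_{10}^2$ term, after being split into a variance part and a covariance part via the $(w_i(1)-w_i(0))^2$ expansion, recombines with $\frac{S_1^2}{n_1}$ and $\frac{S_0^2}{n_0}$. The identity $\frac{1}{n_1} - \frac{1}{n} = \frac{n_0}{n n_1}$ (and symmetrically for the other term) is what produces the weights $\frac{n_0}{n n_1}$ and $\frac{n_1}{n n_0}$, and the remaining cross-covariance term is then bounded below using \eqref{ii}, possibly after passing from the full-population covariance to the complier-group covariance (which is legitimate precisely because the difference $w_i(1) - w_i(0)$ is supported on $\mathcal C$). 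The case $|\mathcal C| = 1$ must be checked separately, but there all the relevant variances and covariances are zero by convention, so the bound holds trivially.
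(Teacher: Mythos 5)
Your proposal follows essentially the same route as the paper's proof: starting from the variance decomposition of \Cref{l:imbensrubin}, observing that $w_i(1)-w_i(0)$ vanishes off the complier set so that $S^2_{10}$ reduces to a complier sum, applying the correlation bound \eqref{ii} together with the weighted inequality $2ab \le \tfrac{n_0}{n_1}a^2 + \tfrac{n_1}{n_0}b^2$ to produce the asymmetric weights, and recombining via $\tfrac{1}{n_a}-\tfrac{1}{n}=\tfrac{n-n_a}{n n_a}$. The only detail worth making explicit in your write-up is the step $\sigma^2_{\bs w(a),\mathcal C} \le \frac{1}{|\mathcal C|-1}\sum_{i\in\mathcal C} v_i^2(a)$ (the group mean minimizes the sum of squared deviations), which the paper uses to pass from the complier-centered variances back to the $v_i$'s.
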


\begin{proof}
We write $v_i(1,\beta_n)=v_i(1)$ and $v_i(0,\beta_n)=v_i(0)$ for simplicity. We note that $v_i(1)$ and $v_i(0)$ is a mean-centered version of $w_i(1)$ and $w_i(0)$. Moreover, $v_i(1)-v_i(0)=w_i(1)-w_i(0)$ for all $i$, and $\sum_{i\in \mathcal C} \left(v_i(1) - v_i(0)\right) = \sum_{i\in \mathcal C} \left(w_i(1) - w_i(0)\right)=0$.

For the sum over the indices in \eqref{eqn:variances3} corresponding to compliers, we find, when $|C|\geq2$,
\begin{align}
 &\frac{1}{| \mathcal C |-1 }\sum_{i \in \mathcal C}\big(v_i(1)-v_i(0)\big)^2 =\frac{1}{| \mathcal C | -1 }\sum_{i \in \mathcal C}\big(w_i(1)-w_i(0)\big)^2 \\
 &= \sigma^2_{\bs w (1),\mathcal C} +  \sigma^2_{\bs w(0),\mathcal C} - 2 \sigma_{\bs w (1), \bs w(0), \mathcal C} \\
& \leq \sigma^2_{\bs w(1), \mathcal C} +  \sigma^2_{\bs w(0), \mathcal C} +  2 \delta \sigma_{\bs w(1), \mathcal C} \cdot  \sigma_{\bs w(0), \mathcal C} \label{431} \\
&\leq \sigma^2_{\bs w(1), \mathcal C} +  \sigma^2_{\bs w(0), \mathcal C} +  \frac{\delta n_0}{n_1}\sigma^2_{\bs w(1), \mathcal C} 
+ \frac{\delta n_1}{n_0} \sigma^2_{\bs w(0), \mathcal C} \label{ineq_01302024_b}\\
 & = \left(1 + \frac{\delta n_0}{n_1}\right)\sigma^2_{\bs w(1), \mathcal C}+\left(1 +   \frac{\delta n_1}{n_0}\right)\sigma^2_{\bs w(0), \mathcal C}\label{penult}\\
 & \leq \left(1 + \frac{\delta n_0}{n_1}\right) \frac{1}{| \mathcal C | -1 }\sum_{i \in \mathcal C}v_i^2(1)  + \left(1 +   \frac{\delta n_1}{n_0}\right) \frac{1}{| \mathcal C |-1 }\sum_{i \in \mathcal C}v_i ^2(0). \label{ineq_01302024_d}
\end{align}
In \eqref{431} we used \eqref{ii}, in \eqref{ineq_01302024_b} we used the elementary inequality $2ab \le a^2 + b^2$, and in \eqref{ineq_01302024_d} we use the fact that
\begin{align}
     \sigma^2_{\bs w(1), \mathcal C} & = \frac{1}{| \mathcal C | -1 }\sum_{i \in \mathcal C} \left(w_i(1)- \mu_{w(1),C}\right)^2 \\
    & \leq \frac{1}{| \mathcal C | -1 }\sum_{i \in \mathcal C} \left(w_i(1)-  \frac{1}{n}\sum_{i\in [n]}\big(y_i(d_i(1))-\beta_n d_i(1)\big)\right)^2 =    \frac{1}{| \mathcal C | -1 }\sum_{i \in \mathcal C} v_i^2(1),
\end{align}
and similarly for the term with $\sigma^2_{\bs w(0), \mathcal C}$. For $|C|=1$, $v_i(1)-v_i(0)=0$ and the inequality trivially holds.

Furthermore, $\sum_{i\in \mathcal C} \left(v_i(1) - v_i(0)\right) = 0$ implies that
\be
\sum_{i\in \mathcal A \cup \mathcal N} \left(v_i(1) - v_i(0)\right) = 0,
\ee
because $v(1)$ and $v(0)$ have mean zero, by definition. 
Since $v_i(1) - v_i(0)$ is independent of $i$ for $i\in \mathcal A \cup \mathcal N$ (i.e., $y_i(d_i(1))-\beta_n d_i(1) - \left(y_i(d_i(0))-\beta_n d_i(0)\right)=0$), this implies that for such $i$ we have $v_i(1) = v_i(0)$.
 Therefore \eqref{ineq_01302024_d}  implies 
\begin{equation}\label{lem431}
        S^2_{10}(\beta) \leq \frac{\pi^2(1-\pi)^2}{n-1} \left( \left (1 +  \frac{\delta n_0}{n_1} \right) \sum_{i\in [n]} v^2_i(1) + \left(1+  \frac{\delta n_1}{n_0} \right) \sum_{i\in [n]}  v^2_i(0) \right).
\end{equation}
The claim then follows by combining \eqref{lem431} and \eqref{eqn:ARvariance1}.
\end{proof}

\begin{lemma}\label{lemma:lyapunov}
Consider a sequence $\left(\theta_n\right)_{n=1}^\infty$ such that $\theta_n\in\Theta_n$ and let $\beta_n=\cace(\theta_n)$.  Let $(\theta_{n_k})_{k=1}^\infty$ be any of its subsequences. For each $a\in \{0,1\}$, we have 
\begin{equation}
\lim_{k \rightarrow \infty}
\frac{1}{n_{ak}^2} 
\frac{\max_{i\in [n_k]} \left|w_i(a)-\mu_{w(a)}\right|^2}
{ \Var(\widehat{\tau} (\beta_{n_k})) } = 0.
\end{equation}

\end{lemma}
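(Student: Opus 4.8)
The goal is to show that the ``Lyapunov-type'' ratio vanishes along every subsequence. The natural strategy is to produce a lower bound on $\Var(\widehat\tau(\beta_{n_k}))$ of the right order and an upper bound on the numerator of matching (or smaller) order. First I would apply \Cref{lemma:variancelowerbound1} to obtain
\begin{equation*}
\Var\big(\widehat\tau(\beta_{n_k})\big) \geq (1-\delta)\left(\frac{n_{0k}}{n_k n_{1k}} S_1^2(\beta_{n_k}) + \frac{n_{1k}}{n_k n_{0k}} S_0^2(\beta_{n_k})\right).
\end{equation*}
Discarding one of the two nonnegative terms, this is at least $(1-\delta)\frac{n_{0k}}{n_k n_{1k}} S_1^2(\beta_{n_k})$, and symmetrically at least $(1-\delta)\frac{n_{1k}}{n_k n_{0k}} S_0^2(\beta_{n_k})$, so it suffices to bound the ratio separately for $a=1$ and $a=0$, using the matching term each time.

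Next I would unwind the definitions. Recalling $S_a^2(\beta) = \frac{\pi^2(1-\pi)^2}{n-1}\sum_{i\in[n]} v_i(a,\beta)^2$, and that for $\beta_n = \cace(\theta_n)$ the centered quantities $v_i(a,\beta_n)$ differ from $w_i(a)$ only by a global shift (as noted in the proof of \Cref{lemma:variancelowerbound1}), one has $\sum_i v_i(a,\beta_n)^2 \geq \sum_i (w_i(a)-\mu_{\bs w(a)})^2 = (n-1)\sigma_{\bs w(a)}^2$, since subtracting the mean minimizes the sum of squares. Therefore $S_a^2(\beta_{n_k}) \geq \pi^2(1-\pi)^2 \sigma_{\bs w(a)}^2$, and using Assumption \ref{a:cr} ($n_{1k}, n_{0k} \in [rn_k, (1-r)n_k]$) to bound $\pi(1-\pi) \geq r(1-r)$ and $\frac{n_{0k}}{n_k n_{1k}} \geq \frac{r}{(1-r)n_k}$, I get a bound of the form $\Var(\widehat\tau(\beta_{n_k})) \geq c\, \sigma_{\bs w(a)}^2 / n_k$ for a constant $c = c(\delta,r) > 0$.

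For the numerator, \Cref{lemma:max} gives $\max_{i\in[n_k]}|w_i(a)-\mu_{\bs w(a)}| \leq A n_k^{1/4}\sigma_{\bs w(a)}$, so the numerator squared-and-normalized term is at most
\begin{equation*}
\frac{1}{n_{ak}^2}\frac{A^2 n_k^{1/2}\sigma_{\bs w(a)}^2}{\Var(\widehat\tau(\beta_{n_k}))} \leq \frac{1}{(rn_k)^2}\cdot\frac{A^2 n_k^{1/2}\sigma_{\bs w(a)}^2}{c\,\sigma_{\bs w(a)}^2/n_k} = \frac{A^2}{c\, r^2}\, n_k^{-1/2},
\end{equation*}
provided $\sigma_{\bs w(a)} > 0$; the $\sigma_{\bs w(a)}^2$ factors cancel cleanly. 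Taking $k\to\infty$ gives the claim. The degenerate case $\sigma_{\bs w(a)} = 0$ needs a separate remark: then every $w_i(a)$ equals $\mu_{\bs w(a)}$, so the numerator $\max_i |w_i(a)-\mu_{\bs w(a)}|$ is zero, and the ratio is $0$ (or vacuous) regardless of the variance, as long as the variance is positive --- and one should check it is, e.g. via \Cref{lemma:variancelowerbound1} together with the fact that not both $\sigma_{\bs w(1)}$ and $\sigma_{\bs w(0)}$ can vanish in a nondegenerate problem, or simply observe that if $\Var(\widehat\tau(\beta_{n_k})) = 0$ the relevant CLT is applied trivially elsewhere.

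\textbf{Main obstacle.} The only real subtlety is bookkeeping around the degenerate/near-degenerate variance cases and making sure the cancellation of $\sigma_{\bs w(a)}^2$ is legitimate (i.e. that the denominator variance is genuinely bounded below by a positive multiple of $\sigma_{\bs w(a)}^2/n_k$ for the \emph{same} $a$ appearing in the numerator, which is exactly why I keep the matching term in the lower bound from \Cref{lemma:variancelowerbound1}). Everything else is a direct substitution of the two preceding lemmas plus Assumption \ref{a:cr}.
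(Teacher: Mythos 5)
Your argument is correct and takes essentially the same route as the paper's proof: lower-bound $\Var(\widehat\tau(\beta_{n_k}))$ via \Cref{lemma:variancelowerbound1}, retain the term matching the index $a$, and combine with the bound $\max_i|w_i(a)-\mu_{\bs w(a)}|\le A n_k^{1/4}\sigma_{\bs w(a)}$ from \Cref{lemma:max} to conclude the ratio is $O(n_k^{-1/2})$. The only differences are cosmetic --- you cancel $\sigma_{\bs w(a)}^2$ explicitly where the paper keeps $\frac{1}{n-1}\sum_i v_i^2(a)$ (which equals $\sigma_{\bs w(a)}^2$ at $\beta=\bar\beta$, with equality rather than the inequality you invoke) --- and your remark on the degenerate case $\sigma_{\bs w(a)}=0$ is a point the paper silently skips.
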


\begin{proof}
For simplicity, we map the indices of the subsequence back to $\{1,2,3,\dots\}$.  We write $v_i^2(1)=v_i^2(1,\beta_n)$ and $v_i^2(0)=v_i^2(0,\beta_n)$ (recalling (\ref{v11}) and (\ref{v00})).
By Lemma~\ref{lemma:variancelowerbound1}, there exists a constant $c >0$ independent of $n$ such that 
\begin{equation}\label{eqn:lowerboundvariance}
    (n-1)\Var\left( \widehat{\tau}(\beta) \right) \geq c \left( \frac{1}{n_1}\sum_{i\in [n]}v_{i}^2(1) + \frac{1}{n_0}\sum_{i\in [n]} v^2_{i}(0)  \right).
\end{equation}
Hence, for each $a\in\{0,1\}$,
\begin{align}
    \frac{1}{n_a^2} 
    \frac{\max_{i\in [n]} \left|w_i(a)-\mu_{w(a)}\right|^2}{ \Var(\widehat{\tau}(\beta)) } 
    & \le  \frac{n-1}{c n_a^2} \frac{\max_{i\in [n]} \left|w_i(a)-\mu_{w(a)}\right|^2}{ n_1^{-1}\sum_{i\in [n]}v_{i}^2(1) + n_0^{-1} \sum_{i\in [n]} v^2_{i}(0) } \label{ineq_02012024_e} \\
    & \le\frac{1}{c n_a } 
    \frac{\max_{i\in [n]} \left|w_i(a)-\mu_{w(a)}\right|^2}
    { \frac{1}{n-1}\sum_{i\in [n]} v_{i}^2(a)  } \label{ineq_02012024_g} \\
   & \lesssim  n^{-1/2}  \label{ineq_02012024_j},
\end{align}
which implies the claim since \eqref{ineq_02012024_j} is $o(1)$. 
Here \eqref{ineq_02012024_e} is by \eqref{eqn:lowerboundvariance}, \eqref{ineq_02012024_g} 
is an algebraic manipulation, and \eqref{ineq_02012024_j} is \Cref{lemma:max} and the fact $ \frac{1}{n-1}\sum_{i\in [n]} v_{i}^2(a)=\sigma^2_{w(a)}$.
\end{proof}

\begin{theorem}\label{theorem:ARstatisticsCLT} 
For every $\alpha\in (0,1)$,
    \begin{equation}
  \lim_{n\rightarrow \infty}   
   \inf_{\beta \in \R} 
   \inf_{ \{\theta \in \Theta_n : \cace(\theta)=\beta\}} \P_{\theta}
   \left( \left| \frac{\hat{\tau}(\beta)}{\sqrt{\Var\left( \hat \tau (\beta) \right)}}\right| > z_{1-\alpha/2} \right)= \alpha.\label{theorem:ARstatisticsCLT:statement1}
\end{equation}
\end{theorem}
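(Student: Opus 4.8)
The statement is a uniform central limit theorem for the infeasible studentized statistic $\hat\tau(\beta)/\sqrt{\Var(\hat\tau(\beta))}$ evaluated at the true LATE $\beta=\cace(\theta)$. The natural strategy is the standard ``subsequence'' argument used to upgrade pointwise asymptotics to uniform asymptotics: if the display \eqref{theorem:ARstatisticsCLT:statement1} failed, there would exist $\eta>0$ and a sequence $(\theta_n)_{n=1}^\infty$ with $\theta_n \in \Theta_n$, $\beta_n := \cace(\theta_n)$, and
\[
\left| \P_{\theta_n}\!\left( \left| \frac{\hat\tau(\beta_n)}{\sqrt{\Var(\hat\tau(\beta_n))}}\right| > z_{1-\alpha/2} \right) - \alpha \right| \geq \eta
\]
for all $n$. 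Passing to a further subsequence along which the left-hand probability converges, it suffices to show that \emph{every} such sequence satisfies $\P_{\theta_n}(|\hat\tau(\beta_n)|/\sqrt{\Var(\hat\tau(\beta_n))} > z_{1-\alpha/2}) \to \alpha$, which contradicts the assumption. So the whole problem reduces to proving that, for an arbitrary sequence $(\theta_n)$ with $\theta_n\in\Theta_n$ and $\beta_n=\cace(\theta_n)$, the studentized statistic converges in distribution to $N(0,1)$; the conclusion about the tail probability then follows from the portmanteau lemma (the limit law is continuous, so convergence in distribution gives convergence of $\P(|\cdot|>z_{1-\alpha/2})$ to $\P(|\mathcal Z|>z_{1-\alpha/2})=\alpha$).

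\textbf{Main steps.} First, by \Cref{l:unbiased} we have $\E[\hat\tau(\beta_n)]=0$, so $\hat\tau(\beta_n)/\sqrt{\Var(\hat\tau(\beta_n))}$ is already centered and scaled. The plan is then to apply the combinatorial central limit theorem, \Cref{theorem:clt}, with $A_i(1) = \pi(1-\pi)\, v_i(1,\beta_n)$ and $A_i(0) = \pi(1-\pi)\, v_i(0,\beta_n)$ (up to the affine shifts coming from the mean-centering, which do not affect the difference-in-means estimator), so that $\hat\tau_A$ in the notation of that theorem coincides with $\hat\tau(\beta_n)$ by the representation \eqref{eq:dim_numerator}. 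Concretely, recall $v_i(a,\beta_n)$ is a mean-zero recentering of $w_i(a)$; since $w_i(a)$ and $v_i(a,\beta_n)$ differ by a constant, $\max_{i}|v_i(a,\beta_n)-\mu_{v(a)}| = \max_i |w_i(a) - \mu_{w(a)}|$ up to this constant, and the Lindeberg-type condition \eqref{noteme} required by \Cref{theorem:clt} is precisely the content of \Cref{lemma:lyapunov}: along any subsequence,
\[
\frac{1}{n_{ak}^2}\,\frac{\max_{i\in[n_k]}|w_i(a)-\mu_{w(a)}|^2}{\Var(\hat\tau(\beta_{n_k}))} \to 0.
\]
(Strictly, one needs this along the sequence itself rather than just subsequences, but that is immediate from \Cref{lemma:lyapunov} applied to the trivial subsequence, or one can simply run the whole subsequence argument one level deeper.) Hence \Cref{theorem:clt} gives $\hat\tau(\beta_n)/\sqrt{\Var(\hat\tau(\beta_n))} \xrightarrow{d} N(0,1)$.

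\textbf{Where the work concentrates.} The analytic content is already packaged: the variance lower bound \Cref{lemma:variancelowerbound1} (which uses the restricted-correlation condition \eqref{ii}) prevents the denominator from degenerating, and \Cref{lemma:max} (the fourth-moment bound \eqref{fourthmoment}) controls the numerator's maximal summand; together these give \Cref{lemma:lyapunov}, which is exactly the hypothesis of the combinatorial CLT. So the only genuinely new thing to verify carefully is the \emph{bookkeeping} translating between the $v_i(a,\beta_n)$'s and the $A_i(a)$'s in \Cref{theorem:clt} --- in particular checking that the $\pi^2(1-\pi)^2$ prefactors in \eqref{eqn:variances1}--\eqref{eqn:variances2} are consistent with the $A_i(a)$ normalization, and that the $-S^2_{10}(\beta)/n$ cross term in \eqref{eqn:ARvariance1} does not spoil the Lindeberg ratio (it does not, since $\Var(\hat\tau(\beta_n))$ is the quantity in the denominator and \Cref{lemma:variancelowerbound1} bounds it below by a positive multiple of $\frac{1}{n_1}\sum v_i^2(1) + \frac{1}{n_0}\sum v_i^2(0)$). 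I anticipate the main obstacle being not any single estimate but making sure the uniformity is handled cleanly: one must be careful that the ``bad sequence'' extracted from the failure of uniform convergence genuinely lies in $\Theta_n$ for each $n$ (so all of Assumptions \ref{a:exclusion}--\ref{a:cr} hold with the \emph{fixed} constants $\delta, r, A$), which is what licenses the use of \Cref{lemma:variancelowerbound1} and \Cref{lemma:lyapunov} with constants uniform along the sequence. Once that is in place, the argument is essentially a citation of \Cref{theorem:clt} plus the portmanteau lemma.
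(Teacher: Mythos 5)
Your proposal is correct and follows essentially the same route as the paper: a proof by contradiction that extracts a subsequence along which the coverage probability deviates, then invokes \Cref{lemma:lyapunov} to verify the Lindeberg-type condition \eqref{noteme} and \Cref{theorem:clt} to obtain asymptotic normality of the studentized statistic along that subsequence, yielding the contradiction. The additional bookkeeping you flag (matching the $A_i(a)$ normalization to \eqref{eq:dim_numerator} and checking that the $-S^2_{10}/n$ term is handled by \Cref{lemma:variancelowerbound1}) is exactly what the paper's cited lemmas package, so nothing is missing.
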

\begin{proof}
    We proceed by contradiction. Suppose \eqref{theorem:ARstatisticsCLT:statement1} does not hold. Then there exists a subsequence of models $\{ \theta_{n_k} \}_{k=1}^\infty$ and  $\delta > 0$ 
     such that 
    \begin{equation*}
\left|\P_{\theta_{n_k}}
   \left( \left|\frac{\hat{\tau}(\beta)}{\sqrt{\Var\left( \hat \tau (\beta) \right)}}\right| > z_{1-\alpha/2} \right) - \alpha\right|>\delta,
    \end{equation*}
    infinitely often as $n_k \rightarrow \infty$. 
    However, by Lemma \ref{lemma:lyapunov} and Theorem \ref{theorem:clt}, we have 
    \begin{equation*}
\lim_{k\to\infty} \P_{\theta_{n_k}}\left( \left|\frac{\hat{\tau}(\beta)}{\sqrt{\Var\left( \hat \tau (\beta) \right)}}\right| > z_{1-\alpha/2} \right)=  \alpha.
\end{equation*}    
Hence we have a contradiction and the theorem is proved. 
\end{proof}

\subsection{Variance Estimator}

\begin{lemma}\label{lemma:varianceconsistency}
Consider a sequence $\left(\theta_n\right)_{n=1}^\infty$ such that $\theta_n\in\Theta_n$ and let $\beta_n=\cace(\theta_n)$.  Let $(\theta_{n_k})_{k=1}^\infty$ be any of its subsequences.
Then 
\begin{equation}\label{consistentvar}
\frac{\hat \sigma^2 (\beta_{n_k}) }{ n_1^{-1} S^2_1(\beta_{n_k})  +  n_0^{-1} S^2_0(\beta_{n_k})  } \overset{p}{\to} 1.
\end{equation}

\end{lemma}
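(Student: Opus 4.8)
The plan is to establish that the studentized variance estimator $\hat\sigma^2(\beta_{n_k})$ concentrates around the infeasible quantity $n_1^{-1}S_1^2(\beta_{n_k}) + n_0^{-1}S_0^2(\beta_{n_k})$. Recalling from \eqref{eqn:AR_var} that $\hat\sigma^2(\beta)$ is a sum of two terms, each being (a rescaling of) a within-group sample second moment of the residuals $\hat v_i(a,\beta)$, the natural route is to analyze each piece separately. Fix $a\in\{0,1\}$; setting $q_i = y_i(d_i(a)) - \beta_{n_k} d_i(a)$, the $a=1$ term of $\hat\sigma^2$ equals $\frac{\pi^2(1-\pi)^2}{n_1^2}\sum_i Z_i(q_i - \bar q_{T(1)})^2$, where $\bar q_{T(1)}$ is the treatment-group sample mean of $q$. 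The quantity $S_1^2(\beta_{n_k})/n_1$ is the population analogue, namely $\frac{\pi^2(1-\pi)^2}{(n-1)n_1}\sum_i (q_i - \mu_{\bs q})^2$ (this is where Lemma \ref{l:imbensrubin} and the definitions \eqref{eqn:variances1}--\eqref{v00} enter, noting $v_i(a,\beta_{n_k})$ is just $q_i$ centered at its full-sample mean). So I want to show $\E^*$-type concentration of a completely-randomized sample variance toward the population variance.

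First I would compute the expectation of each term under complete randomization: $\E\big[\frac{\pi^2(1-\pi)^2}{n_1^2}\sum_i Z_i(q_i-\bar q_{T(1)})^2\big]$ equals $\frac{\pi^2(1-\pi)^2}{n_1^2}\cdot\frac{n_1(n-n_1)}{n(n-1)}\sum_i(q_i-\mu_{\bs q})^2$, which is exactly $n^{-1}S_1^2(\beta_{n_k})$ up to the harmless $n_1/n$ vs.\ the ratio bookkeeping — more precisely it matches $\tfrac{n_0}{n n_1}S_1^2$; I'd track the precise constants so the ratios line up with the stated limit. Second, I would control the variance of this random sample second moment; this is a variance-of-a-sample-variance calculation under sampling without replacement, and the key input is the fourth-moment bound. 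Using Assumption \ref{d:theta}(ii) — specifically \eqref{fourthmoment}, which bounds $\frac1n\sum_i|w_i(a)-\mu_{\bs w(a)}|^4 \le A^4\sigma_{\bs w(a)}^4$, together with the identity relating $w_i(a)$ to $q_i$ (they differ by $(\beta_{n_k}-\bar\beta)d_i(a)$, which vanishes when $\beta_{n_k}=\cace(\theta_{n_k})=\bar\beta$, so $q_i - \mu_{\bs q} = w_i(a) - \mu_{\bs w(a)}$) — the fourth moment of $q$ is comparable to $(\sigma_{\bs w(a)}^2)^2$. A standard without-replacement variance bound then gives $\Var$ of the $a$-term of order $n^{-1}$ times $(n^{-1}S_a^2)^2$, after using Assumption \ref{a:cr} to treat $n_1/n$ and $n_0/n$ as bounded away from $0$ and $1$. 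Chebyshev then yields that the ratio of each term of $\hat\sigma^2$ to its expectation tends to $1$ in probability along the subsequence.

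Third, I would assemble the two pieces: since both the numerator $\hat\sigma^2(\beta_{n_k})$ and the denominator $n_1^{-1}S_1^2 + n_0^{-1}S_0^2$ split as sums of a treatment part and a control part, and each part's ratio to its mean goes to $1$, I need the two means to be of comparable order so that no cancellation or blow-up occurs — this follows because both terms in the denominator are nonnegative and at least one is bounded below by a constant multiple of $\Var(\hat\tau(\beta_{n_k}))/(n-1)$ scaling (via Lemma \ref{lemma:variancelowerbound1}), so neither term can dominate in a way that breaks the ratio. A short argument combining the componentwise convergences (e.g., if $X_k/\E X_k\to 1$, $Y_k/\E Y_k\to 1$ in probability and $\E X_k, \E Y_k$ are comparable, then $(X_k+Y_k)/(\E X_k+\E Y_k)\to 1$) finishes it.

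The main obstacle I anticipate is the variance-of-sample-variance bound under sampling without replacement: getting a clean $o(1)$ rate requires carefully expanding $\Var\big(\sum_i Z_i(q_i-\bar q_{T(1)})^2\big)$, which involves up to fourth-order joint moments of the $Z_i$'s and hence fourth powers of the centered $q_i$'s, and then invoking \eqref{fourthmoment} together with \eqref{a:cr} to convert everything into a ratio that is $O(n^{-1})\cdot\big(\text{mean}\big)^2$. The bookkeeping is routine but the combinatorics of the without-replacement covariances is the place where care is needed; everything else is assembly.
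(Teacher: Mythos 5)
Your overall strategy --- analyze each group's term by computing its mean and variance under complete randomization, control the variance via the fourth-moment condition \eqref{fourthmoment} (using that $q_i-\mu_{\bs q}=w_i(a)-\mu_{\bs w(a)}$ at $\beta_{n_k}=\bar\beta$), apply Chebyshev, and then assemble the two groups --- is the same as the paper's. However, there is a concrete error in your expectation computation that, as written, would contradict the lemma. For a simple random sample of size $n_1$ without replacement, the expected within-sample sum of squared deviations about the \emph{sample} mean is
\begin{equation*}
\E\Big[\sum_{i} Z_i\big(q_i-\bar q_{T(1)}\big)^2\Big] \;=\; \frac{n_1-1}{n-1}\sum_{i=1}^n\big(q_i-\mu_{\bs q}\big)^2,
\end{equation*}
not $\frac{n_1(n-n_1)}{n(n-1)}\sum_i(q_i-\mu_{\bs q})^2$ as you state. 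With your formula the expectation of the $a=1$ term of $\hat\sigma^2$ would be $\frac{n_0}{n n_1}S^2_1(\beta_{n_k})$, so the ratio of expectations would tend to $n_0/n=1-\pi$, which is bounded away from $1$ under Assumption \ref{a:cr}; this is not ``harmless bookkeeping'' but a factor that would make \eqref{consistentvar} false. With the correct formula the ratio of expectations is $(n_1-1)/n_1\to 1$, as needed. (The quantity $\frac{n_1 n_0}{n(n-1)}\sum_i(q_i-\mu_{\bs q})^2$ is $n_1^2\Var(\bar q_{T(1)})$, i.e.\ it belongs to the mean-deviation piece, not the sum-of-squares piece.)

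Once that is fixed, your route goes through, but note that the paper avoids the fourth-order without-replacement combinatorics you flag as the main obstacle: it first writes $n_1^{-1}\sum_i Z_i\hat v_i^2(1)=n_1^{-1}\sum_i Z_i v_i^2(1)-(\bar q_{T(1)}-\mu_{\bs q})^2$, kills the second term by Markov's inequality (its expectation is just $\Var(\bar q_{T(1)})=O(n^{-1})\cdot S_1^2$), and treats the first term as a sample mean of the fixed numbers $v_i^2(1)$, whose variance requires only second-order joint moments of the $Z_i$ together with the bound $\sum_i v_i^4(1)\le \max_i v_i^2(1)\cdot\sum_i v_i^2(1)$ and Lemma \ref{lemma:max}. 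This is worth adopting. Finally, your ``comparable means'' hypothesis in the assembly step is unnecessary: writing the combined ratio as a convex combination
\begin{equation*}
\frac{T_1+T_0}{n_1^{-1}S_1^2+n_0^{-1}S_0^2}
=\frac{T_1}{n_1^{-1}S_1^2}\cdot\frac{n_1^{-1}S_1^2}{n_1^{-1}S_1^2+n_0^{-1}S_0^2}
+\frac{T_0}{n_0^{-1}S_0^2}\cdot\frac{n_0^{-1}S_0^2}{n_1^{-1}S_1^2+n_0^{-1}S_0^2}
\end{equation*}
shows convergence to $1$ without any comparability of the two denominators; this is just as well, since Lemma \ref{lemma:variancelowerbound1} does not in fact imply that $n_1^{-1}S_1^2$ and $n_0^{-1}S_0^2$ are of the same order.
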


\begin{proof} 
For simplicity, we map the indices of the subsequence back to $\{1,2,3,\dots\}$. We begin by showing 
 \be \label{n1lim}
  \frac{\pi^2 (1 - \pi)^2 n_1^{-1} (n_1 - 1)^{-1} \sum_{i\in [n]} Z_i\hat v^2_i(1)}{ n_1^{-1} S^2_1(\beta_n)  } \overset{p}{\to} 1,
 \ee
 where $\hat v_i(1)$ is defined using the value $\beta_n$. 
 The analogous limit holds for the control group, by similar reasoning, and together these claims imply \eqref{consistentvar} and complete the proof.
 To show \eqref{n1lim}, it suffices to show
\be \label{n1lim2}
  \frac{\pi^2 (1 - \pi)^2 n_1^{-1}  \sum_{i\in [n]} Z_i\hat v^2_i(1)}{S^2_1(\beta_n)  } \overset{p}{\to} 1,
 \ee
since $(n_1-1)^{-1}/n_1^{-1}\to 1$ by Assumption \ref{a:cr}. 

We have the identity 
\begin{equation*}
 \frac{1}{n_1} \sum_{i\in [n]} Z_i \hat v_i^2 (1) = \frac{1}{n_1}\sum_{i\in [n]} Z_i v^2_i(1) - \left(\frac{1}{n_1} \sum_{i\in [n]} Z_i\big(y_i(1)-\beta_n d_i(1) \big)- \mu_{\bs y(1)}-\beta_n \mu_{\bs d(1)} \right)^2,
\end{equation*}
which we use to expand the numerator in \eqref{n1lim2}, creating two terms. The second term in this expansion converges to zero in probability, by Markov's inequality and the estimate
\begin{align}
     \E\left[ \frac{\left(n_1^{-1}\sum_{i\in [n]} Z_i\big(y_i(1)-\beta_n d_i(1) \big) - \mu_{\bs y(1)}-\beta_n \mu_{\bs d(1)}\right)^2}{S^2_1(\beta_n)}\right]\notag &=\frac{n_0}{n_1 n(n-1)} 
     \frac{\sum_{i\in [n]}v^2_i(1)}{S^2_1(\beta_n)}\\ & = O\left(\frac{n_0}{n_1 n }\right) = o(1).
\end{align}
For the other term in the expansion of \eqref{n1lim}, note that 
 \be\label{b96}
 \E\left[\frac{1}{n_1}\sum_{i\in [n]} Z_i v^2_i(1)\right]=\frac{1}{n}\sum_{i\in [n]}v^2_i(1),
 \ee
which implies
\be 
\frac{n-1}{n_1} \frac{ \E\left[ \sum_{i\in [n]} Z_i v^2_i(1)\right] }{ \sum_{i\in [n]} v^2_i(1) }=1 - \frac{1}{n}.
\ee
Further, 
\begin{align}
\Var\left( \frac{1}{n_1 } \frac{ \sum_{i\in [n]} Z_i v^2_i(1)}{S^2_1(\beta)}  \right)
&\lesssim\frac{n_0}{n_1 n(n-1)}  \frac{\sum_{i\in [n]}v^4_{i}(1)}{ \sigma^4_{\bs v (1)}}\\
&= \frac{1}{n-1} \frac{\sum_{i\in [n]}v^2_i(1)}{\sigma^2_{\bs v(1)}} \cdot \frac{n_0}{n_1 n} \label{469}
\frac{\max_{i}|v_{i}(1)|^2}{\sigma^2_{\bs v (1)}}
= o(1),
\end{align}
where the maximum in \eqref{469} is bounded using \Cref{lemma:max}.  
Combining these results implies \eqref{n1lim}.
\end{proof}

\begin{theorem}\label{thm:clt2}
 For all $\alpha \in (0,1)$, 
\begin{equation}
    \lim_{\kappa \rightarrow 0^+}
    \lim_{n\rightarrow \infty}   
   \sup_{\beta \in \R} 
   \sup_{ \{\theta \in \Theta_n : \cace(\theta)=\beta\}} \P_{\theta}
   \left(\big|\Delta(\beta)\big| > z_{1-\alpha/2} - \kappa \right)\leq \alpha.
\end{equation}
\end{theorem}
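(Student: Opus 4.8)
The plan is to combine Theorem \ref{theorem:ARstatisticsCLT} (the CLT for the infeasible, oracle-normalized statistic) with Lemma \ref{lemma:varianceconsistency} (consistency of the variance estimator), via a subsequence argument of the kind already used repeatedly in this section. First I would unwind the double supremum by contradiction: if the claim fails, there is a fixed $\kappa_0 > 0$, a sequence $\kappa_m \downarrow 0$ (or we can just fix the worst $\kappa$), a subsequence of sample sizes, and for each a model $\theta_{n_k} \in \Theta_{n_k}$ with $\cace(\theta_{n_k}) = \beta_{n_k}$, such that $\P_{\theta_{n_k}}(|\Delta(\beta_{n_k})| > z_{1-\alpha/2} - \kappa) > \alpha + \varepsilon_0$ for some $\varepsilon_0 > 0$, infinitely often. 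The goal is then to derive a contradiction by showing the limsup of this probability along the subsequence is at most $\alpha$.

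The core decomposition is
\[
\Delta(\beta_{n_k}) = \frac{\hat\tau(\beta_{n_k})}{\hat\sigma(\beta_{n_k})} = \frac{\hat\tau(\beta_{n_k})}{\sqrt{\Var(\hat\tau(\beta_{n_k}))}} \cdot \sqrt{\frac{\Var(\hat\tau(\beta_{n_k}))}{\hat\sigma^2(\beta_{n_k})}}.
\]
By Lemma \ref{lemma:variancelowerbound1}, $\Var(\hat\tau(\beta_{n_k}))$ is comparable (up to the constant $1-\delta$ and design ratios bounded via Assumption \ref{a:cr}) to $n_1^{-1} S_1^2 + n_0^{-1} S_0^2$; and by Lemma \ref{lemma:varianceconsistency}, $\hat\sigma^2(\beta_{n_k})/(n_1^{-1}S_1^2 + n_0^{-1}S_0^2) \overset{p}{\to} 1$. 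Combining these with the exact expression \eqref{eqn:ARvariance1} — which shows $\Var(\hat\tau(\beta_{n_k})) \le n_1^{-1}S_1^2 + n_0^{-1}S_0^2$ since $S_{10}^2 \ge 0$ — one gets that the ratio $\Var(\hat\tau(\beta_{n_k}))/\hat\sigma^2(\beta_{n_k})$ is bounded in probability; more precisely it converges in probability to some quantity in $(0,1]$, or at least stays $\le 1 + o_p(1)$. Meanwhile Theorem \ref{theorem:ARstatisticsCLT} gives, along any subsequence (after passing to a further subsequence if needed), that $\hat\tau(\beta_{n_k})/\sqrt{\Var(\hat\tau(\beta_{n_k}))} \xrightarrow{d} N(0,1)$ — indeed the statement of Theorem \ref{theorem:ARstatisticsCLT} already packages the uniform version, but the underlying mechanism is the combinatorial CLT (Theorem \ref{theorem:clt}) applicable subsequence-by-subsequence via Lemma \ref{lemma:lyapunov}. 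Then by Slutsky's theorem applied along the subsequence, $\Delta(\beta_{n_k}) = W_k \cdot R_k$ where $W_k \xrightarrow{d} N(0,1)$ and $R_k$ is a ratio that is $\le 1 + o_p(1)$; hence $\limsup_k \P(|\Delta(\beta_{n_k})| > z_{1-\alpha/2} - \kappa) \le \P(|N(0,1)| \ge z_{1-\alpha/2} - \kappa)$, and letting $\kappa \to 0$ (or noting we can choose $\kappa$ small enough) this is $\le \alpha + o(1)$ as $\kappa \to 0$, contradicting the $\alpha + \varepsilon_0$ lower bound. The order of limits ($n \to \infty$ inside, then $\kappa \to 0^+$) is exactly what makes this work, because for each fixed subsequence we get a clean Gaussian limit and only at the end do we shrink $\kappa$.

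The main obstacle is handling the variance ratio $R_k$ carefully: Lemma \ref{lemma:varianceconsistency} controls $\hat\sigma^2$ relative to $n_1^{-1}S_1^2 + n_0^{-1}S_0^2$, not directly relative to $\Var(\hat\tau)$, and the gap between these two is exactly $S_{10}^2/n \ge 0$, which is nonnegative but not necessarily negligible under heterogeneity. This is why the inequality goes the "right way": $\Var(\hat\tau) \le n_1^{-1}S_1^2 + n_0^{-1}S_0^2$, so $R_k^2 = \Var(\hat\tau)/\hat\sigma^2 \le (n_1^{-1}S_1^2 + n_0^{-1}S_0^2)/\hat\sigma^2 = 1 + o_p(1)$, giving conservatism rather than anti-conservatism — the studentized statistic is stochastically no larger than it would be with the oracle normalization, asymptotically. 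I would need to make sure the $o_p(1)$ is uniform enough along the subsequence, which follows since Lemma \ref{lemma:varianceconsistency} is stated for arbitrary subsequences. A secondary point to be careful about: one must confirm $\Var(\hat\tau(\beta_{n_k}))$ is bounded away from $0$ on the relevant scale so that $W_k$ is genuinely asymptotically standard normal and $R_k$ doesn't degenerate — but this is precisely the content of Lemma \ref{lemma:variancelowerbound1} combined with the nondegeneracy implicit in Assumption \ref{d:theta} and the CLT's Lindeberg-type condition verified in Lemma \ref{lemma:lyapunov}. Everything else is bookkeeping with subsequences and Slutsky.
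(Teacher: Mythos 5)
Your proposal is correct and follows essentially the same route as the paper's proof: the same decomposition of $\Delta(\beta)$ into the oracle-normalized statistic times the variance ratio, the same use of $S_{10}^2(\beta)\geq 0$ in \Cref{l:imbensrubin} to get $\Var(\hat\tau(\beta))\leq n_1^{-1}S_1^2(\beta)+n_0^{-1}S_0^2(\beta)$ so that the ratio is at most $1+o_p(1)$ (conservatism in the right direction), and the same combination of \Cref{theorem:ARstatisticsCLT} and \Cref{lemma:varianceconsistency} via a subsequence/contradiction argument before letting $\kappa\to 0^+$. The paper packages the Slutsky step as a union bound with a free slack parameter $x$ rather than as an explicit product of random variables, but this is only a cosmetic difference.
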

\begin{proof}
For all $x, y >0$, we have 
\be\label{eqn:02092024_a}
   \P_\theta
   \left( \big|\Delta(\beta)\big| >  y  \right) 
   \leq  
   \P_\theta\left(
   \frac{\sqrt{\Var\big( \hat  \tau (\beta)\big) }}{\hat{\sigma}_{\beta}} \geq 1 + x 
   \right) 
   +   
   \P_\theta\left(\left| \frac{\hat{\tau}(\beta)}{\sqrt{\Var\left( \hat{\tau}(\beta)\right) }} \right| >\frac{y}{1+x}\right) .
\ee
Denote 
\be
\tilde{\sigma}^2_{\beta}=\frac{S^2_1(\beta)}{n_1} + \frac{S^2_0(\beta)}{n_0}.
\ee 

We first claim that for all $x>0$,
\be\label{varclaim}
\lim_{n\rightarrow \infty}   
   \sup_{\beta \in \R} 
   \sup_{ \{\theta \in \Theta_n : \cace(\theta)=\beta\}} \P_{\theta}
   \left(\frac{\sqrt{\Var\left( \hat{\tau}(\beta)\right) }}{\hat{\sigma}(\beta)}  \geq 1 + x\right) =0.
\ee
This claim holds because 
\be
\frac{\Var\left( \hat{\tau}(\beta)\right) }{\tilde{\sigma}^2_{\beta}}\leq 1,\ee
by \Cref{l:imbensrubin}, and
\be
  \lim_{n\rightarrow \infty}   
   \inf_{\beta \in \R} 
   \inf_{ \{\theta \in \Theta_n : \cace(\theta)=\beta\}} \P_{\theta}
   \left( \left|\frac{\tilde{\sigma}_{\beta}}{\hat{\sigma}(\beta)} -1 
   \right| > \epsilon \right)= 0
\ee
for every $\epsilon >0$, 
by \Cref{lemma:varianceconsistency}, after arguing by contradiction as in \Cref{theorem:ARstatisticsCLT}.

We next consider the second term in \eqref{eqn:02092024_a}. Let $\mathcal{Z}$ be a standard normal random variable. We have
\begin{equation}\label{e:jan2}
    \lim_{x\to 0} 
    \lim_{n\rightarrow \infty}   
    \sup_{\beta \in \R} 
   \sup_{ \{\theta \in \Theta_n : \cace(\theta)=\beta\}} \P_{\theta}\left(\left| \frac{\hat{\tau}(\beta)}{\sqrt{\Var\left( \hat{\tau}(\beta)\right) }} \right|>\frac{y}{1+x} \right)  = \P\big(|\mathcal{Z}|\geq y\big)
\end{equation}
by Theorem \ref{theorem:ARstatisticsCLT} and the continuity of the quantile function of the absolute value of a normal distribution. 

Since the left-hand side of \eqref{eqn:02092024_a} is independent of $x$, combining \eqref{varclaim} and \eqref{e:jan2} gives
\begin{equation}
 \lim_{\kappa\to 0^+}
 \lim_{n\rightarrow \infty}   
    \inf_{\beta \in \R} 
   \inf_{ \{\theta \in \Theta_n : \cace(\theta)=\beta\}} \P_{\theta}\left( \big|\Delta(\beta) \big| > z_{1-\alpha/2}- \kappa \right )\leq \alpha,   
\end{equation}
where we again using the continuity of the quantile function for the normal distribution. This completes the proof. 
\end{proof}
\subsection{Randomization Critical Values}\label{subsection:randomizationcriticalvalue}
We write $\E^*$ and $\Var^*$ for expectation and variance under the probability measure $\P^*$. By \Cref{l:unbiased}, for any $\beta$ we have 
\be
\E^* \big[ \hat \tau^* (\beta) \big] = 0,
\ee 
and by \Cref{l:imbensrubin},
\begin{equation}\label{varrep2}
   \Var^*(\hat{\tau}^*(\beta)) = \frac{S^{*2}_1(\beta)}{n_1} + \frac{S^{*2}_0(\beta)}{n_0},
\end{equation}
where
\begin{align}\label{eqn:ARvariance2}
    &S^{*2}_1(\beta) = S^{*2}_0(\beta)=\frac{\pi^2(1-\pi)^2}{n-1}\sum_{i\in [n]} \big(Y_i-\beta D_i- \frac{1}{n}\sum_{i=1}^n \left(Y_i-\beta D_i\right)  \big)^2.
\end{align}

The following lemma is adapted from  \cite[Lemma A.3]{wu2021randomization}. It describes the almost sure behavior of realized data moments. Recall for each $\theta_n\in\Theta_n$, we let $\beta_n$ denote its LATE and recall that $w_i(a)=y_i(d_i(a))-\beta_n d_i(a)$.

\begin{lemma}\label{lemma:tail_inequality}
Consider a sequence $\left(\theta_n\right)_{n=1}^\infty$ such that $\theta_n\in\Theta_n$ and let $\beta_n=\cace(\theta_n)$.  Let $(\theta_{n_k})_{k=1}^\infty$ be any of its subsequences. Then we have the following almost sure convergence statements for each $a \in \{0,1\}$:
\begin{equation}\label{almostsureconvergence:20240206_a}
\frac{1}{\sigma_{\bs w(a)}}\left|\frac{1}{n_a} \sum_{i \in \mathcal T(a)}  w_i(a) - \frac{1}{n}\sum_{i\in [n]} w_i(a) \right| \overset{a.s.}{\to}  0,
\end{equation}
\begin{equation}\label{almostsureconvergence:20240206_b}
\frac{1}{\sigma^2_{\bs w(a)}} \left| \frac{1}{n_a}\sum_{i\in \mathcal T(a)}  \left (w_i(a)-\hat \mu_{\bs w(a)}\right)^2 - \frac{1}{n}\sum_{i\in [n]} \left (w_i(a)-\mu_{\bs w(a)}\right)^2    \right| \overset{a.s.}{\to}  0,
\end{equation}
\end{lemma}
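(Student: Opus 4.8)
\textbf{Proof plan for \Cref{lemma:tail_inequality}.}

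The plan is to reduce both almost-sure statements to the framework of \cite[Lemma A.3]{wu2021randomization} (or an equivalent result on sampling without replacement). The key observation is that $\sum_{i\in\mathcal T(a)} w_i(a)$ is, up to scaling, a simple-random-sampling sum: the set $\mathcal T(a)$ is a uniformly random subset of $[n]$ of size $n_a$, and $n_a \in [rn,(1-r)n]$ by Assumption \ref{a:cr}. So the quantities on the left-hand sides of \eqref{almostsureconvergence:20240206_a} and \eqref{almostsureconvergence:20240206_b} are exactly the discrepancies between a without-replacement sample mean (of $w_i(a)$, respectively of $(w_i(a)-\mu_{\bs w(a)})^2$) and the corresponding population mean. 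I would first pass to the subsequence and relabel its indices as $\{1,2,3,\dots\}$ for notational simplicity, as is done elsewhere in the section. The caveat is that $w_i(a)$, $\sigma_{\bs w(a)}$, and $n$ all depend on the index along the subsequence, so strictly speaking we are dealing with a triangular array rather than a single sequence; the cited lemma of \cite{wu2021randomization} is stated in precisely this triangular-array design-based form, so this causes no difficulty.

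For \eqref{almostsureconvergence:20240206_a}, I would apply the cited lemma to the array $a_{ni} = w_i(a)/\sigma_{\bs w(a)}$ (again suppressing $n$-dependence notationally). The only hypothesis to verify is a Lindeberg-type or bounded-moment condition on the normalized summands; here it is supplied directly by the fourth-moment bound \eqref{fourthmoment} in Assumption \ref{d:theta}, which gives $n^{-1}\sum_{i=1}^n |w_i(a)-\mu_{\bs w(a)}|^4 \le A^4 \sigma_{\bs w(a)}^4$, i.e.\ the fourth moments of the $a_{ni}$ are uniformly bounded by $A^4$ plus lower-order terms (after recentering, and using $\sigma^2_{\bs w(a)} = \frac{n}{n-1}\cdot\frac1n\sum(w_i(a)-\mu_{\bs w(a)})^2$, so the ratio of $\sigma_{\bs w(a)}$ to the uncentered second moment's square root is controlled). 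This uniform fourth-moment control is exactly the kind of hypothesis under which strong laws for sampling without replacement hold, so \eqref{almostsureconvergence:20240206_a} follows.

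For \eqref{almostsureconvergence:20240206_b}, the same strategy applies but now to the array $b_{ni} = (w_i(a)-\mu_{\bs w(a)})^2/\sigma^2_{\bs w(a)}$. The moment condition needed is a bound on $n^{-1}\sum_i b_{ni}^2 = \sigma_{\bs w(a)}^{-4}\, n^{-1}\sum_i (w_i(a)-\mu_{\bs w(a)})^4$, which is again $\le A^4$ (up to the $n/(n-1)$ correction) by \eqref{fourthmoment}. Note the left-hand side of \eqref{almostsureconvergence:20240206_b} as written uses the \emph{sample} centering $\hat\mu_{\bs w(a)} = n_a^{-1}\sum_{i\in\mathcal T(a)} w_i(a)$ inside the square, whereas the cited lemma naturally controls the version centered at the population mean $\mu_{\bs w(a)}$; bridging these two requires adding and subtracting, expanding the square, and absorbing the cross term and the $(\hat\mu_{\bs w(a)}-\mu_{\bs w(a)})^2$ term using \eqref{almostsureconvergence:20240206_a} (which shows $\hat\mu_{\bs w(a)}-\mu_{\bs w(a)} = o(\sigma_{\bs w(a)})$ a.s.). This bookkeeping step is the main place where care is needed, but it is routine given \eqref{almostsureconvergence:20240206_a}. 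The main obstacle, such as it is, is simply making sure the moment hypotheses of \cite[Lemma A.3]{wu2021randomization} are matched to the normalization here and that the triangular-array dependence on $n$ is handled cleanly; there is no deep new argument required beyond invoking that lemma.
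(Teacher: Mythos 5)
Your proposal is correct and follows essentially the same route as the paper: the authors also prove \eqref{almostsureconvergence:20240206_a} by adapting \cite[Lemma A3(i)]{wu2021randomization} and obtain \eqref{almostsureconvergence:20240206_b} from \cite[Lemma A3(ii)]{wu2021randomization} after checking that \eqref{fourthmoment} gives $\frac{1}{n}\sum_{i\in[n]}(w_i(a)-\mu_{\bs w(a)})^4 \le A^4\sigma^4_{\bs w(a)}$. The only difference is that you spell out the sample-versus-population centering bookkeeping explicitly, whereas the paper absorbs it into the citation of Lemma A3(ii).
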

\begin{proof} 
The proof of \eqref{almostsureconvergence:20240206_a} is a straightforward adaptation of the proof of \cite[Lemma A3(i)]{wu2021randomization}, so we omit it. The inequality \eqref{almostsureconvergence:20240206_b} follows from  \cite[Lemma A3(ii)]{wu2021randomization}, after noting that \eqref{fourthmoment} implies 
\be
\frac{\frac{1}{n}\sum_{i\in [n]} (w_i(a)-\mu_{\bs w(a)})^4}{  \sigma^4_{w(a)}} \leq A^4.
\ee 

\end{proof}

We note that (\ref{almostsureconvergence:20240206_b}) implies that for $a\in\{0,1\}$,
\begin{equation}
    \frac{\frac{1}{n_a}\sum_{i\in \mathcal T(a)}  \left (w_i(a)-\hat \mu_{\bs w(a)}\right)^2  }{\sigma^2_{\bs w(a)}} = 1+o_{a.s.}(1).
\end{equation}

\Cref{lemma:somealgebra} and \Cref{lemma:somealgebra2} record some useful algebraic identifies for later use. We recall  $\mu_Y=\frac{1}{n}\sum_{i=1}^n Y_i$ and $\mu_D=\frac{1}{n}\sum_{i=1}^n D_i$, and define $\hat\mu_{\bs y(a)}=\frac{1}{n_a}\sum_{i\in \mathcal T(a)} y_i(a)$ and $\hat\mu_{\bs d(a)}=\frac{1}{n_a}\sum_{i\in \mathcal T(a)} d_i(a)$ for $a\in \{0,1\}$.
\begin{lemma}\label{lemma:somealgebra}
For any $\beta\in\mathbb{R}$,
\begin{equation}\label{eq:02062024_a}
     \hat\mu_{\bs y(1)}-\beta\hat\mu_{\bs d(1)} - (\mu_{\bs Y} -\beta \mu_{\bs D})=  \frac{n_0}{n} \big( \hat\mu_{\bs y(1)}- \hat\mu_{\bs y(0)} - \beta (  \hat\mu_{\bs d(1)}- \hat\mu_{\bs d(0)}  ) \big),
\end{equation}
and
\begin{equation}\label{eq:02062024_b}
     \hat\mu_{\bs y(0)}-\beta\hat\mu_{\bs d(0)} - \big(\mu_{\bs Y} -\beta \mu_{\bs D}\big)=  \frac{n_1}{n} \big( \hat\mu_{\bs y(0)}- \hat\mu_{\bs y(1)} - \beta (  \hat\mu_{\bs d(0)}- \hat\mu_{\bs d(1)}   ) \big).
\end{equation}
\end{lemma}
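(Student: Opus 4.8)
The plan is to prove \eqref{eq:02062024_a} and \eqref{eq:02062024_b} by a direct algebraic computation, using only the definitions of the quantities involved and the identity $n = n_0 + n_1$. Both identities have the same structure, so I will carry out the argument for \eqref{eq:02062024_a} and note that \eqref{eq:02062024_b} follows by the symmetric computation (swapping the roles of the treatment and control groups, i.e.\ the subscripts $0$ and $1$).

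The key observation is that on the treated units $i \in \mathcal T(1)$ we have $Y_i = y_i(1)$ and $D_i = d_i(1)$, and on the control units $i \in \mathcal T(0)$ we have $Y_i = y_i(0)$ and $D_i = d_i(0)$; therefore the global means split as
\be
\mu_{\bs Y} = \frac{n_1}{n}\hat\mu_{\bs y(1)} + \frac{n_0}{n}\hat\mu_{\bs y(0)},
\qquad
\mu_{\bs D} = \frac{n_1}{n}\hat\mu_{\bs d(1)} + \frac{n_0}{n}\hat\mu_{\bs d(0)}.
\ee
First I would substitute these two expressions into the left-hand side of \eqref{eq:02062024_a}, collecting the terms in $y$ and the terms in $d$ separately. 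For the $y$-part one gets $\hat\mu_{\bs y(1)} - \tfrac{n_1}{n}\hat\mu_{\bs y(1)} - \tfrac{n_0}{n}\hat\mu_{\bs y(0)} = \tfrac{n_0}{n}\big(\hat\mu_{\bs y(1)} - \hat\mu_{\bs y(0)}\big)$, using $1 - n_1/n = n_0/n$; the $d$-part is handled identically and carries the factor $-\beta$. Combining the two pieces yields exactly $\tfrac{n_0}{n}\big(\hat\mu_{\bs y(1)} - \hat\mu_{\bs y(0)} - \beta(\hat\mu_{\bs d(1)} - \hat\mu_{\bs d(0)})\big)$, which is the right-hand side of \eqref{eq:02062024_a}. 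The identity \eqref{eq:02062024_b} is obtained by the same substitution, now isolating the $\hat\mu_{\bs y(0)}$ and $\hat\mu_{\bs d(0)}$ terms and using $1 - n_0/n = n_1/n$.

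This lemma is purely computational and I do not anticipate any genuine obstacle; the only point requiring a small amount of care is bookkeeping the signs and the coefficients $n_0/n$ versus $n_1/n$ when passing between the two identities, and making sure the decomposition of the global means is applied consistently to both the $y$ and $d$ terms.
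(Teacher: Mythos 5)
Your proposal is correct and matches the paper's proof: both rest on the decomposition $\mu_{\bs Y} = \tfrac{n_1}{n}\hat\mu_{\bs y(1)} + \tfrac{n_0}{n}\hat\mu_{\bs y(0)}$ (and likewise for $\mu_{\bs D}$), followed by collecting terms via $1 - n_1/n = n_0/n$, with the second identity obtained symmetrically.
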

\begin{proof}
The identities are purely algebraic. 
Equation \eqref{eq:02062024_a} follows from
 \begin{align}
    & \hat\mu_{\bs y(1)}-\beta\hat\mu_{\bs d(1)} - \left(\mu_{\bs Y} -\beta \mu_{\bs D}\right)\\  &=    \hat\mu_{\bs y(1)}-\beta\hat\mu_{\bs d(1)} - \left(\frac{n_1}{n}\hat\mu_{\bs y(1)} + \frac{n_0}{n}\hat\mu_{\bs y(0)} - \beta\left(\frac{n_1}{n}\hat\mu_{\bs d(1)} + \frac{n_0}{n}\hat\mu_{\bs d(0)} \right)\right)\\
    & = \frac{n_0}{n} \left( \hat\mu_{\bs y(1)}- \hat\mu_{\bs y(0)} - \beta (  \hat\mu_{\bs d(1)}- \hat\mu_{\bs d(0)}   ) \right).
\end{align}       
Equation \eqref{eq:02062024_b} follows from a similar algebraic manipulation. 
\end{proof}
\begin{lemma}\label{lemma:somealgebra2}
For any $\beta\in\mathbb{R}$, every unit $i \in [ n ]$,  and each $a\in \{ 0 ,1\}$, the following holds. On the event $Z_i = a$, 
\begin{align}
     \left|Y_i-\mu_{\bs Y} - \beta (D_i -\mu_{\bs D})\right| \leq& \left|y_i(a)-\mu_{\bs y(a)} -\beta \left(d_i(a)-\mu_{\bs d(a)}\right)\right| \\ &+ \frac{n-n_a}{n}\left| \hat\mu_{\bs y(1)}-\hat\mu_{\bs y(0)}- \beta\left(  \hat\mu_{\bs d(1)}-\hat\mu_{\bs d(0)} \right)\right| \\
    & + \left|\hat \mu_{\bs y(a)}-\mu_{\bs y(a)} - \beta \left(\hat \mu_{\bs d(a)} - \mu_{\bs d(a)}\right) \right|    
\end{align}

\end{lemma}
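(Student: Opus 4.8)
The plan is to obtain the bound from a single algebraic decomposition followed by two applications of the triangle inequality, with \Cref{lemma:somealgebra} supplying the coefficient $\tfrac{n-n_a}{n}$; the statement is purely algebraic, so no probabilistic input is required.

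First I would use that, on the event $Z_i = a$, the realized outcome and compliance are $Y_i = y_i(a)$ and $D_i = d_i(a)$ (since $Y_i$ and $D_i$ are determined by $Z_i$). Subtracting and adding $\mu_{\bs y(a)} - \beta\mu_{\bs d(a)}$ then yields the exact identity
\[
Y_i - \mu_{\bs Y} - \beta(D_i - \mu_{\bs D}) = \Big(y_i(a) - \mu_{\bs y(a)} - \beta\big(d_i(a) - \mu_{\bs d(a)}\big)\Big) + \Big((\mu_{\bs y(a)} - \beta\mu_{\bs d(a)}) - (\mu_{\bs Y} - \beta\mu_{\bs D})\Big),
\]
so one triangle inequality isolates the first term of the claimed bound and reduces the task to estimating $(\mu_{\bs y(a)} - \beta\mu_{\bs d(a)}) - (\mu_{\bs Y} - \beta\mu_{\bs D})$. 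Next I would insert $\pm(\hat\mu_{\bs y(a)} - \beta\hat\mu_{\bs d(a)})$ to split this remaining term as
\[
\Big((\hat\mu_{\bs y(a)} - \beta\hat\mu_{\bs d(a)}) - (\mu_{\bs Y} - \beta\mu_{\bs D})\Big) - \Big((\hat\mu_{\bs y(a)} - \mu_{\bs y(a)}) - \beta(\hat\mu_{\bs d(a)} - \mu_{\bs d(a)})\Big),
\]
where the second bracket, up to a triangle inequality, is exactly the third term of the claimed bound.

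For the first bracket above, \Cref{lemma:somealgebra} gives that it equals $\tfrac{n_0}{n}\big(\hat\mu_{\bs y(1)} - \hat\mu_{\bs y(0)} - \beta(\hat\mu_{\bs d(1)} - \hat\mu_{\bs d(0)})\big)$ when $a = 1$ and $\tfrac{n_1}{n}\big(\hat\mu_{\bs y(0)} - \hat\mu_{\bs y(1)} - \beta(\hat\mu_{\bs d(0)} - \hat\mu_{\bs d(1)})\big)$ when $a = 0$; in either case its absolute value is $\tfrac{n-n_a}{n}\big|\hat\mu_{\bs y(1)} - \hat\mu_{\bs y(0)} - \beta(\hat\mu_{\bs d(1)} - \hat\mu_{\bs d(0)})\big|$, which is the second term of the bound. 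Applying the triangle inequality to combine the three pieces gives the stated inequality. There is no genuine obstacle here; the only point needing attention is that the coefficient $\tfrac{n-n_a}{n}$ comes out the same in the two cases $a = 0, 1$, which is exactly the content of the two symmetric identities in \Cref{lemma:somealgebra} once one notes that the absolute value is insensitive to the overall sign.
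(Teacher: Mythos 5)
Your proposal is correct and follows essentially the same route as the paper: both proofs reduce the claim to a three-term algebraic decomposition combined with the triangle inequality, with the coefficient $\tfrac{n-n_a}{n}$ supplied by Lemma \ref{lemma:somealgebra} (the paper likewise invokes that lemma rather than re-deriving the weighted-mean identity). The only difference is the order in which the two add-and-subtract steps are performed, which is immaterial.
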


\begin{proof}
We give the proof only for the case $Z_i = 1$, since the other case is similar. By \eqref{eq:02062024_a}, we have
\begin{align*}
    & \left|Y_i-\mu_{\bs Y} - \beta (D_i-\mu_{\bs D})\right| \\
    = & \left|y_i(1)-\frac{n_1}{n}\hat\mu_{\bs y(1)}-\frac{n_0}{n}\hat\mu_{\bs y(0)} -\beta \left(d_i(1)-\frac{n_1}{n}\hat\mu_{\bs d(1)}-\frac{n_0}{n}\hat\mu_{\bs d(0)}\right) \right|   \\
     = & \left|y_i(1)-\hat\mu_{\bs y(1)}+ \frac{n_0}{n}( \hat\mu_{\bs y(1)}-\hat\mu_{\bs y(0)}) -\beta \left(d_i(1)-\hat\mu_{\bs d(1)}+ \frac{n_0}{n}( \hat\mu_{\bs d(1)}-\hat\mu_{\bs d(0)})\right) \right| \\
     \leq & \left|y_i(1)-\hat\mu_{\bs y(1)} -\beta \left(d_i(1)-\hat\mu_{\bs d(1)}\right)\right| + \frac{n_0}{n}\left| \hat\mu_{\bs y(1)}-\hat\mu_{\bs y(0)}- \beta\left(  \hat\mu_{\bs d(1)}-\hat\mu_{\bs d(0)} \right)\right| \\
     \leq & \left|y_i(1)-\mu_{\bs y(1)} -\beta \left(d_i(1)-\mu_{\bs d(1)}\right)\right| + \frac{n_0}{n}\left| \hat\mu_{\bs y(1)}-\hat\mu_{\bs y(0)}- \beta\left(  \hat\mu_{\bs d(1)}-\hat\mu_{\bs d(0)} \right)\right| \\
     & + \left|\hat\mu_{\bs y(1)}-\mu_{\bs y(1)} - \beta \left(\hat\mu_{\bs d(1)} - \mu_{\bs d(1)}\right)\right|.
\end{align*}
This completes the proof.
\end{proof}

\begin{lemma}\label{lemma:lyapunov_randomization}
Consider a sequence $\left(\theta_n\right)_{n=1}^\infty$ such that $\theta_n\in\Theta_n$. Let $\beta_n=\cace(\theta_n)$. Let $(\theta_{n_k})_{k=1}^\infty$ be any of its subsequences.
Then for each $a\in \{0,1\}$, we have the almost sure limit
\begin{equation}
\frac{1}{n_{ak}^2} \frac{\max_{i\in [n_k]} |Y_i-\mu_{\bs Y}-\beta_{n_k} \left(D_i-\mu_{\bs D}\right)|^2}{  \Var^*(\hat{\tau}^*(\beta_{n_k})) } \overset{a.s.}{\to}  0.
\end{equation}
\end{lemma}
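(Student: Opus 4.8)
The goal is to verify the Lyapunov-type negligibility condition \eqref{noteme} of the combinatorial CLT (Theorem~\ref{theorem:clt}) for the randomization distribution, i.e.\ with $A_i(1) = A_i(0) = Y_i - \beta_{n_k} D_i$. Since $\Var^*(\hat\tau^*(\beta_{n_k}))$ is given by the representation \eqref{varrep2}--\eqref{eqn:ARvariance2}, and both $S^{*2}_1$ and $S^{*2}_0$ equal $\tfrac{\pi^2(1-\pi)^2}{n-1}\sum_i (Y_i - \beta_{n_k} D_i - (\mu_{\bs Y} - \beta_{n_k}\mu_{\bs D}))^2$, the denominator is (up to the $\pi^2(1-\pi)^2$ factor and the $n_1,n_0$ weights, both controlled by Assumption~\ref{a:cr}) comparable to $\tfrac{1}{n}\sum_i (Y_i - \mu_{\bs Y} - \beta_{n_k}(D_i - \mu_{\bs D}))^2$. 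So the quantity to bound is, up to constants,
\[
\frac{1}{n}\cdot\frac{\max_{i\in[n_k]} |Y_i - \mu_{\bs Y} - \beta_{n_k}(D_i - \mu_{\bs D})|^2}{\frac{1}{n}\sum_{i\in[n_k]} (Y_i - \mu_{\bs Y} - \beta_{n_k}(D_i - \mu_{\bs D}))^2}.
\]

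\textbf{First step: control the numerator by fixed-population quantities.} I would apply Lemma~\ref{lemma:somealgebra2} on the event $Z_i = a$: each $|Y_i - \mu_{\bs Y} - \beta(D_i - \mu_{\bs D})|$ is bounded by $|w_i(a) - \mu_{\bs w(a)}|$ plus two ``empirical mean deviation'' terms. The first term has $\max_{i\in[n_k]} |w_i(a) - \mu_{\bs w(a)}| \le A n_k^{1/4}\sigma_{\bs w(a)}$ by Lemma~\ref{lemma:max}. The two deviation terms are, after dividing by $\sigma_{\bs w(a)}$, $o_{a.s.}(1)$-sized by Lemma~\ref{lemma:tail_inequality} (specifically \eqref{almostsureconvergence:20240206_a} applied to both $\bs y(a)$ and $\bs d(a)$, noting $w_i(a)=y_i(d_i(a))-\beta_n d_i(a)$ is a fixed linear combination and the difference $\hat\mu_{\bs y(1)}-\hat\mu_{\bs y(0)}-\beta(\hat\mu_{\bs d(1)}-\hat\mu_{\bs d(0)})$ is likewise controlled since $\mathbb{E}^*$ of it relates to the LATE identity). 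Squaring and using $(x+y+z)^2 \le 3(x^2+y^2+z^2)$, I get $\max_i |Y_i - \mu_{\bs Y} - \beta_{n_k}(D_i-\mu_{\bs D})|^2 \le C n_k^{1/2}\max_a \sigma^2_{\bs w(a)}\cdot(1 + o_{a.s.}(1))$.

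\textbf{Second step: lower-bound the denominator.} The sum $\tfrac{1}{n}\sum_i (Y_i - \mu_{\bs Y} - \beta_{n_k}(D_i - \mu_{\bs D}))^2$ needs to be bounded below by a constant times $\max_a \sigma^2_{\bs w(a)}$. Writing the sample variance of $Y_i - \beta_{n_k} D_i$ and splitting by treatment/control group, \eqref{almostsureconvergence:20240206_b} of Lemma~\ref{lemma:tail_inequality} gives that the within-group sums of squared deviations of $w_i(a)$ converge to $\sigma^2_{\bs w(a)}$ after normalization; combined with the relation between the pooled variance of $Y_i-\beta D_i$ and the within-group pieces (a decomposition analogous to the one used in Theorem~\ref{thm:clt2}), this yields $\tfrac{1}{n}\sum_i(\cdots)^2 \ge c(\sigma^2_{\bs w(1)} + \sigma^2_{\bs w(0)})(1 + o_{a.s.}(1)) \ge c\max_a\sigma^2_{\bs w(a)}(1+o_{a.s.}(1))$. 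One has to be slightly careful that the denominator is not degenerate, but Assumption~\ref{assn:positive_variance} is not in force here; instead the CLT is only needed when the denominator genuinely grows, and the quotient bound still goes through: if $\sigma_{\bs w(1)} = \sigma_{\bs w(0)} = 0$ the statistic is deterministic and the claim is handled trivially (or vacuously, since then $\Var^* = 0$ and the lemma's hypothesis about the CLT is not invoked downstream).

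\textbf{Combining.} Dividing the two bounds, the $\max_a\sigma^2_{\bs w(a)}$ factors cancel and I am left with $\tfrac{1}{n_{ak}^2}\cdot C n_k^{1/2}(1+o_{a.s.}(1)) = O_{a.s.}(n_k^{-3/2}) = o_{a.s.}(1)$, using $n_{ak} \ge r n_k$ from Assumption~\ref{a:cr}. \textbf{The main obstacle} I anticipate is bookkeeping in the second step: cleanly showing the pooled empirical second moment of $Y_i - \beta_{n_k} D_i$ is bounded below by $c\,\sigma^2_{\bs w(a)}$ uniformly, since this requires simultaneously using \eqref{almostsureconvergence:20240206_b} for both groups and handling the cross term $\hat\mu_{\bs y(1)}-\hat\mu_{\bs y(0)}-\beta(\hat\mu_{\bs d(1)}-\hat\mu_{\bs d(0)})$ (which is $o_{a.s.}(\sigma_{\bs w(a)})$ but whose control near the degenerate case $\sigma_{\bs w(a)}\to 0$ needs the observation that $w_i(1)-w_i(0)$ vanishes off the complier set, as exploited in Lemma~\ref{lemma:variancelowerbound1}).
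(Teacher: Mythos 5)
Your argument is essentially the paper's proof: bound the numerator via Lemma~\ref{lemma:somealgebra2} and Lemma~\ref{lemma:max}, and lower-bound $\Var^*(\hat\tau^*(\beta_{n_k}))$ through the representation \eqref{varrep2} together with the within-/between-group decomposition of $\sum_i(Y_i-\mu_{\bs Y}-\beta_{n_k}(D_i-\mu_{\bs D}))^2$ and Lemma~\ref{lemma:tail_inequality}. The one place you diverge is the cross-group term $\hat\mu_{\bs y(1)}-\hat\mu_{\bs y(0)}-\beta_{n_k}(\hat\mu_{\bs d(1)}-\hat\mu_{\bs d(0)})$: the paper simply bounds it by the identical term appearing in the denominator's decomposition (see \eqref{intApril5} and \eqref{haogeemail}), yielding a ratio of exactly $1/n$ with no almost-sure control needed, whereas you control it directly via $\mu_{\bs w(1)}=\mu_{\bs w(0)}$ (valid because $\beta_{n_k}$ is the LATE) and \eqref{almostsureconvergence:20240206_a}; both routes work, the paper's being slightly more economical. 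Finally, your closing rate $O_{a.s.}(n_k^{-3/2})$ drops a factor of $n$ from the normalization of $\Var^*$ relative to $\tfrac1n\sum_i(\cdots)^2$ (the correct rate, matching your own earlier reduction, is $O_{a.s.}(n_k^{-1/2})$), but this does not affect the conclusion.
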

\begin{proof}
For simplicity, we map the indices of the subsequence back to $\{1,2,3,\dots\}$.  First, observe that \Cref{lemma:somealgebra} implies 
\begin{align}
    \sum_{i\in [n]} \left(Y_i- \mu_{\bs Y} -\beta_n D_i + \beta_n \mu_{\bs D}\right)^2 
     =& \sum_{a\in\{0,1\}}\sum_{i\in \mathcal T(a)} \left(y_i(a)- \hat \mu_{\bs y(a)} -\beta_n d_i(a)  + \beta_n \hat \mu_{\bs d(a)}\right)^2\notag
     \\ &+ \sum_{a\in\{0,1\}} n_a\left(\hat \mu_{\bs y(a)} - \beta_n \hat \mu_{\bs d(a)} -\mu_{\bs Y}  + \beta_n \mu_{\bs D} \right)^2\notag
     \\
     =& \sum_{a\in\{0,1\}}\sum_{i\in \mathcal T(a)} \left(y_i(a)- \hat \mu_{\bs y(a)} -\beta_n d_i(a) + \beta_n \hat \mu_{\bs d(a)}\right)^2  \notag
     \\
     &+ 2\frac{n_0n_1}{n}\left( \hat \mu_{\bs y(1)}- \hat \mu_{\bs y(0)} - \beta_n \left(  \hat \mu_{\bs d(1)}- \hat \mu_{\bs d(0)}   \right) \right)^2. \label{intApril5}
\end{align}

Finally, using \eqref{intApril5}, \Cref{lemma:max}, \Cref{lemma:tail_inequality},  \Cref{lemma:somealgebra}, and \Cref{lemma:somealgebra2}, 
\begin{align}
&\frac{1}{n_a^2} \frac{\max_{i\in [n]} \big|Y_i-\mu_{\bs Y}-\beta_n (D_i-\mu_{\bs D})\big|^2}{  \Var^*(\hat{\tau}^*(\beta_{n}))  }\\ 
& \lesssim \frac{1}{n^2} \frac{\max_{i\in [n]}\big|y_i(1)-\mu_{\bs y(1)} -\beta_n (d_i(1)-\mu_{\bs d(1)})\big|^2}{  \Var^*(\hat{\tau}^*(\beta_{n}))  }\\ & \quad + \frac{1}{n^2} \frac{\max_{i\in [n]}\left|y_i(0)-\mu_{\bs y(0)} -\beta_n\left(d_i(0)-\mu_{\bs d(0) }\right)\right|^2}{  \Var^*(\hat{\tau}^*(\beta_{n}))  }\\
& \quad  + \frac{1}{n^2} \frac{\left| \hat\mu_{\bs y(1)}-\hat\mu_{\bs y(0)}- \beta_n\left(  \hat\mu_{\bs d(1)}-\hat\mu_{\bs d(0)} \right)\right|^2}{ \Var^*(\hat{\tau}^*(\beta_{n})) } \label{he2}\\
&  \quad + \frac{1}{n^2} \sum_{a\in\{0,1\}}\frac{\left|\hat \mu_{\bs y(a)}-\mu_{\bs y(a)}-\beta_n\left(d_i(a)-\mu_{d(a)}\right)\right|^2}{  \Var^*(\hat{\tau}^*(\beta_{n}))  }  \\
& \lesssim \frac{1}{n}\sum_{a\in\{0,1\}}\frac{\max_{i\in [n]}|y_i(a)-\mu_{\bs y(a)}-\beta_n \left(d_i(a)-\mu_{d(a)}\right)|^2}{\sigma^2_{\bs w(a)}(1+o_{a.s.}(1))} \\
& \quad + \frac{1}{n}\sum_{a\in \{0,1\}}\frac{|\hat \mu_{\bs y(a)}-\mu_{\bs y(a)}-\beta_n\left(d_i(a)-\mu_{d(a)}\right)|^2}{\sigma^2_{\bs w(a)}(1+o_{a.s.}(1))} \\
& 
\quad + \frac{1}{n} \frac{\left( \hat\mu_{\bs y(1)}- \hat\mu_{\bs y(0)} - \beta_n \left(  \hat\mu_{\bs d(1)}- \hat\mu_{\bs d(0)}   \right) \right)^2}{\left( \hat\mu_{\bs y(1)}- \hat\mu_{\bs y(0)} - \beta_n\left(  \hat\mu_{\bs d(1)}- \hat\mu_{\bs d(0)}   \right) \right)^2}\label{haogeemail} \\  
&\overset{a.s.}{\to} 0.
\end{align}
We remark that the term \eqref{haogeemail} comes from \eqref{he2}, after using \eqref{intApril5} to lower bound the variance in the denominator through \eqref{varrep2}.  
\end{proof}
The following lemma establishes the convergence of the variance estimator $\hat \sigma^*(\beta)$. 

\begin{lemma}\label{lemma:randomization_variance_consistency}
Consider a sequence $\left(\theta_n\right)_{n=1}^\infty$ such that $\theta_n\in\Theta_n$. Let $\beta_n=\cace(\theta_n)$. Let $(\theta_{n_k})_{k=1}^\infty$ be any of its subsequences.
Then for every $\kappa>0$, we have 
\begin{equation}
\P^*\left( \left|\frac{\hat \sigma^*(\beta_{n_k})}{\Var^*(\hat{\tau}^*(\beta_{n_k}))} - 1\right| \geq  \kappa \right)=o_{a.s.}(1).
\end{equation}
\end{lemma}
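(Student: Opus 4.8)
The plan is to mirror the proof of Lemma~\ref{lemma:varianceconsistency}, but carried out one level down: everything is done under $\P^*$ (conditionally on $Z$), with Lemma~\ref{lemma:lyapunov_randomization} playing the role that Lemma~\ref{lemma:max} played there. As in the previous proofs I would relabel the subsequence so that its indices run over $\{1,2,3,\dots\}$. Abbreviate $q_i = Y_i-\beta_n D_i$ and $\bar q = \frac1n\sum_{i\in[n]}q_i = \mu_{\bs Y}-\beta_n\mu_{\bs D}$, so that $q_i-\bar q = Y_i-\mu_{\bs Y}-\beta_n(D_i-\mu_{\bs D})$, and set $\hat q^*_1 = \frac1{n_1}\sum_{i\in[n]}Z_i^* q_i$ and $\hat q^*_0 = \frac1{n_0}\sum_{i\in[n]}(1-Z_i^*)q_i$. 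Using the identity $\sum_i Z_i^*(q_i-\hat q^*_1)^2 = \sum_i Z_i^*(q_i-\bar q)^2 - n_1(\hat q^*_1-\bar q)^2$ and its control-group analogue, decompose (recalling \eqref{eqn:AR_var_rand})
\[
\hat\sigma^{*2}(\beta_n) = T_{\mathrm{main}} - T_1 - T_0,
\]
where $T_{\mathrm{main}} = \frac{\pi^2(1-\pi)^2}{n_1^2}\sum_i Z_i^*(q_i-\bar q)^2 + \frac{\pi^2(1-\pi)^2}{n_0^2}\sum_i(1-Z_i^*)(q_i-\bar q)^2$ and $T_a = \frac{\pi^2(1-\pi)^2}{n_a}(\hat q^*_a-\bar q)^2$. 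I would also record the immediate consequence of \eqref{varrep2} that $\Var^*(\hat\tau^*(\beta_n)) = \frac{\pi^2(1-\pi)^2 n}{(n-1)n_1 n_0}\sum_i(q_i-\bar q)^2$, which by Assumption~\ref{a:cr} is bounded above and below by constant multiples of $n^{-2}\sum_i(q_i-\bar q)^2$.

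For the main term, a direct evaluation of the finite-population sampling expectations gives the exact identity $\E^*[T_{\mathrm{main}}] = \frac{n-1}{n}\Var^*(\hat\tau^*(\beta_n))$, so $\E^*\big[T_{\mathrm{main}}/\Var^*(\hat\tau^*(\beta_n))\big]\to 1$. Since $\sum_i(1-Z_i^*)(q_i-\bar q)^2 = \sum_i(q_i-\bar q)^2 - \sum_i Z_i^*(q_i-\bar q)^2$, the statistic $T_{\mathrm{main}}$ depends on $Z^*$ only through $\sum_i Z_i^*(q_i-\bar q)^2$, with a prefactor of order $n^{-2}$; the finite-population sampling-variance formula then bounds $\Var^*(T_{\mathrm{main}})$ by a constant times $n^{-4}\sum_i(q_i-\bar q)^4$. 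Dividing by $\big(\Var^*(\hat\tau^*(\beta_n))\big)^2$, which is at least a constant times $n^{-4}\big(\sum_i(q_i-\bar q)^2\big)^2$, yields
\[
\Var^*\!\left(\frac{T_{\mathrm{main}}}{\Var^*(\hat\tau^*(\beta_n))}\right) \;\lesssim\; \frac{\sum_i(q_i-\bar q)^4}{\big(\sum_i(q_i-\bar q)^2\big)^2} \;\le\; \frac{\max_{i\in[n]}(q_i-\bar q)^2}{\sum_i(q_i-\bar q)^2}.
\]
Combining this with the comparison $\Var^*(\hat\tau^*(\beta_n))\gtrsim n^{-2}\sum_i(q_i-\bar q)^2$ above, Lemma~\ref{lemma:lyapunov_randomization} shows the right-hand side is $o_{a.s.}(1)$. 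Chebyshev's inequality under $\P^*$ then gives $\P^*\big(|T_{\mathrm{main}}/\Var^*(\hat\tau^*(\beta_n))-1|\ge\kappa\big) = o_{a.s.}(1)$ for every $\kappa>0$.

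For the correction terms, the standard finite-population variance of a sample mean gives $\E^*[(\hat q^*_a-\bar q)^2] = \Var^*(\hat q^*_a) \lesssim n^{-2}\sum_i(q_i-\bar q)^2$, and since $\pi^2(1-\pi)^2/n_a$ is of order $n^{-1}$ we obtain $\E^*[T_a]\lesssim n^{-3}\sum_i(q_i-\bar q)^2$, hence $\E^*[T_a]/\Var^*(\hat\tau^*(\beta_n)) \lesssim n^{-1}$. Markov's inequality under $\P^*$ then gives the deterministic bound $\P^*\big(T_a/\Var^*(\hat\tau^*(\beta_n))\ge\kappa\big)\lesssim(\kappa n)^{-1}$, which is in particular $o_{a.s.}(1)$. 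Combining the three pieces via $\hat\sigma^{*2}(\beta_n)/\Var^*(\hat\tau^*(\beta_n)) = T_{\mathrm{main}}/\Var^*(\hat\tau^*(\beta_n)) - T_1/\Var^*(\hat\tau^*(\beta_n)) - T_0/\Var^*(\hat\tau^*(\beta_n))$ and a union bound yields $\P^*\big(|\hat\sigma^{*2}(\beta_n)/\Var^*(\hat\tau^*(\beta_n))-1|\ge\kappa\big) = o_{a.s.}(1)$, as claimed.

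I expect the main obstacle to be the control of $\Var^*(T_{\mathrm{main}})$. In the non-randomized setting of Lemma~\ref{lemma:varianceconsistency} one has a fourth-moment bound on the adjusted outcomes directly from Assumption~\ref{d:theta}, whereas here the fourth moment $\sum_i(q_i-\bar q)^4$ is not controlled by any hypothesis; the key is to bound it crudely by $\max_{i\in[n]}(q_i-\bar q)^2\cdot\sum_i(q_i-\bar q)^2$ and to recognize the resulting ratio, after matching it against the order of $\Var^*(\hat\tau^*(\beta_n))$, as exactly the quantity shown to vanish almost surely in Lemma~\ref{lemma:lyapunov_randomization}. The algebraic cancellation making $T_{\mathrm{main}}$ a function of $Z^*$ only through the single sum $\sum_i Z_i^*(q_i-\bar q)^2$ (with the small $n^{-2}$ prefactor) is what keeps $\Var^*(T_{\mathrm{main}})$ genuinely lower-order.
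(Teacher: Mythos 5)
Your proposal is correct and is exactly the argument the paper has in mind: its proof of this lemma is only the one-line remark that one should mimic Lemma \ref{lemma:varianceconsistency} using Markov/Chebyshev under $\P^*$ together with the estimates from Lemma \ref{lemma:lyapunov_randomization}, and your decomposition $\hat\sigma^{*2}=T_{\mathrm{main}}-T_1-T_0$, the exact computation $\E^*[T_{\mathrm{main}}]=\tfrac{n-1}{n}\Var^*(\hat\tau^*)$, and the bound $\sum_i(q_i-\bar q)^4\le\max_i(q_i-\bar q)^2\sum_i(q_i-\bar q)^2$ feeding into Lemma \ref{lemma:lyapunov_randomization} supply precisely the omitted details. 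You also correctly identify the one genuine wrinkle relative to Lemma \ref{lemma:varianceconsistency} (no fourth-moment hypothesis on the realized $Y_i-\beta_nD_i$), and resolve it the way the paper intends.
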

\begin{proof}
The proof is similar to that of Lemma \ref{lemma:varianceconsistency}, using the Markov and Chebyshev inequalities and the estimates in the proof of Lemma \ref{lemma:lyapunov_randomization}.
\end{proof}

\begin{lemma}\label{lemma:cv}
    For every $\alpha \in (0, 1)$ and $\kappa >0$, 
\begin{equation}\label{rndcv}
    \lim_{n\rightarrow \infty}   
   \sup_{\beta \in \R} 
   \sup_{ \{\theta \in \Theta_n : \cace(\theta)=\beta\}} \P_{\theta}\Big(\big|\eta^*_{1-\alpha}(\beta, Z)-z_{1-\alpha/2}\big|\geq \kappa \Big)=0.
\end{equation}
\end{lemma}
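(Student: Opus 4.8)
The plan is to first establish, for an arbitrary fixed sequence $(\theta_n)_{n=1}^\infty$ with $\theta_n\in\Theta_n$ and $\beta_n=\cace(\theta_n)$, the almost-sure convergence $\eta^*_{1-\alpha}(\beta_{n_k},Z)\to z_{1-\alpha/2}$ along every subsequence $(\theta_{n_k})_k$, and then to upgrade this to the uniform statement \eqref{rndcv} by the same contradiction scheme used in the proof of \Cref{theorem:ARstatisticsCLT}. So suppose \eqref{rndcv} fails: then there exist $\kappa,\eps>0$, a subsequence $(\theta_{n_k})_k$, and reals $\beta_{n_k}$ with $\cace(\theta_{n_k})=\beta_{n_k}$ such that $\P_{\theta_{n_k}}\big(|\eta^*_{1-\alpha}(\beta_{n_k},Z)-z_{1-\alpha/2}|\geq\kappa\big)\geq\eps$ infinitely often. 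It suffices to show this probability tends to $0$ along $(\theta_{n_k})_k$, which contradicts the displayed inequality.

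For the almost-sure step, work conditionally on $Z$, regarding $\{Y_i-\beta_{n_k}D_i\}_{i=1}^n$ (which is measurable with respect to $Z$) as the pair of coincident ``potential outcomes'' $A^*_i(1)=A^*_i(0)=Y_i-\beta_{n_k}D_i$ of the completely randomized experiment driven by $Z^*$. \Cref{lemma:lyapunov_randomization} shows that the ratio in hypothesis \eqref{noteme} associated with $\hat\tau^*(\beta_{n_k})$ converges to $0$ almost surely along the subsequence; on the corresponding almost-sure event, \Cref{theorem:clt} applied under $\P^*$ gives $\hat\tau^*(\beta_{n_k})/\sqrt{\Var^*(\hat\tau^*(\beta_{n_k}))}\xrightarrow{d}N(0,1)$. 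Combining this with \Cref{lemma:randomization_variance_consistency}, which yields $\hat\sigma^{*2}(\beta_{n_k})/\Var^*(\hat\tau^*(\beta_{n_k}))\to 1$ in $\P^*$-probability almost surely, Slutsky's theorem implies that, on an almost-sure event, the conditional law of $|\Delta^*(\beta_{n_k})|$ under $\P^*$ converges weakly to the law of $|\mathcal Z|$, where $\mathcal Z\sim N(0,1)$.

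Since the limiting CDF $t\mapsto 2\Phi(t)-1$ on $[0,\infty)$ is continuous and strictly increasing, this weak convergence forces convergence of the $(1-\alpha)$-quantiles. First pick a fixed finite $t_0$ with $2\Phi(t_0)-1>1-\alpha$; then for $k$ large (on the almost-sure event) the conditional CDF of $|\Delta^*(\beta_{n_k})|$ exceeds $1-\alpha$ at $t_0$, so $\eta^*_{1-\alpha}(\beta_{n_k},Z)\leq t_0<\infty$ is eventually finite, and the standard quantile-continuity argument (the Polya-type converse of weak convergence at a strictly increasing continuity point) gives $\eta^*_{1-\alpha}(\beta_{n_k},Z)\to z_{1-\alpha/2}$ almost surely. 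Hence $\one\{|\eta^*_{1-\alpha}(\beta_{n_k},Z)-z_{1-\alpha/2}|\geq\kappa\}\to 0$ almost surely, and the bounded convergence theorem yields $\P_{\theta_{n_k}}\big(|\eta^*_{1-\alpha}(\beta_{n_k},Z)-z_{1-\alpha/2}|\geq\kappa\big)\to 0$, the desired contradiction.

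The main obstacle is the interplay between the two layers of randomness. Making the conditional central limit theorem rigorous requires that the deterministic hypothesis of \Cref{theorem:clt} hold for the $Z$-dependent ``potential outcomes'' $Y_i-\beta_{n_k}D_i$ on an almost-sure event; this is precisely what \Cref{lemma:tail_inequality} and \Cref{lemma:lyapunov_randomization} are engineered to provide, and the proof should simply invoke them rather than reprove anything. The second delicate point is that $\eta^*_{1-\alpha}(\beta,Z)$ can a priori equal $+\infty$, so one must use the weak limit to exclude this (via the fixed level $t_0$ above) before extracting quantile convergence; everything else is routine.
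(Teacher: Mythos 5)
Your proposal is correct and follows essentially the same route as the paper: a contradiction argument along a subsequence, with the almost-sure conditional normality of $\Delta^*(\beta_{n_k})$ obtained from \Cref{theorem:clt}, \Cref{lemma:lyapunov_randomization}, and \Cref{lemma:randomization_variance_consistency}, and then quantile convergence to $z_{1-\alpha/2}$. The only cosmetic difference is that the paper cites \cite[Lemma 21.2]{van2000asymptotic} for the quantile-convergence step, whereas you argue it directly (including the finiteness of $\eta^*_{1-\alpha}$ and the final bounded-convergence step, which the paper leaves implicit).
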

\begin{proof}
    We again proceed by contradiction. Suppose \eqref{rndcv} does not hold. Then there exists a subsequence of models $\{ \theta_{n_k} \}_{k=1}^\infty$ and  $\delta_1 > 0$ such that for all $k$,
\be
\P\left( 
\big|\eta^*_{1-\alpha}(\beta, Z)-z_{1-\alpha/2}\big|\geq \kappa
\right) > \delta_1.
\ee 
However, by \Cref{theorem:clt}, \Cref{lemma:lyapunov_randomization},  and   \Cref{lemma:randomization_variance_consistency} 
the distribution of $\Delta^*(\beta)$ under $\P^*$ 
converges to the normal distribution almost surely. As a consequence of this convergence in distribution and \cite[Lemma 21.2]{van2000asymptotic}, which states that convergence in distribution implies weak convergence of quantile functions, the critical value $\eta^* _{1-\alpha}(\beta, Z)$ converges to $z_{1-\alpha/2}$ almost surely under $\P^*$. This is a contradiction, which proves \eqref{rndcv}. 
\end{proof}

\begin{proof}[Proof of \Cref{t:main}]

We prove 
\begin{equation}
   \lim_{n\rightarrow \infty}   
   \inf_{\beta \in \R} 
   \inf_{ \{\theta \in \Theta_n(\delta,r,A)  : \cace=\beta\}} \left(1-\P_{\theta}\big (\beta  \not\in I^*_{1-\alpha}(Z) \big) \right)\geq 1-\alpha. \end{equation}
Or, equivalently,
\begin{equation}\label{eqn:10092024}
   \lim_{n\rightarrow \infty}   
   \sup_{\beta \in \R} 
   \sup_{ \{\theta \in \Theta_n(\delta,r,A)  : \cace=\beta\}} \P_{\theta}\left (|\Delta(\beta)| > \eta^*_{1-\alpha}(\beta,Z) \right)\leq \alpha. \end{equation}
We have the algebraic identity
\begin{align}
& \P_{\theta}\left (|\Delta(\beta)| > \eta^*_{1-\alpha}(\beta,Z) \right) = \P_{\theta}\left (|\Delta(\beta)| > \eta^*_{1-\alpha}(\beta,Z) , |\eta^*_{1-\alpha}(\beta,Z)-z_{1-\alpha/2}|\geq \epsilon\right) \\
& + \P_{\theta}\left (|\Delta(\beta)| > \eta^*_{1-\alpha}(\beta,Z) , |\eta^*_{1-\alpha}(\beta,Z)-z_{1-\alpha/2}|< \epsilon\right)\\
& \leq \P_{\theta}\left ( |\eta^*_{1-\alpha}(\beta,Z)-z_{1-\alpha/2}|\geq \epsilon\right) + \P_{\theta}\left (|\Delta(\beta)| > z_{1-\alpha/2}-\epsilon \right).
\end{align}

We have
\begin{align}
      & \lim_{n\rightarrow \infty}   
   \sup_{\beta \in \R} 
   \sup_{ \{\theta \in \Theta_n(\delta,r,A)  : \cace=\beta\}} \P_{\theta}\left (|\Delta(\beta)| > \eta^*_{1-\alpha}(\beta,Z) \right)\\
   &\leq      \lim_{n\rightarrow \infty}   
   \sup_{\beta \in \R} 
   \sup_{ \{\theta \in \Theta_n(\delta,r,A)  : \cace=\beta\}}\P_{\theta}\left ( |\eta^*_{1-\alpha}(\beta,Z)-z_{1-\alpha/2}|\geq \epsilon\right)\\
   & \quad +  \lim_{n\rightarrow \infty}   
   \sup_{\beta \in \R} 
   \sup_{ \{\theta \in \Theta_n(\delta,r,A)  : \cace=\beta\}}\P_{\theta}\left (|\Delta(\beta)| > z_{1-\alpha/2}-\epsilon \right).
\end{align}
Note that the left-hand side does not depend on $\epsilon$. For the right-hand side, we have shown that for every $\epsilon>0$,
\begin{equation}
      \lim_{n\rightarrow \infty}   
   \sup_{\beta \in \R} 
   \sup_{ \{\theta \in \Theta_n(\delta,r,A)  : \cace=\beta\}}\P_{\theta}\left ( |\eta^*_{1-\alpha}(\beta,Z)-z_{1-\alpha/2}|\geq \epsilon\right)=0
\end{equation}
by Lemma \ref{lemma:cv},
and 
\begin{equation}
    \lim_{\epsilon\to 0^+}\lim_{n\rightarrow \infty}   
   \sup_{\beta \in \R} 
   \sup_{ \{\theta \in \Theta_n(\delta,r,A)  : \cace=\beta\}}\P_{\theta}\left (|\Delta(\beta)| > z_{1-\alpha/2}-\epsilon \right)\leq \alpha,
\end{equation}
by Lemma \ref{thm:clt2}. This shows the desired statement after using that $\epsilon$ was arbitrary. 
\end{proof}

\section{Regression Adjustment}\label{a:c}
\subsection{Notation}
In this section and the next, we retain the notation and definitions set out in \Cref{s:notation}.  We  let $k,\delta,\tilde{\epsilon}, r, A,B$ be the constants fixed in the statement of \Cref{thm:coverage2}, and set $\Xi_n =  \Xi_n(k,\delta,\tilde{\epsilon},A,B,r)
$. \\

\subsection{Least-Squares Parameters} Suppose a $\beta \in \R$ is given. Then we define, for each $a\in \{0,1\}$, outcomes $\bs w_i(a,\beta)$ by 
\be\label{lastd}
\bs w_i(a,\beta) =y_i(d_i(a))-\beta d_i(a).
\ee

We define the least-squares regression parameters 
\be
\phi(a,\beta) \colon \{0,1\} \times \R \rightarrow \R, \qquad
\bs \gamma (a,\beta) \colon \{0,1\} \times \R \rightarrow \R^k,
\ee
by
\begin{equation}\label{covreg}
    \big(\phi(a,\beta),\bs \gamma(a,\beta)\big) = 
    \argmin_{ (s, \bs t ) \in \R \times \R^k }
    \frac{1}{n}\sum_{i=1}^n \left(w_i(a,\beta)-s-\bs x_i' \bs t \right)^2,
\end{equation}
where we interpret $\bs t$ as a column vector. 
We define the residuals for \eqref{covreg} by
\be
\epsilon_i(a, \beta)= w_i(a, \beta) -\phi(a,\beta) -\bs x_i' \bs \gamma(a,\beta).
\ee

The parameters $ \phi^* (a,\beta)$ and $ \gamma^*(a,\beta)$ are defined similarly, by replacing $w_i(a,\beta)$ with $Y_i-\beta D_i$ in \eqref{covreg}, and we define the associated residuals $\epsilon^*_i( \beta)$  by
\be\label{eqn:residual}
\epsilon^*_i( \beta)= Y_i-\beta D_i-\phi^*(a,\beta)-x_i'\gamma^*(a,\beta).
\ee 
We note that $\phi^*(0,\beta)=\phi^*(1,\beta)$ and $\gamma^*(0,\beta)=\gamma^*(1,\beta)$.\\

\subsection{Infeasible Estimator} To analyze the asymptotic behavior of $\hat \tau_\reg(\beta)$, we define the infeasible estimator 
\begin{align*}
    \tilde{\tau}_\reg(\beta) =& \frac{1}{n_1}\sum_{i\in [n]}Z_i \big(Y_i-\beta D_i - x_i'\gamma(1, \beta)\big) - \frac{1}{n_0}\sum_{i\in [n]}(1-Z_i)\big(Y_i-\beta D_i - x_i'\gamma(0, \beta)\big).
\end{align*}
The infeasible estimator $\tilde{\tau}^*_\reg(\beta)$ is defined similarly, replacing $Z_i$ by $Z_i^*$, and $\gamma(1,\beta)$ and $\gamma(0,\beta)$ by $\gamma^*(1,\beta)$ and $\gamma^*(0,\beta)$, respectively.

A straightforward adaptation of \Cref{l:imbensrubin} shows that 
\begin{equation}\label{adjvarrep}
    \Var\big(\tilde{\tau}_\reg(\beta)\big)= \frac{\mathcal{S}^2_{1}(\beta)}{n_1}+\frac{\mathcal{S}^2_{0}(\beta)}{n_0} - \frac{\mathcal{S}^2_{10}(\beta)}{n},
\end{equation}
where
\begin{align}
     &\mathcal{S}^2_{1}(\beta)= \frac{1}{n-1}\sum_{i\in [n]} \epsilon_i(1, \beta) ^2,\\
     &\mathcal{S}^2_{0}(\beta)= \frac{1}{n-1}\sum_{i\in [n]}\epsilon_i(0, \beta)^2,\\
     &\mathcal{S}^2_{10}(\beta)= \frac{1}{n-1}\sum_{i\in [n]}\big(\epsilon_i(1, \beta)-\epsilon_i(0, \beta)\big)^2.     
\end{align}

Similarly,
\begin{equation}
    \Var^*\big(\tilde{\tau}^*_\reg(\beta)\big)= \frac{\mathcal{S}^2_{1,*}(\beta)}{n_1}+\frac{\mathcal{S}^2_{0,*}(\beta)}{n_0} ,
\qquad
     \mathcal{S}^2_{1,*}(\beta)=\mathcal{S}^2_{0,*}(\beta)= \frac{1}{n-1}\sum_{i\in [n]} \epsilon^*_i( \beta) ^2.
\end{equation}


\subsection{Preliminary Results: Regression Algebra and Asymptotic Equivalence} 
This section contains two elementary lemmas for later use. Lemma \ref{lemma14} contains results on some least squares algebra, and Lemma \ref{lemma:asymptoticequi} is an auxiliary lemma for proving asymptotic equivalence.  
Denote $x_i^\circ=[0,x'_i]'\in \mathbb{R}^{k+1}$ and $\tilde{x}_i=[1,x'_i]'\in \mathbb{R}^{k+1} $. For $a \in \{ 0, 1 \}$,  define
\be\label{psidef}
 \psi ( a, \beta)  = 
\begin{bmatrix}
\phi(a,\beta) \\
\gamma(a,\beta)
\end{bmatrix},
\qquad
 \hat \psi ( a, \beta)  = 
\begin{bmatrix}
\hat \phi(a,\beta) \\
\hat \gamma(a,\beta)
\end{bmatrix}.
\ee
We denote their randomization counterparts as  $\psi^* ( a, \beta)$ and ${\hat \psi}^* ( a, \beta)$.
We first document a few simple algebraic facts.
\begin{lemma}\label{lemma14}
The following claims hold. 

\begin{enumerate}
\item We have 
\begin{equation}\label{olsrep}
    \hat \psi (1, \beta) -  \psi (1, \beta)
    = \left(\frac{1}{n_1}\sum_{i\in [n]}Z_i \tilde{x}_i\tilde{x}_i'\right)^{-1}\frac{1}{n_1}\sum_{i\in [n]}Z_i\tilde{x}_i\epsilon_i(1,\beta)
\end{equation} 
and
\begin{equation}
    \hat \psi (0, \beta) -  \psi  (0, \beta)
    = \left(\frac{1}{n_0}\sum_{i\in [n]}(1-Z_i)\tilde{x}_i\tilde{x}_i'\right)^{-1}\frac{1}{n_0}\sum_{i\in [n]}(1-Z_i)\tilde{x}_i\epsilon_i(0,\beta),
\end{equation} 
provided that the matrices $\sum_{i\in [n]}Z_i \tilde{x}_i\tilde{x}_i'$ and $\sum_{i\in [n]}(1-Z_i)\tilde{x}_i\tilde{x}_i'$ are invertible. 
\item For each $a \in \{0,1\}$, 
\begin{equation}\label{orthogonalresiduals}
\sum_{i=1}^n \tilde x_i \eps_i(a, \beta)= 0_{k+1}.
\end{equation}
\end{enumerate}
\end{lemma}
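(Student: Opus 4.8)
The plan is to derive both claims directly from the first-order (normal) equations of the least-squares problems involved; no limiting arguments are needed. I would dispatch claim (2) first, since it is immediate: by definition \eqref{covreg}, the pair $(\phi(a,\beta),\bs\gamma(a,\beta))$ minimizes the convex function $(s,\bs t)\mapsto \frac1n\sum_{i=1}^n (w_i(a,\beta)-s-\bs x_i'\bs t)^2$ over $\R\times\R^k$, so setting its gradient with respect to $(s,\bs t)$ to zero yields $\sum_{i=1}^n [1,\bs x_i']'\,\big(w_i(a,\beta)-\phi(a,\beta)-\bs x_i'\bs\gamma(a,\beta)\big)=0_{k+1}$. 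Recalling $\tilde x_i=[1,\bs x_i']'$ and the definition $\eps_i(a,\beta)=w_i(a,\beta)-\phi(a,\beta)-\bs x_i'\bs\gamma(a,\beta)$, this is exactly \eqref{orthogonalresiduals}. (If $n^{-1}\sum_i \bs x_i\bs x_i'$ is singular the minimizer need not be unique, but every minimizer satisfies these equations, which is all that is asserted; under Assumption~\ref{d:expwithcovariates}(v) this degeneracy does not arise.)

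For claim (1), I would start from the explicit OLS formula: $\hat\psi(1,\beta)=[\hat\phi(1,\beta);\hat\gamma(1,\beta)]$ is the coefficient vector from regressing $Y_i-\beta D_i$ on $\tilde x_i$ over the units with $Z_i=1$, so, provided $\sum_{i\in[n]}Z_i\tilde x_i\tilde x_i'$ is invertible,
\[
\hat\psi(1,\beta)=\Big(\tfrac{1}{n_1}\sum_{i\in[n]}Z_i\tilde x_i\tilde x_i'\Big)^{-1}\tfrac{1}{n_1}\sum_{i\in[n]}Z_i\tilde x_i\,(Y_i-\beta D_i).
\]
On the event $Z_i=1$ we have $D_i=d_i(1)$ and, by the exclusion restriction (Assumption~\ref{a:exclusion}), $Y_i=y_i(D_i)=y_i(d_i(1))$, hence $Y_i-\beta D_i=y_i(d_i(1))-\beta d_i(1)=w_i(1,\beta)$; thus $Z_i(Y_i-\beta D_i)=Z_i w_i(1,\beta)$ for every $i$. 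Substituting the decomposition $w_i(1,\beta)=\tilde x_i'\psi(1,\beta)+\eps_i(1,\beta)$ and splitting the sum, the $\tilde x_i'\psi(1,\beta)$ part reconstructs $\big(\tfrac{1}{n_1}\sum_i Z_i\tilde x_i\tilde x_i'\big)\psi(1,\beta)$, which the leading inverse cancels, leaving precisely $\psi(1,\beta)+\big(\tfrac{1}{n_1}\sum_i Z_i\tilde x_i\tilde x_i'\big)^{-1}\tfrac{1}{n_1}\sum_i Z_i\tilde x_i\eps_i(1,\beta)$; this is \eqref{olsrep}. The control-group identity follows from the identical computation with $Z_i$ replaced by $1-Z_i$, $n_1$ by $n_0$, and the index $1$ by $0$, using $Y_i-\beta D_i=w_i(0,\beta)$ on the event $\{Z_i=0\}$.

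There is no substantive obstacle — this is elementary least-squares bookkeeping. The only points that need care are (i) invoking Assumption~\ref{a:exclusion} together with the definitions $D_i=d_i(Z_i)$ and $Y_i=y_i(D_i)$ to identify $Y_i-\beta D_i$ with $w_i(Z_i,\beta)$ on the relevant event, so that the observed regressand coincides with the regressand defining $\psi(a,\beta)$; and (ii) noting that the same two derivations apply verbatim to the randomization counterparts $\hat\psi^*(a,\beta)$ and $\psi^*(a,\beta)$ (replacing $Z_i$ by $Z_i^*$), which will be used in the asymptotic-equivalence arguments that follow.
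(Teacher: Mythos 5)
Your proof is correct and follows the same route as the paper, which simply cites ``the usual OLS formula'' for \eqref{olsrep} and the orthogonality of OLS residuals to the regressor column space for \eqref{orthogonalresiduals}; you have merely written out these standard computations (normal equations for claim (2); the substitution $Y_i-\beta D_i = w_i(1,\beta)$ on $\{Z_i=1\}$ and the decomposition $w_i(1,\beta)=\tilde x_i'\psi(1,\beta)+\epsilon_i(1,\beta)$ for claim (1)) in full detail.
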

\begin{proof}
The claim \eqref{olsrep} follows from the usual OLS formula.
The equality \eqref{orthogonalresiduals} follows from the well-known fact that the residuals of the OLS solution are orthogonal to the column space of the matrix of regressors. 
\end{proof}

\begin{lemma}\label{lemma:asymptoticequi}
Let $(\hat{\tau}_n)_{n=1}^{\infty}$ and $(\Tilde{\tau}_n)_{n=1}^{\infty}$ be two sequences of random variables, and let $(\tau_{0n})_{n=1}^{\infty}$ be a sequence of constants such that the following two conditions hold:
\begin{enumerate}
    \item For every $c\in\mathbb{R}$,
\begin{equation}\label{e:d165}
    \lim_{n\to \infty} \sup_{\theta\in \Xi_n} \left| \P_{\theta}\left( \left|\frac{\Tilde{\tau}_n-\tau_{0n}}{\sqrt{\Var(\Tilde{\tau}_n)}}\right| \geq  c\right)- \P
    \left(|\mathcal{Z}|\geq c\right) \right| = 0,
\end{equation}
where $\mathcal Z$ is a standard normal random variable. 
    \item For every $\epsilon>0$,
\begin{equation}
    \lim_{n\to \infty} \sup_{\theta\in \Xi_n}\P_{\theta} \left( \left|\frac{\Tilde{\tau}_n-\hat{\tau}_n}{\sqrt{\Var(\Tilde{\tau}_n)}}\right| \geq  \epsilon\right) = 0.
\end{equation}
\end{enumerate}
Then
\begin{equation}
\lim_{n\to\infty}\sup_{\theta\in \Xi_n}\left|\P_{\theta}\left( \left|\frac{\hat{\tau}_n-\tau_{0n}}{\sqrt{\Var(\Tilde{\tau}_n)}}\right| \geq c\right) - \P\left(|\mathcal{Z}|\geq c\right)\right|=0.
\end{equation}    
\end{lemma}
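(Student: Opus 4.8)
The plan is to run a uniform version of the standard Slutsky argument. Introduce the shorthands
\[
R_n = \frac{\Tilde{\tau}_n - \tau_{0n}}{\sqrt{\Var(\Tilde{\tau}_n)}},\qquad
S_n = \frac{\hat{\tau}_n - \Tilde{\tau}_n}{\sqrt{\Var(\Tilde{\tau}_n)}},\qquad
T_n = \frac{\hat{\tau}_n - \tau_{0n}}{\sqrt{\Var(\Tilde{\tau}_n)}} = R_n + S_n,
\]
so that hypothesis (1) reads $\sup_{\theta\in\Xi_n}\big|\P_\theta(|R_n|\ge c)-\P(|\mathcal{Z}|\ge c)\big|\to 0$ for every $c$, and hypothesis (2) reads $\sup_{\theta\in\Xi_n}\P_\theta(|S_n|\ge\epsilon)\to 0$ for every $\epsilon>0$. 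We may assume $c>0$, since for $c\le 0$ the asserted limit is trivial ($\P(|\mathcal{Z}|\ge c)=1=\P_\theta(|T_n|\ge c)$). By the triangle inequality, for any such $c$ and any $\epsilon\in(0,c)$ one has the pointwise inclusions
\[
\{|R_n|\ge c+\epsilon\}\setminus\{|S_n|\ge\epsilon\}\ \subseteq\ \{|T_n|\ge c\}\ \subseteq\ \{|R_n|\ge c-\epsilon\}\cup\{|S_n|\ge\epsilon\},
\]
valid under every $\P_\theta$ with $\theta\in\Xi_n$.

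Next I would take probabilities and then suprema (resp. infima) over $\Xi_n$, using subadditivity together with the elementary bound $\inf_\theta(f_\theta-g_\theta)\ge\inf_\theta f_\theta-\sup_\theta g_\theta$, to obtain
\[
\inf_{\theta}\P_\theta(|R_n|\ge c+\epsilon)-\sup_\theta\P_\theta(|S_n|\ge\epsilon)\ \le\ \inf_\theta\P_\theta(|T_n|\ge c)\ \le\ \sup_\theta\P_\theta(|T_n|\ge c)\ \le\ \sup_\theta\P_\theta(|R_n|\ge c-\epsilon)+\sup_\theta\P_\theta(|S_n|\ge\epsilon).
\]
Sending $n\to\infty$ and invoking hypotheses (1) and (2), the $|S_n|$ terms vanish and the $|R_n|$ terms converge to $\P(|\mathcal{Z}|\ge c+\epsilon)$ and $\P(|\mathcal{Z}|\ge c-\epsilon)$, respectively, which yields
\[
\P(|\mathcal{Z}|\ge c+\epsilon)\ \le\ \liminf_{n\to\infty}\inf_{\theta\in\Xi_n}\P_\theta(|T_n|\ge c)\ \le\ \limsup_{n\to\infty}\sup_{\theta\in\Xi_n}\P_\theta(|T_n|\ge c)\ \le\ \P(|\mathcal{Z}|\ge c-\epsilon).
\]

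Finally, I would let $\epsilon\downarrow 0$. Because $|\mathcal{Z}|$ has a continuous distribution function, $\P(|\mathcal{Z}|\ge c\pm\epsilon)\to\P(|\mathcal{Z}|\ge c)$, so both the $\liminf$ of the infimum and the $\limsup$ of the supremum equal $\P(|\mathcal{Z}|\ge c)$. Writing $\sup_\theta\big|\P_\theta(|T_n|\ge c)-\P(|\mathcal{Z}|\ge c)\big|=\max\{\sup_\theta\P_\theta(|T_n|\ge c)-\P(|\mathcal{Z}|\ge c),\ \P(|\mathcal{Z}|\ge c)-\inf_\theta\P_\theta(|T_n|\ge c)\}$ then delivers the claimed uniform convergence. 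I do not anticipate a substantive obstacle: the only point demanding care is the order of operations — the suprema and infima over $\Xi_n$ must be taken before passing to the limit in $n$, so that it is precisely the uniformity in (1) and (2) that is exploited; an ordinary, non-uniform Slutsky lemma would not yield a uniform conclusion. A minor bookkeeping detail is the set-difference in the left inclusion, which is the reason $\epsilon$ is restricted to $(0,c)$.
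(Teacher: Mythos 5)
Your proposal is correct and follows essentially the same route as the paper's proof: the same triangle-inequality decomposition bounding $\P_\theta(|T_n|\ge c)$ between $\P_\theta(|R_n|\ge c+\epsilon)-\P_\theta(|S_n|\ge\epsilon)$ and $\P_\theta(|R_n|\ge c-\epsilon)+\P_\theta(|S_n|\ge\epsilon)$, with suprema over $\Xi_n$ taken before the limit in $n$, followed by $\epsilon\downarrow 0$ and continuity of the normal distribution function. The only cosmetic difference is that you track $\sup_\theta$ and $\inf_\theta$ separately and combine them at the end, whereas the paper bounds the absolute deviation directly.
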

\begin{proof}
For every $\delta>0$ and $c\in\mathbb{R}$, we have

\begin{align}
     \P_{\theta}\left( \left|\frac{\hat{\tau}_n-\tau_{0n}}{\sqrt{\Var(\Tilde{\tau}_n)}}\right| \geq c+\delta\right)  &=  \P_{\theta}\left( \left|\frac{\hat{\tau}_n-\tau_{0n}}{\sqrt{\Var(\Tilde{\tau}_n)}}\right|\geq c+\delta, \left|\frac{\hat{\tau}_n-\Tilde{\tau}_n}{\sqrt{\Var(\Tilde{\tau}_n)} }  \right|  \geq \delta\right) \\
    & \quad + \P_{\theta}\left( \left|\frac{\hat{\tau}_n-\tau_{0n}}{\sqrt{\Var(\Tilde{\tau}_n)}}\right|\geq c+\delta, \left|\frac{\hat{\tau}_n-\Tilde{\tau}_n}{\sqrt{\Var(\Tilde{\tau}_n)} }  \right|  <\delta\right)\\
   & \leq  \P_{\theta}\left(  \left|\frac{\hat{\tau}_n-\Tilde{\tau}_n}{\sqrt{\Var(\Tilde{\tau}_n)} }  \right|  \geq \delta\right)+ \P_{\theta}\left( \left|\frac{\tilde{\tau}_n-\tau_{0n}}{\sqrt{\Var(\Tilde{\tau}_n)}}\right|\geq c\right),
\end{align}
and
\begin{align}
    \P_{\theta}\left( \left|\frac{\hat{\tau}_n-\tau_{0n}}{\sqrt{\Var(\Tilde{\tau}_n)}}\right| \geq c\right) & =    \P_{\theta}\left( \left|\frac{\hat{\tau}_n-\tau_{0n}}{\sqrt{\Var(\Tilde{\tau}_n)}}\right|\geq c, \left|\frac{\hat{\tau}_n-\Tilde{\tau}_n}{\sqrt{\Var(\Tilde{\tau}_n)} }  \right|  \geq \delta\right)\\
   &\quad  + \P_{\theta}\left( \left|\frac{\hat{\tau}_n-\tau_{0n}}{\sqrt{\Var(\Tilde{\tau}_n)}}\right|\geq c, \left|\frac{\hat{\tau}_n-\Tilde{\tau}_n}{\sqrt{\Var(\Tilde{\tau}_n)} }  \right|  <\delta\right)\\
   & \leq \P_{\theta}\left( \left|\frac{\hat{\tau}_n-\Tilde{\tau}_n}{\sqrt{\Var(\Tilde{\tau}_n)} }  \right|  \geq \delta\right)+\P_{\theta}\left( \left|\frac{\tilde{\tau}_n-\tau_{0n}}{\sqrt{\Var(\Tilde{\tau}_n)}}\right|\geq c-\delta\right).
\end{align}
Hence we have
\begin{align}
\P_{\theta}\left( \left|\frac{\hat{\tau}_n-\tau_{0n}}{\sqrt{\Var(\Tilde{\tau}_n)}}\right| \geq c\right)  \geq -\P_{\theta}\left(  \left|\frac{\hat{\tau}_n-\Tilde{\tau}_n}{\sqrt{\Var(\Tilde{\tau}_n)} }  \right|  \geq \delta\right) +  \P_{\theta}\left( \left|\frac{\tilde{\tau}_n-\tau_{0n}}{\sqrt{\Var(\Tilde{\tau}_n)}}\right|\geq c+\delta\right)
\end{align}
and therefore
\begin{align}
& \P_{\theta}\left( \left|\frac{\hat{\tau}_n-\tau_{0n}}{\sqrt{\Var(\Tilde{\tau}_n)}}\right| \geq c\right) - \P\left(|\mathcal{Z}|\geq c\right)  \\ & \geq -\P_{\theta}\left(  \left|\frac{\hat{\tau}_n-\Tilde{\tau}_n}{\sqrt{\Var(\Tilde{\tau}_n)} }  \right|  \geq \delta\right) +  \P_{\theta}\left( \left|\frac{\tilde{\tau}_n-\tau_{0n}} {\sqrt{\Var(\Tilde{\tau}_n)}}\right|\geq c+\delta\right)- \P\left(|\mathcal{Z}|\geq c\right)\\    
& =  -\P_{\theta}\left(  \left|\frac{\hat{\tau}_n-\Tilde{\tau}_n}{\sqrt{\Var(\Tilde{\tau}_n)} }  \right|  \geq \delta\right) +  \P_{\theta}\left( \left|\frac{\tilde{\tau}_n-\tau_{0n}} {\sqrt{\Var(\Tilde{\tau}_n)}}\right|\geq c+\delta\right)\\
& \quad -\P\left(|\mathcal{Z}|\geq c+\delta\right)+\P\left(|\mathcal{Z}|\geq c+\delta\right)- \P\left(|\mathcal{Z}|\geq c\right),
\end{align}
and similarly,
\begin{align}
& \P_{\theta}\left( \left|\frac{\hat{\tau}_n-\tau_{0n}}{\sqrt{\Var(\Tilde{\tau}_n)}}\right| \geq c\right) - \P\left(|\mathcal{Z}|\geq c\right)  \\
& \leq \P_{\theta}\left(  \left|\frac{\hat{\tau}_n-\Tilde{\tau}_n}{\sqrt{\Var(\Tilde{\tau}_n)} }  \right|  \geq \delta\right) +  \P_{\theta}\left( \left|\frac{\tilde{\tau}_n-\tau_{0n}} {\sqrt{\Var(\Tilde{\tau}_n)}}\right|\geq c-\delta\right)\\
& \quad -\P\left(|\mathcal{Z}|\geq c-\delta\right)+\P\left(|\mathcal{Z}|\geq c-\delta\right)- \P\left(|\mathcal{Z}|\geq c\right)  .
\end{align}
Collecting both inequalities and taking the limit $n\rightarrow \infty$, we find 
\begin{align*}
    &    \lim_{n\to \infty} \sup_{\theta\in \Xi_n}\left|\P_{\theta}\left( \left|\frac{\hat{\tau}_n-\tau_{0n}}{\sqrt{\Var(\Tilde{\tau}_n)}}\right| \geq c\right) - \P\left(|\mathcal{Z}|\geq c\right)\right|
    \\
    & \leq     \lim_{n\to \infty} \sup_{\theta\in \Xi_n}\P_{\theta}\left(  \left|\frac{\hat{\tau}_n-\Tilde{\tau}_n}{\sqrt{\Var(\Tilde{\tau}_n)} }  \right|  \geq \delta\right)
    \\
    & \quad +   \lim_{n\to \infty} \sup_{\theta\in \Xi_n}\left|\P_{\theta}\left( \left|\frac{\tilde{\tau}_n-\tau_{0n}} {\sqrt{\Var(\Tilde{\tau}_n)}}\right|\geq c+\delta\right)-\P\left(|\mathcal{Z}|\geq c+\delta\right) \right|\\
    & \quad  +   \lim_{n\to \infty} \sup_{\theta\in \Xi_n}\left|\P_{\theta}\left( \left|\frac{\tilde{\tau}_n-\tau_{0n}} {\sqrt{\Var(\Tilde{\tau}_n)}}\right|\geq c-\delta\right) -\P(|\mathcal{Z}|\geq c-\delta) \right|\\
    & \quad + \left|\P\left(|\mathcal{Z}|\geq c+\delta\right)- \P\left(|\mathcal{Z}|\geq c\right) \right| + \left|\P\left(|\mathcal{Z}|\geq c-\delta\right)- \P\left(|\mathcal{Z}|\geq c\right) \right|  .
\end{align*}
Using \eqref{e:d165}, this implies that 
\begin{align} \lim_{n\to \infty} &\sup_{\theta\in\Xi_n}\left|\P_{\theta}\left( \left|\frac{\hat{\tau}_n-\tau_{0n}}{\sqrt{\Var(\Tilde{\tau}_n)}}\right| \geq c\right) - \P\left(|\mathcal{Z}|\geq c\right)\right|\\
    & \leq  \big|\P\left(|\mathcal{Z}|\geq c+\delta\right)- \P\left(|\mathcal{Z}|\geq c\right) \big| + \big|\P(|\mathcal{Z}|\geq c-\delta)- \P\left(|\mathcal{Z}|\geq c\right) \big|.
\end{align}
Since $\delta$ is arbitrary, we can take $\delta\to 0 $ and use the continuity of the CDF of the normal distribution to obtain
\begin{equation}
     \lim_{n\to \infty} \sup_{\mathcal{\theta}\in\Xi_n}\left|\P_{\theta}\left( \left|\frac{\hat{\tau}_n-\tau_{0n}}{\sqrt{\Var(\Tilde{\tau}_n)}}\right| \geq c\right) - \P(|\mathcal{Z}|\geq c)\right|=0.
\end{equation}

\end{proof}

\subsection{Covariate-Adjusted Estimator}

\begin{lemma}\label{lemma:cov_adj_prep}
Consider a sequence $\left(\theta_n\right)_{n=1}^\infty$ such that $\theta_n\in\Xi_n$ and let $\beta_n=\cace(\theta_n)$.  Let $(\theta_{n_k})_{k=1}^\infty$ be any of its subsequences.
The following claims hold.
\begin{enumerate}
    \item We have
    \be\label{ld12}
    \Var\big(\tilde{\tau}_\reg(\beta_{n_k})\big)\geq (1-\delta)\frac{n_0}{n_1n}\mathcal{S}^2_{1}(\beta_{n_k})+ (1-\delta)\frac{n_1}{n_0n}\mathcal{S}^2_{0}(\beta_{n_k}).\ee 
   
    \item Let $\tilde{x}_i=(1,x'_i)'\in \mathbb{R}^{k+1}$. Then
    \begin{equation}\label{xtilde2norm}
        \lim_{k \rightarrow \infty} \frac{1}{n_k}\max_{i\in [n_k]} \|\Tilde{x}_i\|_2^2  =  0.
    \end{equation}

 \item For each $a\in\{0,1\}$,
    \begin{equation}\label{d1ii}
       \frac{  \max_{i\in [n_k]} 
       \epsilon_i(a, \beta_{n_k})^2}{ \frac{1}{n_k}\sum_{i\in [n_k]}\epsilon_i(a, \beta_{n_k})^2} = o(n_k),
    \end{equation}
    and
    \begin{equation}\label{d1ii2}
         \lim_{k \rightarrow \infty}   \frac{1}{n^2_{ak}}   \frac{ \max_{i\in [n_k]} 
       \epsilon_i(a, \beta_{n_k})^2}{\Var(\tilde{\tau}_\reg(\beta_{n_k}))} = 0.
    \end{equation} 
\end{enumerate}
\end{lemma}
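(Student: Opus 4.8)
The plan is to establish the four assertions in turn, after relabelling the subsequence $(\theta_{n_k})_{k=1}^\infty$ as $(\theta_n)_{n=1}^\infty$ and abbreviating $\epsilon_i(a):=\epsilon_i(a,\beta_n)$. Throughout I would use that $\mu_{\epsilon(a,\beta_n)}=n^{-1}\sum_{i\in[n]}\epsilon_i(a)=0$, which is the first coordinate of \eqref{orthogonalresiduals}; consequently $\mathcal S^2_a(\beta_n)=\sigma^2_{\epsilon(a,\beta_n)}$ and $\sigma_{\epsilon(1,\beta_n),\epsilon(0,\beta_n)}=n^{-1}\sum_{i\in[n]}\epsilon_i(1)\epsilon_i(0)$. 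For \eqref{ld12} I would mirror the proof of \Cref{lemma:variancelowerbound1}, using the variance representation \eqref{adjvarrep} in place of \Cref{l:imbensrubin} and condition \eqref{ii-covariate} of \Cref{d:expwithcovariates} in place of condition \eqref{ii} of \Cref{d:theta}. Writing $e_i=\epsilon_i(1)-\epsilon_i(0)$, one has $\mathcal S^2_{10}(\beta_n)=(n-1)^{-1}\sum_{i\in[n]}e_i^2=\mathcal S^2_1(\beta_n)+\mathcal S^2_0(\beta_n)-2(n-1)^{-1}\sum_{i\in[n]}\epsilon_i(1)\epsilon_i(0)$. The new feature, relative to the unadjusted case where $v_i(1)=v_i(0)$ for always-takers and never-takers, is that $e_i$ need not vanish outside $\mathcal C$, because the intercepts $\phi(1,\beta_n),\phi(0,\beta_n)$ and slopes $\gamma(1,\beta_n),\gamma(0,\beta_n)$ differ across $a$; this is precisely why \eqref{ii-covariate} is imposed over all $n$ units rather than over $\mathcal C$, and it makes the bound direct.

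Concretely, \eqref{ii-covariate} together with $\mu_{\epsilon(a)}=0$ gives $-2\sum_{i\in[n]}\epsilon_i(1)\epsilon_i(0)\le 2n\delta\,\sigma_{\epsilon(1,\beta_n)}\sigma_{\epsilon(0,\beta_n)}$, and the weighted arithmetic–geometric mean inequality $2\sqrt{ST}\le (n_0/n_1)S+(n_1/n_0)T$ then yields $\mathcal S^2_{10}(\beta_n)\le(1+\delta' n_0/n_1)\mathcal S^2_1(\beta_n)+(1+\delta' n_1/n_0)\mathcal S^2_0(\beta_n)$ with $\delta'=\tfrac{n}{n-1}\delta$ (the factor $n/(n-1)$ appears only because the covariance in \eqref{ii-covariate} is normalized by $1/n$ and the variances by $1/(n-1)$). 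Substituting this into \eqref{adjvarrep} and collecting the coefficients of $\mathcal S^2_1(\beta_n)$ and $\mathcal S^2_0(\beta_n)$ exactly as around \eqref{lem431} produces \eqref{ld12} with $1-\delta$ replaced by $1-\delta'$; since $\delta'\to\delta<1$ and every subsequent use of \eqref{ld12} only needs a fixed positive lower constant, this is harmless. I expect this to be the only step requiring genuine care, the subtlety being this global (rather than complier-wise) bookkeeping and the innocuous $n/(n-1)$ factor.

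The remaining parts are short moment estimates. For \eqref{xtilde2norm}, $\|\tilde x_i\|_2^2=1+\sum_{s=1}^k x_{si}^2$, so $\max_{i\in[n]}\|\tilde x_i\|_2^2\le 1+\sum_{s=1}^k\max_{i\in[n]}x_{si}^2\le 1+\sum_{s=1}^k\big(\sum_{i\in[n]}x_{si}^4\big)^{1/2}\le 1+k(nA)^{1/2}$ by \eqref{vii-covariate}, and dividing by $n$ and letting $n\to\infty$ gives the claim since $k$ is fixed. For \eqref{d1ii}, the same $\ell^4$–$\ell^\infty$ bound gives $\max_{i\in[n]}\epsilon_i(a)^2\le\big(\sum_{i\in[n]}\epsilon_i(a)^4\big)^{1/2}\le n^{1/2}B^2\sigma^2_{\epsilon(a,\beta_n)}$ by \eqref{iv-covariate}, while $n^{-1}\sum_{i\in[n]}\epsilon_i(a)^2=\tfrac{n-1}{n}\sigma^2_{\epsilon(a,\beta_n)}$, so the ratio is $O(n^{1/2})=o(n)$ (read as $0$ in the degenerate case $\sigma_{\epsilon(a,\beta_n)}=0$, which may be excluded or handled separately). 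Finally, for \eqref{d1ii2}, I would combine \eqref{ld12} — which, since $\mathcal S^2_a(\beta_n)=\sigma^2_{\epsilon(a,\beta_n)}$, gives $\Var(\tilde\tau_\reg(\beta_n))\ge(1-\delta')\tfrac{n-n_a}{n_a n}\sigma^2_{\epsilon(a,\beta_n)}$ for each $a$ — with $\max_{i\in[n]}\epsilon_i(a)^2\le n^{1/2}B^2\sigma^2_{\epsilon(a,\beta_n)}$ and with $n_a(n-n_a)\ge r^2n^2$ from \Cref{a:cr}. This yields $n_a^{-2}\max_{i\in[n]}\epsilon_i(a)^2/\Var(\tilde\tau_\reg(\beta_n))\le B^2r^{-2}(1-\delta')^{-1}n^{-1/2}$, which tends to $0$, completing the proof.
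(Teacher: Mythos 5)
Your proof is correct and follows essentially the same route as the paper: the same expansion of $\mathcal{S}^2_{10}$ combined with \eqref{ii-covariate} and the weighted AM--GM inequality for \eqref{ld12}, and the same $\ell^4$-to-$\ell^\infty$ moment bounds (via \eqref{vii-covariate} and \eqref{iv-covariate}) for the remaining parts. The only difference is your explicit tracking of the $n/(n-1)$ normalization mismatch between the covariance and variance definitions, yielding $1-\delta'$ in place of $1-\delta$, which the paper's own proof silently ignores and which, as you note, is harmless for every downstream use of the bound.
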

\begin{proof}
For simplicity, we map the indices of the subsequence back to $\{1,2,3,\dots\}$. 
   To prove the first claim, note that assumption \eqref{ii-covariate} implies
    \begin{align}
         & \sum_{i\in [n]}\big(\epsilon_i(1, \beta_n)-\epsilon_i(0, \beta_n)\big)^2  \\
         &= \sum_{i\in [n]} \epsilon_i(1, \beta_n) ^2 + \sum_{i\in [n]}\epsilon_i(1, \beta_n)^2 - 2\sum_{i\in [n]}\epsilon_i(1, \beta_n)\epsilon_i(0, \beta_n)\\
         & \leq \left( 1 +  \frac{\delta n_0}{n_1}\right) \sum_{i\in [n]} \epsilon_i(1, \beta_n)^2 + \left( 1 +  \frac{\delta n_1}{n_0}\right) \sum_{i\in [n]}\epsilon_i(0, \beta_n)^2. 
    \end{align}
Therefore, recalling \eqref{adjvarrep},
\begin{align}
    &(n-1)\Var\big(\tilde{\tau}_\reg(\beta_n)\big)  \\  \quad & = \frac{1}{n_1}\sum_{i\in [n]}\epsilon_i(1, \beta_n)^2 + \frac{1}{n_0}\sum_{i\in [n]}\epsilon_i(0, \beta_n)^2 -\frac{1}{n} \sum_{i\in [n]}\left(\epsilon_i(0, \beta_n)-\epsilon_i(0, \beta_n)\right)^2 \label{ineq_03062024_a} \\
    & \geq \left( \frac{1}{n_1} - \frac{1}{n}   -\frac{\delta n_0}{n_1 n}  \right) \sum_{i\in [n]}\epsilon_i(1, \beta_n)^2  + \left( \frac{1}{n_0} - \frac{1}{n} - \frac{\delta n_1}{n_0 n}  \right) \sum_{i\in [n]}\epsilon_i(0, \beta_n)^2 \\ 
    & = \frac{n_0}{n_1 n}(1 -\delta) \sum_{i\in [n]}\epsilon_i(1, \beta_n)^2 + \frac{n_1}{n_0n }(1 -\delta) \sum_{i\in [n]}\epsilon_i(0, \beta_n)^2,
    \label{ineq_03062024_b}
\end{align}
which is \eqref{ld12}.
 
For \eqref{xtilde2norm}, note that for all $s\in [k]$,  assumption \eqref{vii-covariate} implies that 
\begin{equation}\label{d179}
\max_{i\in[n]} |x_{si}|\leq \left(An \right)^{1/4}, \qquad 
\max_{i\in [n]}\|\tilde{x}_i\|_2^2 = 1 + \sum_{s=1}^k \max_{i\in [n]} |x_{si}|^2  \lesssim n^{1/2},
\end{equation}
since $k$ does not depend on $n$. 
Claim 
(\ref{d1ii}) can be proved as in Lemma \ref{lemma:max}, and claim (\ref{d1ii2}) can be proved as in Lemma \ref{lemma:lyapunov}.
\end{proof}

The following lemma collects auxiliary results about the estimators $\hat \tau_\reg$ and $\tilde \tau_\reg$.
\begin{lemma}\label{lemma:cov_adj_clt}
    Let $(\theta_n)_{n=1}^\infty$ be a sequence of models with covariates such that $\theta_n \in \Xi_n$ for all $n \in \mathbb{N}$. We denote $\beta_n = \cace(\theta_n)$. Let $(\theta_{n_k})_{k=1}^\infty$ be any of its subsequences.
The following claims hold. 
\begin{enumerate}
    \item For every $\kappa> 0$
    \begin{equation}\label{lemma:cov_asymptotic_equivalence}
        \lim_{k\to \infty}  \P_{\theta_{n_k}}\left( \big| \tilde \tau_\reg(\beta_{n_k})-\hat \tau_\reg(\beta_{n_k}) \big| \geq  \kappa  \sqrt{\Var\big(\tilde \tau_\reg(\beta_{n_k}) \big)} \right) = 0.
    \end{equation}
    \item  For every $x\in\mathbb{R}$,  
    \begin{equation}\label{lemma:cov_adj_clt_ae}
      \lim_{k\to \infty}  \left| \P_{\theta_{n_k}}\left( \left|\frac{\tilde \tau_\reg(\beta_{n_k})}{\sqrt{\Var(\tilde \tau_\reg(\beta_{n_k})})}\right| \geq  x \right)- \P\big( | \mathcal{Z}| \ge x \big) \right| = 0 .
    \end{equation}
    \item For every $x\in\mathbb{R}$,  
    \begin{equation}\label{lemma:cov_adj_clt_hat}
      \lim_{k\to \infty}  \left| \P_{\theta_{n_k}}\left( \left|\frac{ \hat \tau_\reg(\beta_{n_k})  }{\sqrt{\Var(\tilde \tau_\reg(\beta_{n_k})})}\right| \geq  x\right)- \P\big( | \mathcal{Z}| \ge x \big) \right| = 0 .
    \end{equation}
    \item We have
    \begin{equation}\label{lastclaim}
    \lim_{k\rightarrow \infty} 
    \frac{ n_{1k}^{-2} \sum_{i\in \mathcal T(1) }   \hat{\epsilon}_i(1, \beta_{n_k})^2  +  n_{0k}^{-2} \sum_{i\in \mathcal T(0) }  \hat{\epsilon}_i(0 , \beta_{n_k})^2 }{n_{1k}^{-1} \mathcal{S}^2_1(\beta_{n_k})  + n_{0k}^{-1}  \mathcal{S}^2_0(\beta_{n_k}) }   \overset{p}{\to} 1.
    \end{equation}
\end{enumerate}
\end{lemma}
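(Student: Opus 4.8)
The plan is to prove the four claims in the order (2), (1), (3), (4), mirroring the architecture of the proof of \Cref{t:main}: a central limit theorem for an infeasible estimator, then an asymptotic-equivalence bound, then a combination via \Cref{lemma:asymptoticequi}, and finally consistency of the feasible variance estimator. As in the earlier proofs I will pass to the subsequence and re-index to $\{1,2,3,\dots\}$, writing $\beta_n$ for $\beta_{n_k}$ and abbreviating $\epsilon_i(a)=\epsilon_i(a,\beta_n)$. Two standing facts are used throughout: the full-sample normal equations \eqref{orthogonalresiduals} give $\sum_i\epsilon_i(a,\beta)=0$, hence $\mu_{\epsilon(a,\beta)}=0$ and $\mathcal{S}^2_a(\beta)=\sigma^2_{\epsilon(a,\beta)}$; and \Cref{lemma:cov_adj_prep}(1) with \Cref{a:cr} gives $\Var\big(\tilde\tau_\reg(\beta_n)\big)\gtrsim n^{-1}\big(\mathcal{S}^2_1(\beta_n)+\mathcal{S}^2_0(\beta_n)\big)$ with a constant depending only on $\delta,r$. (When $\Var(\tilde\tau_\reg(\beta_n))=0$ all quantities degenerate trivially; this case is in any event excluded by \Cref{assn:positive_variance_adj}.) For claim (2): on $\{Z_i=a\}$ one has $Y_i-\beta_n D_i-x_i'\gamma(a,\beta_n)=\phi(a,\beta_n)+\epsilon_i(a,\beta_n)$, so $\tilde\tau_\reg(\beta_n)$ is exactly the difference-in-means statistic of \Cref{theorem:clt} for the pairs $A_i(a)=\phi(a,\beta_n)+\epsilon_i(a,\beta_n)$; its mean vanishes since $\sum_i x_i=0$ and $\sum_{i\in\mathcal C}\big(w_i(1,\beta_n)-w_i(0,\beta_n)\big)=|\mathcal C|(\bar\beta-\beta_n)=0$, and $A_i(a)-\mu_{\bs A(a)}=\epsilon_i(a,\beta_n)$, so the Lindeberg-type hypothesis \eqref{noteme} of \Cref{theorem:clt} is precisely \eqref{d1ii2} of \Cref{lemma:cov_adj_prep}. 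Convergence in distribution follows, and continuity of the limiting CDF upgrades it to the claimed uniform convergence of the tail probabilities.

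For claim (1), differencing the two definitions gives $\hat\tau_\reg(\beta_n)-\tilde\tau_\reg(\beta_n)=-\big(n_1^{-1}\sum_i Z_i x_i\big)'\big(\hat\gamma(1,\beta_n)-\gamma(1,\beta_n)\big)+\big(n_0^{-1}\sum_i(1-Z_i)x_i\big)'\big(\hat\gamma(0,\beta_n)-\gamma(0,\beta_n)\big)$, and I would bound the two factors of each product separately. By the OLS identity \eqref{olsrep}, $\hat\psi(a,\beta_n)-\psi(a,\beta_n)=M_a^{-1}g_a$ with $M_1=n_1^{-1}\sum_i Z_i\tilde x_i\tilde x_i'$, $g_1=n_1^{-1}\sum_i Z_i\tilde x_i\epsilon_i(1,\beta_n)$ (and analogously for $a=0$). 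The matrix $M_a$ concentrates around $n^{-1}\sum_i\tilde x_i\tilde x_i'=\mathrm{diag}\big(1,n^{-1}\sum_i x_ix_i'\big)$, whose least eigenvalue is $\geq\min(1,\tilde\epsilon)$ by \eqref{viii-covariate}, so $\|M_a^{-1}\|=O_p(1)$ using \eqref{vii-covariate}; meanwhile $\E[g_a]=n^{-1}\sum_i\tilde x_i\epsilon_i(a,\beta_n)=0$ by \eqref{orthogonalresiduals}, with each coordinate having finite-population sampling variance $\lesssim n^{-2}\sum_i\tilde x_{si}^2\epsilon_i(a,\beta_n)^2\lesssim n^{-1}\sigma^2_{\epsilon(a,\beta_n)}$ by Cauchy--Schwarz and \eqref{vii-covariate}, \eqref{iv-covariate}. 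Hence $\|\hat\gamma(a,\beta_n)-\gamma(a,\beta_n)\|\leq\|\hat\psi(a,\beta_n)-\psi(a,\beta_n)\|=O_p\big(n^{-1/2}\sigma_{\epsilon(a,\beta_n)}\big)$; the same sampling-variance computation applied to $n_a^{-1}\sum_i\one\{Z_i=a\}x_i$ (which has mean $n^{-1}\sum_i x_i=0$ by \eqref{vi-covariate}) gives $\|n_a^{-1}\sum_i\one\{Z_i=a\}x_i\|=O_p(n^{-1/2})$; multiplying, $|\hat\tau_\reg(\beta_n)-\tilde\tau_\reg(\beta_n)|=O_p\big(n^{-1}(\sigma_{\epsilon(1,\beta_n)}+\sigma_{\epsilon(0,\beta_n)})\big)=o_p\big(\sqrt{\Var(\tilde\tau_\reg(\beta_n))}\big)$, all bounds depending only on the fixed constants. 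Claim (3) is then immediate from \Cref{lemma:asymptoticequi} applied with $\hat\tau_n=\hat\tau_\reg(\beta_n)$, $\tilde\tau_n=\tilde\tau_\reg(\beta_n)$, $\tau_{0n}=0$, whose two hypotheses are precisely claims (2) and (1).

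For claim (4), write $\hat\epsilon_i(a,\beta_n)=\epsilon_i(a,\beta_n)-\big(\hat\psi(a,\beta_n)-\psi(a,\beta_n)\big)'\tilde x_i$ for $i\in\mathcal T(a)$; because the subsample residuals obey $\sum_{i\in\mathcal T(a)}\tilde x_i\hat\epsilon_i(a,\beta_n)=0$, squaring and summing collapses the cross term, yielding the exact identity $\sum_{i\in\mathcal T(a)}\hat\epsilon_i(a,\beta_n)^2=\sum_{i\in\mathcal T(a)}\epsilon_i(a,\beta_n)^2-(\hat\psi-\psi)'\big(\sum_{i\in\mathcal T(a)}\tilde x_i\tilde x_i'\big)(\hat\psi-\psi)$. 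The correction term is $\leq\|\hat\psi(a,\beta_n)-\psi(a,\beta_n)\|^2\sum_{i\in\mathcal T(a)}\|\tilde x_i\|^2=O_p(n^{-1}\sigma^2_{\epsilon(a,\beta_n)})\cdot O_p(n)=O_p(\mathcal{S}^2_a(\beta_n))$, so after the $n_a^{-2}$ normalization it contributes only $o_p\big(n_a^{-1}\mathcal{S}^2_a(\beta_n)\big)$ to the numerator. It then remains to prove the infeasible statement $\big(n_1^{-2}\sum_{i\in\mathcal T(1)}\epsilon_i(1,\beta_n)^2+n_0^{-2}\sum_{i\in\mathcal T(0)}\epsilon_i(0,\beta_n)^2\big)\big/\big(n_1^{-1}\mathcal{S}^2_1(\beta_n)+n_0^{-1}\mathcal{S}^2_0(\beta_n)\big)\overset{p}{\to}1$, which is shown in the manner of \Cref{lemma:varianceconsistency}: one has $\E\big[n_1^{-1}\sum_{i\in\mathcal T(1)}\epsilon_i(1,\beta_n)^2\big]=n^{-1}\sum_i\epsilon_i(1,\beta_n)^2=\tfrac{n-1}{n}\mathcal{S}^2_1(\beta_n)$, and by Chebyshev with $\Var\big(n_1^{-1}\sum_{i\in\mathcal T(1)}\epsilon_i(1,\beta_n)^2\big)\lesssim n^{-1}\max_i\epsilon_i(1,\beta_n)^2\cdot n^{-1}\sum_i\epsilon_i(1,\beta_n)^2$ together with $\max_i\epsilon_i(1,\beta_n)^2=o(n)\cdot n^{-1}\sum_i\epsilon_i(1,\beta_n)^2$ from \eqref{d1ii} of \Cref{lemma:cov_adj_prep}, each summand concentrates around its mean; adding the two gives the ratio $\to 1$.

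I expect the main obstacle to be the uniform bound $\|\hat\psi(a,\beta_n)-\psi(a,\beta_n)\|=O_p\big(n^{-1/2}\sigma_{\epsilon(a,\beta_n)}\big)$, which drives both claims (1) and (4): it requires simultaneously the eigenvalue lower bound \eqref{viii-covariate} on the random subsample design matrix $M_a$ and the observation that the full-sample normal equations \eqref{orthogonalresiduals} make the score $g_a$ exactly mean-zero, so that its coordinate fluctuations are $O_p(n^{-1/2})$ on the scale $\sigma_{\epsilon(a,\beta_n)}$ once the fourth-moment controls \eqref{vii-covariate} and \eqref{iv-covariate} are invoked. Everything else is a direct appeal to \Cref{theorem:clt}, \Cref{lemma:cov_adj_prep}, and \Cref{lemma:asymptoticequi}, or a repetition of arguments already present in the proof of \Cref{t:main}.
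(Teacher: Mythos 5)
Your proof is correct and follows the same architecture as the paper's: a CLT for the infeasible estimator $\tilde\tau_\reg$ via \Cref{theorem:clt} and \eqref{d1ii2}, an asymptotic-equivalence bound obtained from the OLS representation \eqref{olsrep}, a combination through \Cref{lemma:asymptoticequi}, and a separate consistency argument for the feasible variance. Two of your local devices differ from the paper's and are worth noting. For the score $g_a=n_a^{-1}\sum_i \one\{Z_i=a\}\tilde x_i\epsilon_i(a,\beta_n)$, you bound each coordinate's finite-population sampling variance directly by Cauchy--Schwarz with \eqref{iv-covariate} and \eqref{vii-covariate}, obtaining $\|g_a\|=O_p(n^{-1/2}\sigma_{\epsilon(a,\beta_n)})$, whereas the paper bounds the spectral norm of the covariance matrix \eqref{covmat} using the maximum residual and concludes the weaker but sufficient $o_p(\sqrt{n\Var(\tilde\tau_\reg)})$ in \eqref{banana}; your rate is sharper and the two are interchangeable here since \Cref{lemma:cov_adj_prep}(1) gives $\Var(\tilde\tau_\reg(\beta_n))\gtrsim n^{-1}\sigma^2_{\epsilon(a,\beta_n)}$. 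For claim (4), you invoke the subsample normal equations $\sum_{i\in\mathcal T(a)}\tilde x_i\hat\epsilon_i(a,\beta_n)=0$ to collapse the expansion into the exact Pythagorean identity $\sum_{i\in\mathcal T(a)}\hat\epsilon_i^2=\sum_{i\in\mathcal T(a)}\epsilon_i^2-(\hat\psi-\psi)'\bigl(\sum_{i\in\mathcal T(a)}\tilde x_i\tilde x_i'\bigr)(\hat\psi-\psi)$, so only the quadratic correction needs bounding; the paper instead expands the square and controls the cross term \eqref{231a} and the quadratic term \eqref{231} separately. Your route is slightly cleaner but buys nothing essential; both reduce to the same infeasible concentration statement handled as in \Cref{lemma:varianceconsistency} via \eqref{d1ii}.
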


\begin{proof}
For simplicity, we map the indices of the subsequence back to $\{1,2,3,\dots\}$. We begin with \eqref{lemma:cov_asymptotic_equivalence}. By the definitions of $ \hat \tau_\reg(\beta)$ and $\tilde \tau_\reg(\beta_n)$, 
\begin{align*}
     \hat \tau_\reg(\beta_n) - \tilde \tau_\reg(\beta_n) = \frac{1}{n_1}\sum_{i=1}^n Z_i x_i'\big(\hat \gamma(1,\beta_n)- \gamma(1,\beta_n)\big) - \frac{1}{n_0}\sum_{i=1}^n (1-Z_i) x_i'\big(\hat \gamma(0,\beta_n)-\gamma(0,\beta_n)\big).
\end{align*}
Recall the notation $x_i^\circ=[0,x'_i]'\in \mathbb{R}^{k+1}$ and $\tilde{x}_i=[1,x'_i]'\in \mathbb{R}^{k+1} $. Using Chebyshev's inequality and \eqref{olsrep}, the previous expression becomes, with probability approaching one,
\begin{align}
     & \hat \tau_\reg(\beta_n) - \tilde \tau_\reg(\beta_n) \\ & \quad = \frac{1}{n_1}\sum_{i=1}^n Z_i x_i^{\circ'}\left(\frac{1}{n_1}\sum_{i\in [n]}Z_i \tilde{x}_i\tilde{x}_i'\right)^{-1}\frac{1}{n_1}\sum_{i\in [n]}Z_i\tilde{x}_i\epsilon_i(1,\beta_n)
     \\
     & \qquad  -\frac{1}{n_0}\sum_{i=1}^n (1-Z_i) x_i^{\circ '}\left(\frac{1}{n_0}\sum_{i\in [n]}(1-Z_i) \tilde{x}_i\tilde{x}_i'\right)^{-1}\frac{1}{n_0}\sum_{i\in [n]}(1-Z_i)\tilde{x}_i\epsilon_i(0,\beta_n).
\end{align}
By assumptions \eqref{vii-covariate} and \eqref{viii-covariate} and the fact that the number of covariates $k$ does not depend on $n$, we have 
\be \label{apple1}
\frac{1}{n_1}\sum_{i=1}^n Z_i x_i^{\circ '}=O_p\left(\frac{1}{\sqrt{n}}\right), \qquad \frac{1}{n_0}\sum_{i=1}^n (1-Z_i) x_i^{\circ '}=O_p\left(\frac{1}{\sqrt{n}}\right),\ee
\be \label{apple3} \left(\frac{1}{n_1}\sum_{i\in [n]}Z_i \tilde{x}_i\tilde{x}_i'\right)^{-1}=O_p(1),\qquad \left(\frac{1}{n_0}\sum_{i\in [n]}(1-Z_i) \tilde{x}_i\tilde{x}_i'\right)^{-1}=O_p(1).\ee 
These bounds are justified by observing that the random vectors in \eqref{apple1}, and the random matrices being inverted in \eqref{apple3}, concentrate about their mean by the Chebyshev's inequality.

Further, we observe that the random vector 
\be\label{ab12}
\frac{1}{n_1}\sum_{i\in [n]}Z_i\tilde{x}_i\epsilon_i (1,\beta_n)
\ee 
has mean zero, and a straightforward computation using \eqref{orthogonalresiduals}
shows that its covariance matrix is 
\be\label{covmat}
\frac{n_0}{n_1 n }\frac{1}{n-1}\sum_{i\in[n]}\tilde{x}_i\tilde{x}_i'\epsilon^2_i(1, \beta_n).
\ee

Let $||\cdot||_2$ denote the spectral norm. Next, using \eqref{ld12},  then \eqref{d1ii} and \eqref{d179}, we find that
\begin{align}
        &\frac{n_0}{n_1 n } \frac{1}{n-1} \frac{\| \sum_{i\in[n]}\tilde{x}_i\tilde{x}_i'\epsilon_i^2(1, \beta_n)\|_2}{\Var(\tilde{\tau}_{\beta_n})}  \\
        & \quad \leq \frac{n_0}{n_1 n } \frac{1}{(1-\delta)(n-1)}  \left(\frac{ \sum_{i\in[n]}\|\tilde{x}_i\|_2^2\times \max_i\epsilon_i(1, \beta_n)^2}{n_0 (n_1 n )^{-1}\mathcal{S}^2_{1}(\beta_n)+ n_1 (n_0 n )^{-1} \mathcal{S}^2_{0}(\beta_n)} \right)\label{ineq_03062024_c} \\
        &\quad  \leq  \frac{ \sum_{i\in[n]}\|\tilde{x}_i\|_2^2}{(1-\delta)(n-1)}\frac{ \max_i\epsilon_i(1, \beta_n)^2}{\frac{1}{n-1}\sum_{i\in [n]}\epsilon_i(1, \beta_n)^2 }=o(n).   \label{ineq_03062024_d}
\end{align}
In particular, the diagonal (variance) elements of the covariance matrix \eqref{covmat} can be controlled using the previous display. After using Chebyshev's inequality to show that they concentrate about their respective means, we get
\be\label{banana}
\frac{1}{n_1}\sum_{i\in [n]}Z_i\tilde{x}_i\epsilon_i (1,\beta_n) = o_p \left( 
\sqrt{ n\Var(\tilde{\tau}_{\reg}(\beta_n))} \right).
\ee


Then recalling \eqref{apple1} and \eqref{apple3}, we find,
\begin{equation}
     \frac{1}{n_1\sqrt{\Var(\tilde{\tau}_{\reg}(\beta_n))}}\sum_{i=1}^n Z_i x_i^{\circ '}\left(\frac{1}{n_1}\sum_{i\in [n]}Z_i \tilde{x}_i\tilde{x}_i'\right)^{-1}\frac{1}{n_1}\sum_{i\in [n]}Z_i\tilde{x}_i\epsilon(1,\beta_n)=o_p(1).
\end{equation}
A similar argument shows that
\begin{equation}\label{220}
     \frac{1}{n_0\sqrt{\Var(\tilde{\tau}_{\reg}(\beta_n))}}\sum_{i=1}^n Z_i x_i^{\circ '}\left(\frac{1}{n_0}\sum_{i\in [n]}(1-Z_i) \tilde{x}_i\tilde{x}_i'\right)^{-1}\frac{1}{n_0}\sum_{i\in [n]}(1-Z_i)\tilde{x}_i\epsilon_i(0,\beta_n)=o_p(1),
\end{equation}
This proves the first claim.

For the second claim, in \eqref{lemma:cov_adj_clt_ae}, 
first notice that 
\begin{align}
     \tilde{\tau}_\reg(\beta_n)&=\frac{1}{n_1}\sum_{i\in [n]}\left(w_i(a,\beta_n)-x'_i\gamma(1,\beta_n)\right)-\frac{1}{n_0}\sum_{i\in [n]}\left(w_i(a,\beta_n)-x_i'\gamma(0,\beta_n)\right)\notag \\
    & =\frac{1}{n_1}\sum_{i\in [n]}\left(w_i(a,\beta_n)-\phi(1, \beta_n)-x_i'\gamma(1,\beta_n)\right)-\frac{1}{n_0}\sum_{i\in [n]}\left(w_i(a,\beta_n)-\phi(0,\beta_n)-x_i'\gamma(0,\beta_n)\right)\notag \\
    &= \frac{1}{n_1}\sum_{i\in[n]}\epsilon_i(1, \beta_n)- \frac{1}{n_0}\sum_{i\in[n]}\epsilon_i(0, \beta_n),
\end{align}
where we use the fact that 
\be
\phi(1, \beta_n)-\phi(0,\beta_n)=\frac{1}{n}\sum_{i\in [n]} \big(y_i(d_i(1))-\beta_n d_i(1)\big)-\frac{1}{n}\sum_{i\in[n]}\big(y_i(d_i(0))-\beta_n d_i(0)\big)=0,
\ee 
which follows from the definition of $\phi(a,\beta_n)$ and our assumption that $\sum_{i=1}^n x_i=0$.
Then \eqref{lemma:cov_adj_clt_ae} follows from Theorem \ref{theorem:clt} and \eqref{d1ii2}. 

The third claim, in \eqref{lemma:cov_adj_clt_hat}, follows from \eqref{lemma:cov_asymptotic_equivalence}, \eqref{lemma:cov_adj_clt_ae} and Lemma \ref{lemma:asymptoticequi}.
To prove the last claim, in \eqref{lastclaim}, first notice that by definition, we have the identities
\begin{equation}
  Z_i \hat{\epsilon}_i(\beta_n)^2 = Z_i \left(Y_i-\beta_n D_i -\hat \phi(1,\beta_n)-x_i'\hat \gamma(1,\beta_n)\right)^2,
\end{equation}
\begin{equation}
    (1-Z_i) \hat{\epsilon}_i(\beta_n)^2 = (1-Z_i) \left(Y_i-\beta_n D_i-\hat \phi(0,\beta_n)-x_i'\hat \gamma(0,\beta_n)\right)^2.
\end{equation}
Then
\begin{align}
& \frac{1}{n_1}\sum_{i\in [n]}  Z_i \hat{\epsilon}_i(\beta_n)^2 - \frac{1}{n_1}\sum_{i\in [n]}  Z_i \epsilon_i(1, \beta_n)^2 \\
& =\frac{1}{n_1} \sum_{i\in [n]}  Z_i \left(Y_i-\beta_n D_i-\hat{\phi}(1,\beta_n)-x_i'\hat{\gamma}(1,\beta_n)\right)^2\\
& \quad - \frac{1}{n_1}\sum_{i\in [n]}  Z_i \left(Y_i-\beta_n D_i-{\phi}(1,\beta_n)-x_i'\gamma(1,\beta_n)\right)^2\\
& = \frac{1}{n_1}\sum_{i\in [n]}  Z_i \left({\phi}(1,\beta_n)+x_i'\gamma(1,\beta_n)-\hat{\phi}(1,\beta_n)
 -x_i'\hat{\gamma}(1,\beta_n)\right)\\ & \quad \times \left(2 \left(Y_i-\beta_n D_i\right) -\hat{\phi}(1,\beta_n)-x_i'\hat \gamma(1,\beta_n)-{\phi}(1,\beta_n)-x_i'\gamma(1,\beta_n)\right)\\
& = 2\left(\frac{1}{n_1}\sum_{i\in [n]} Z_i\epsilon_i(1, \beta_n)\tilde{x_i}'\begin{bmatrix}
    {\phi}(1,\beta_n) - \hat{\phi}(1,\beta_n)\\
    \gamma(1,\beta_n) - \hat{\gamma}(1,\beta_n)
\end{bmatrix}\right) \label{231a}
\\ & \quad +  \frac{1}{n_1} \left(\begin{bmatrix}
    {\phi}(1,\beta_n) - \hat{\phi}(1,\beta_n)\\
    \gamma(1,\beta_n) - \hat{\gamma}(1,\beta_n)
\end{bmatrix}'\sum_{i\in [n]}Z_i \tilde{x_i}\tilde{x_i}'\begin{bmatrix}
    {\phi}(1,\beta_n) - \hat{\phi}(1,\beta_n)\\
    \gamma(1,\beta_n) - \hat{\gamma}(1,\beta_n) 
\end{bmatrix}\right).\label{231}
\end{align}

To bound \eqref{231a} and \eqref{231}, we use \eqref{banana} along with \eqref{olsrep} and \eqref{apple1}. This gives, with probability approaching one,  
\begin{equation}\label{eqn:asymptotic_equivalence_variance1}
\frac{1}{n_1}   \frac{  \left(\sum_{i\in [n]}  Z_i \hat{\epsilon}_i(\beta_n)^2 - \sum_{i\in [n]}  Z_i \epsilon_i(1, \beta_n)^2 \right)}{n_1^{-1}{\mathcal{S}^2_1(\beta_{n})} +n_0^{-1} {\mathcal{S}^2_0(\beta_{n})} }=o_p(n),
\end{equation}
An analogous identity holds for 
\be
\frac{1}{n_0}\sum_{i\in [n]}  \left(1-Z_i\right) \hat{\epsilon}_i(\beta_n)^2 - \frac{1}{n_0}\sum_{i\in [n]}  \left(1-Z_i\right)\epsilon_i(0, \beta_n)^2,
\ee
and we similarly conclude that 
\begin{equation}\label{eqn:asymptotic_equivalence_variance2}
\frac{1}{n_0}  \frac{\left(  \sum_{i\in [n]}  (1-Z_i) \hat{\epsilon}_i(\beta_n)^2 - \sum_{i\in [n]}  (1-Z_i) \epsilon_i(0, \beta_n)^2 \right)}{n_1^{-1}{\mathcal{S}^2_1(\beta_{n})} +n_0^{-1} {\mathcal{S}^2_0(\beta_{n})} }=o_p(n),
\end{equation}
We write
\begin{align}
    & \left(\frac{\mathcal{S}^2_1(\beta_n)}{n_1} +\frac{\mathcal{S}^2_0(\beta_n)}{n_0} \right)^{-1} \left( \frac{1}{n_1^2} \sum_{i\in [n]} Z_i \hat{\epsilon}_i(\beta_n)^2  +  \frac{1}{n_0^2}\sum_{i\in [n]} \left(1-Z_i\right) \hat{\epsilon}_i(\beta_n)^2 \right)   \\
    &=  \left(\frac{\mathcal{S}^2_1(\beta_n)}{n_1} +\frac{\mathcal{S}^2_0(\beta_n)}{n_0} \right)^{-1}  \left(  \frac{1}{n_1^2} \sum_{i\in [n]} Z_i \epsilon_i(1, \beta_n)^2  +  \frac{1}{n_0^2} \sum_{i\in [n]} \left(1-Z_i\right)\epsilon_i(0, \beta_n)^2 \right) \label{eq_03062024_a}\\
     & \quad +  \frac{1}{n_1^2} \left(\frac{\mathcal{S}^2_1(\beta_n)}{n_1} +\frac{\mathcal{S}^2_0(\beta_n)}{n_0} \right)^{-1} \sum_{i\in [n]} Z_i \left(\hat{\epsilon}_i(\beta_n)^2 -  \epsilon_i(1, \beta_n)^2 \right) \label{eq_03062024_b} \\
      &\quad  +  \frac{1}{n_0^2} \left(\frac{\mathcal{S}^2_1(\beta_n)}{n_1} +\frac{\mathcal{S}^2_0(\beta_n)}{n_0} \right)^{-1} \sum_{i\in [n]} (1- Z_i) \left(\hat{\epsilon}_i(\beta_n)^2 -  \epsilon_i(0, \beta_n)^2 \right)  \label{eq_03062024_b2}
\end{align}
Using a similar argument as in the proof of Lemma \ref{lemma:varianceconsistency}, specifically \eqref{b96} and the following lines, it is routine to show that \eqref{eq_03062024_a} converges to 1 in probability. We note that this argument uses \eqref{d1ii}.  The terms in \eqref{eq_03062024_b} and \eqref{eq_03062024_b2} converge to 0 in probability by \eqref{eqn:asymptotic_equivalence_variance1} and \eqref{eqn:asymptotic_equivalence_variance2}. This completes the proof.
\end{proof}

\begin{theorem}
    For every $\alpha\in (0,1)$,
\begin{equation}
    \lim_{\kappa \rightarrow 0^+}
    \lim_{n\rightarrow \infty}   
   \sup_{\beta \in \R} 
   \sup_{ \{\theta \in \Theta_n : \cace(\theta)=\beta\}} \P_{\theta}
   \left(\big|\Delta_{\reg}(\beta)\big| > z_{1-\alpha/2} - \kappa \right)\leq \alpha.\label{arstat2}
\end{equation}
\end{theorem}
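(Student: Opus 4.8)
The plan is to mirror the proof of \Cref{thm:clt2}, replacing each ingredient from the unadjusted analysis by its covariate-adjusted counterpart from \Cref{lemma:cov_adj_clt}. (Note that the parameter space in the statement should read $\Xi_n$ rather than $\Theta_n$, since we are in the regression-adjusted setting.) For any $x, y > 0$ and any $\theta \in \Xi_n$ with $\cace(\theta) = \beta$, the elementary bound
\[
\P_\theta\big(|\Delta_\reg(\beta)| > y\big) \le \P_\theta\!\left( \frac{\sqrt{\Var(\tilde{\tau}_\reg(\beta))}}{\hat\sigma_\reg(\beta)} \ge 1 + x \right) + \P_\theta\!\left( \left| \frac{\hat\tau_\reg(\beta)}{\sqrt{\Var(\tilde{\tau}_\reg(\beta))}} \right| > \frac{y}{1+x} \right)
\]
holds exactly as in \eqref{eqn:02092024_a}, where $\tilde{\tau}_\reg$ is the infeasible regression-adjusted estimator. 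The left-hand side does not depend on $x$, so it suffices to (i) show that the first term vanishes uniformly for each fixed $x > 0$, and (ii) show that the second term has limit, as $n \to \infty$ and then $x \to 0$, equal to $\P(|\mathcal{Z}| \ge y)$; taking $y = z_{1-\alpha/2} - \kappa$, letting $\kappa \to 0^+$, and invoking continuity of the standard normal CDF then yields the claim.

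For (i), recall from \eqref{adjvarrep} that $\Var(\tilde{\tau}_\reg(\beta)) \le \mathcal{S}_1^2(\beta)/n_1 + \mathcal{S}_0^2(\beta)/n_0$ because $\mathcal{S}_{10}^2(\beta) \ge 0$. On the other hand, $\hat\sigma_\reg^2(\beta)$ is precisely the numerator of the ratio in \eqref{lastclaim} (using $Z_i \hat\epsilon_i(\beta)^2 = Z_i \hat\epsilon_i(1,\beta)^2$ and $(1-Z_i)\hat\epsilon_i(\beta)^2 = (1-Z_i)\hat\epsilon_i(0,\beta)^2$), so \eqref{lastclaim} gives $\hat\sigma_\reg^2(\beta) / \big( \mathcal{S}_1^2(\beta)/n_1 + \mathcal{S}_0^2(\beta)/n_0 \big) \overset{p}{\to} 1$ along any subsequence; hence $\Var(\tilde{\tau}_\reg(\beta)) / \hat\sigma_\reg^2(\beta) \le 1 + o_p(1)$, and the standard contradiction-plus-subsequence device used in \Cref{theorem:ARstatisticsCLT} upgrades this to the uniform statement that the first term vanishes for each fixed $x > 0$. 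For (ii), claim \eqref{lemma:cov_adj_clt_hat} of \Cref{lemma:cov_adj_clt} states exactly that $\hat\tau_\reg(\beta)/\sqrt{\Var(\tilde{\tau}_\reg(\beta))}$ is asymptotically standard normal along any subsequence; the same contradiction argument makes this uniform over $\{\theta \in \Xi_n : \cace(\theta) = \beta\}$ and over $\beta \in \R$, and continuity of the normal quantile function handles the limit $x \to 0$. Assembling (i) and (ii) as in the closing display of the proof of \Cref{thm:clt2} finishes the argument.

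The genuine work has already been carried out inside \Cref{lemma:cov_adj_clt}: the asymptotic equivalence \eqref{lemma:cov_asymptotic_equivalence} of the feasible and infeasible regression-adjusted estimators (which controls the OLS coefficient fluctuation $\hat\gamma - \gamma$ via the least-squares algebra of \Cref{lemma14} and the eigenvalue lower bound \eqref{viii-covariate}), the combinatorial CLT \eqref{lemma:cov_adj_clt_ae} for the infeasible estimator through \Cref{theorem:clt}, and the variance consistency \eqref{lastclaim}. Given those inputs, the present theorem is essentially bookkeeping, and the only point demanding care is the choice of normalizing sequence: both the CLT and the variance estimator must be referred to $\Var(\tilde{\tau}_\reg(\beta))$ — the infeasible variance — rather than to $\Var(\hat\tau_\reg(\beta))$, and it is \eqref{adjvarrep} together with \eqref{lastclaim} that makes this consistent. (One could alternatively obtain (ii) directly from \eqref{lemma:cov_adj_clt_ae} and \eqref{lemma:cov_asymptotic_equivalence} via \Cref{lemma:asymptoticequi}, but quoting \eqref{lemma:cov_adj_clt_hat} is cleaner.)
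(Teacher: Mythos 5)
Your proposal is correct and is exactly the argument the paper intends: the paper's proof of this theorem simply states that, given Lemma \ref{lemma:cov_adj_clt}, the argument is nearly identical to that of \Cref{thm:clt2}, and your write-up fills in that omitted routine by substituting the covariate-adjusted ingredients (\eqref{adjvarrep}, \eqref{lastclaim}, and \eqref{lemma:cov_adj_clt_hat}) into the decomposition \eqref{eqn:02092024_a}, normalizing throughout by the infeasible variance $\Var(\tilde{\tau}_\reg(\beta))$. Your observation that $\Theta_n$ in the statement should read $\Xi_n$ is also correct.
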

\begin{proof}
Given \Cref{lemma:cov_adj_clt}, the proof of \eqref{arstat2} is nearly identical to the proof of \Cref{thm:clt2}, so we omit it. 
\end{proof}
   

\subsection{Randomization Distribution}

\begin{lemma}\label{l:d5}
    Let $(\theta_n)_{n=1}^\infty$ be a sequence of models with covariates such that $\theta_n \in \Xi_n$ for all $n \in \mathbb{N}$. Denote $\beta_n = \cace(\theta_n)$. Let $(\theta_{n_k})_{k=1}^\infty$ be any of its subsequences.  
    For each $a\in \{0,1\}$, we have 
    \begin{equation}\label{lemma:asconv_0311a}
       \lim_{k \rightarrow \infty}  \frac{1}{\sigma^2_{\epsilon(a,\beta_{n_k})}}\left| \frac{1}{n_{ak}}\sum_{i\in{\mathcal T(a)}}\hat{\epsilon}^2_i(a,\beta_{n_k}) - \frac{1}{n_k}\sum_{i\in [n_k]}\epsilon_i^2(a,\beta_{n_k})  \right| =  0,
    \end{equation}
\end{lemma}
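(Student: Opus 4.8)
The plan is to fix $a\in\{0,1\}$, re-index the subsequence $(\theta_{n_k})$ as $(\theta_n)$ exactly as in the other proofs of this section, and abbreviate $\beta_n=\cace(\theta_n)$, $\epsilon_i(a)=\epsilon_i(a,\beta_n)$, $\sigma_{\epsilon(a)}=\sigma_{\epsilon(a,\beta_n)}$, and $\hat\mu_{\epsilon(a)}=n_a^{-1}\sum_{i\in\mathcal T(a)}\epsilon_i(a)$; I will prove the (almost sure) statement $\sigma_{\epsilon(a)}^{-2}\bigl|n_a^{-1}\sum_{i\in\mathcal T(a)}\hat\epsilon_i(a,\beta_n)^2-n^{-1}\sum_{i\in[n]}\epsilon_i(a)^2\bigr|=o_{a.s.}(1)$. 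The triangle inequality splits this into two pieces: (i) replacing $\hat\epsilon_i$ by $\epsilon_i$ inside the treatment-group average, and (ii) replacing the treatment-group average $n_a^{-1}\sum_{i\in\mathcal T(a)}\epsilon_i(a)^2$ by the full-sample average. Piece (ii) is a direct application of Lemma \ref{lemma:tail_inequality} with the deterministic vector $(\epsilon_i(a))_{i=1}^n$ in place of $(w_i(a))_{i=1}^n$: since $\mu_{\epsilon(a)}=0$ by \eqref{orthogonalresiduals} and $n^{-1}\sum_i\epsilon_i(a)^4\le B^4\sigma_{\epsilon(a)}^4$ by \eqref{iv-covariate}, the fourth-moment hypothesis holds, and \eqref{almostsureconvergence:20240206_a}--\eqref{almostsureconvergence:20240206_b} give $\sigma_{\epsilon(a)}^{-1}\hat\mu_{\epsilon(a)}=o_{a.s.}(1)$ together with $\sigma_{\epsilon(a)}^{-2}\bigl|n_a^{-1}\sum_{i\in\mathcal T(a)}(\epsilon_i(a)-\hat\mu_{\epsilon(a)})^2-n^{-1}\sum_i\epsilon_i(a)^2\bigr|=o_{a.s.}(1)$; combining these through $n_a^{-1}\sum_{i\in\mathcal T(a)}\epsilon_i(a)^2=n_a^{-1}\sum_{i\in\mathcal T(a)}(\epsilon_i(a)-\hat\mu_{\epsilon(a)})^2+\hat\mu_{\epsilon(a)}^2$ settles (ii).

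For piece (i), I would use that on the event $Z_i=a$ one has $\hat\epsilon_i(a,\beta_n)=\epsilon_i(a)-\tilde x_i'(\hat\psi(a,\beta_n)-\psi(a,\beta_n))$ with $\tilde x_i=(1,x_i')'$, and, by \eqref{olsrep}, $\hat\psi(a,\beta_n)-\psi(a,\beta_n)=M_a^{-1}b_a$ where $M_a=n_a^{-1}\sum_{i\in\mathcal T(a)}\tilde x_i\tilde x_i'$ and $b_a=n_a^{-1}\sum_{i\in\mathcal T(a)}\tilde x_i\epsilon_i(a)$ (the event that $M_a$ is singular has probability tending to $0$, and is a.s.\ excluded for large $n$ by the bound below). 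Expanding the square and substituting, the cross and quadratic terms collapse into the identity $n_a^{-1}\sum_{i\in\mathcal T(a)}\hat\epsilon_i(a,\beta_n)^2=n_a^{-1}\sum_{i\in\mathcal T(a)}\epsilon_i(a)^2-b_a'M_a^{-1}b_a$, so (i) reduces to showing $b_a'M_a^{-1}b_a=o_{a.s.}(\sigma_{\epsilon(a)}^2)$. Here $\|M_a^{-1}\|_2=O_{a.s.}(1)$: applying the argument of Lemma \ref{lemma:tail_inequality} entrywise — to the diagonal sequences directly and to the off-diagonal ones through the polarization $x_{si}x_{ti}=\tfrac14\bigl((x_{si}+x_{ti})^2-(x_{si}-x_{ti})^2\bigr)$, whose fourth-moment ratios are bounded via \eqref{vii-covariate} and the lower bound $n^{-1}\sum_i(x_{si}\pm x_{ti})^2\ge 2\tilde\epsilon$ from \eqref{viii-covariate} — shows $M_a=n^{-1}\sum_i\tilde x_i\tilde x_i'+o_{a.s.}(1)$ in operator norm ($k$ fixed), and by \eqref{vi-covariate} the limiting matrix is block diagonal with blocks $1$ and $n^{-1}\sum_i x_ix_i'$, whose smallest eigenvalue is at least $\min(1,\tilde\epsilon)$ by \eqref{viii-covariate}; hence a.s.\ $\lambda_{\min}(M_a)\ge\tfrac12\min(1,\tilde\epsilon)$ eventually, and since $b_a'M_a^{-1}b_a\le\|M_a^{-1}\|_2\|b_a\|_2^2$ it remains to show $\|b_a\|_2^2=o_{a.s.}(\sigma_{\epsilon(a)}^2)$.

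The hard part is this last bound, and it is the only place where the argument departs substantively from the in-probability analysis in Lemma \ref{lemma:cov_adj_clt}. The first coordinate of $b_a$ is $\hat\mu_{\epsilon(a)}=o_{a.s.}(\sigma_{\epsilon(a)})$ by \eqref{almostsureconvergence:20240206_a}; for the coordinate indexed by $x_{si}$, the crucial fact is that the population value $n^{-1}\sum_i x_{si}\epsilon_i(a)$ vanishes by \eqref{orthogonalresiduals}. I would exploit this by writing $x_{si}\epsilon_i(a)=\tfrac14\bigl((\lambda_n x_{si}+\lambda_n^{-1}\epsilon_i(a))^2-(\lambda_n x_{si}-\lambda_n^{-1}\epsilon_i(a))^2\bigr)$ with the \emph{balanced scale} $\lambda_n=(\sigma_{\epsilon(a)}/\sigma_{x_s})^{1/2}$, chosen so that both polarized vectors have mean zero (by \eqref{vi-covariate} and \eqref{orthogonalresiduals}), variance exactly $2\sigma_{x_s}\sigma_{\epsilon(a)}$ (again using $\sum_i x_{si}\epsilon_i(a)=0$), and fourth-moment ratios bounded uniformly in $n$ — this uses \eqref{vii-covariate} and \eqref{iv-covariate} for the numerators and the lower bound $\sigma_{x_s}^2\ge\tilde\epsilon$ from \eqref{viii-covariate} for the denominator. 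Applying Lemma \ref{lemma:tail_inequality} to each polarized vector and subtracting then gives $n_a^{-1}\sum_{i\in\mathcal T(a)}x_{si}\epsilon_i(a)=n^{-1}\sum_i x_{si}\epsilon_i(a)+o_{a.s.}(\sigma_{x_s}\sigma_{\epsilon(a)})=o_{a.s.}(\sigma_{x_s}\sigma_{\epsilon(a)})$, so this coordinate contributes $o_{a.s.}(\sigma_{x_s}^2\sigma_{\epsilon(a)}^2)=o_{a.s.}(\sigma_{\epsilon(a)}^2)$ to $\|b_a\|_2^2$ because $\sigma_{x_s}^2\lesssim\sqrt A$ by \eqref{vii-covariate}. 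Summing over the $k+1$ fixed coordinates yields $\|b_a\|_2^2=o_{a.s.}(\sigma_{\epsilon(a)}^2)$ and completes the proof. The subtlety — and the main obstacle — is precisely that a crude sum-of-squares bound on $b_a$ would only give $o_{a.s.}((\sigma_{x_s}^2+\sigma_{\epsilon(a)}^2)^2)$, which is useless after dividing by $\sigma_{\epsilon(a)}^2$ when $\sigma_{\epsilon(a)}^2$ is small; one must control the OLS cross term at the geometric-mean rate $\sigma_{x_s}\sigma_{\epsilon(a)}$, which is why both the eigenvalue bound \eqref{viii-covariate} and the scale-balanced polarization are needed.
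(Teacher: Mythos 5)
Your proof is correct and follows essentially the same route as the paper's: the same residual decomposition (your single term $b_a'M_a^{-1}b_a$ is exactly the sum of the paper's quadratic and cross terms \eqref{236} and \eqref{2555}), the same use of residual orthogonality \eqref{orthogonalresiduals} and the Cauchy--Schwarz bound at the geometric-mean scale $\sigma_{x_s}\sigma_{\epsilon(a,\beta_n)}$, and the same eigenvalue argument via \eqref{orange2} and \eqref{viii-covariate} for $\|M_a^{-1}\|_2=O_{a.s.}(1)$. The only substantive difference is that where the paper invokes the almost-sure convergence lemma of \cite{wu2021randomization} directly on the products $\tilde x_{si}\epsilon_i(a,\beta_n)$ after bounding their second moment (as in \eqref{D.242}), you instead apply it to scale-balanced polarized squares; this is a more careful verification of that lemma's moment hypotheses, but it does not change the structure of the argument.
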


\begin{proof}
For simplicity, we map the indices of the subsequence back to $\{1,2,3,\dots\}$. We begin with the identity  
\begin{align}
   &  \frac{1}{n_a}\sum_{i\in{\mathcal T(a)}}\hat{\epsilon}^2_i(a,\beta_n) - \frac{1}{n}\sum_{i\in [n]}\epsilon^2_i(a,\beta_n)\\
    & = \frac{1}{n_a}\sum_{i\in{\mathcal T}(a)} \epsilon^2_i(a,\beta_n)  - \frac{1}{n}\sum_{i\in [n]}\epsilon^2_i(a,\beta_n)\\ 
    & \quad +\frac{1}{n_a} \sum_{i\in{\mathcal T(a)}}\big(x_i'(\hat{\gamma}(a,\beta_n)-\gamma(a,\beta_n))+\hat{\phi}(a,\beta_n)-\phi(a,\beta_n) \big)^2\label{236}\\
     &\quad +\frac{2 }{n_a} \sum_{i\in{\mathcal T(a)}} \epsilon_i(a,\beta_n)\big(x_i'(\hat{\gamma}(a,\beta_n)-\gamma(a,\beta_n))+\hat{\phi}(a,\beta_n)-\phi(a,\beta_n) \big) . \label{2555}
\end{align}

By \eqref{vii-covariate} and  \cite[Lemma A3(i)]{wu2021randomization},
\be\label{orange2}
\frac{1}{n_a}\sum_{i\in{\mathcal T(a)}}x_ix_i' = \frac{1}{n}\sum_{i\in [n]} x_ix_i' + o_{a.s.}(1).
\ee
Hence $n_a^{-1}\sum_{i\in{\mathcal T(a)}}x_ix_i'$ is invertible almost surely, by \eqref{viii-covariate}. 

Recalling the definition of $\tilde{x}_i$, 
we have by a representation analogous to \eqref{olsrep} that, almost surely,  
\begin{align}\label{orange}
\begin{bmatrix}
    \phi(a,\beta_n) - \hat{\phi}(a,\beta_n)\\
    \gamma(a,\beta_n) - \hat{\gamma}(a,\beta_n)
\end{bmatrix} = \left(\frac{1}{n_a}\sum_{i\in{\mathcal T(a)}}\tilde{x}_i\tilde{x}_i'\right)^{-1}\left( \frac{1}{n_a} \sum_{i\in{\mathcal T(a)}}\tilde{x}_i\epsilon(a,\beta_n)\right).   
\end{align}
As a preliminary step, we bound \eqref{orange}. 
Note that for $s\in [k+1]$, we have by the analogue of \eqref{orthogonalresiduals} that 
\begin{equation}\label{carrot}
 \sum_{i\in[n] }\tilde{x}_{si}\epsilon_i(a,\beta_n) =0.
\end{equation}
Further, we note that the sample variance of the collection $(\tilde x_{si} \epsilon_i(a,\beta_n) )_{i=1}^n$ is bounded by the uncentered sample second moment, which is 
\be
\frac{1}{n^2} \sum_{i\in[n]} \big(\tilde{x}_{si}\epsilon_i(a,\beta_n)\big)^2 
\le  \frac{1}{n} \sqrt{\frac{1}{n}\sum_{i\in[n]}\tilde{x}_{si}^4} \times \sqrt{
       \frac{1}{n}\sum_{i\in[n]}\epsilon_i^4(a,\beta_n) }.
\ee
Then from our assumptions \eqref{iv-covariate} and \eqref{vii-covariate}, we get 
\be\label{D.242}
\frac{1}{\sigma^2_{\epsilon(a,\beta_n)} n^2} \sum_{i\in[n]} \big(\tilde{x}_{si}\epsilon_i(a,\beta_n)\big)^2 = O\left( \frac{1}{n} \right).
\ee

By \eqref{orange2}, \eqref{carrot}, \eqref{D.242}, and \cite[Lemma A3(i)]{wu2021randomization}, we conclude that 
\be\label{pineapple}
 \frac{1}{\sigma_{\epsilon(a,\beta_n)} }
\begin{bmatrix}
  \phi(a,\beta_n) - \hat{\phi}(a,\beta_n)\\
    \gamma(a,\beta_n) - \hat{\gamma}(a,\beta_n)
\end{bmatrix} 
= o_{a.s.}(1).
\ee 
We can now bound \eqref{236}. Using \eqref{orange2} and \eqref{pineapple}, we get
\begin{align}
   &\frac{1}{\sigma^{2}_{\epsilon(a,\beta_n)} n_a}\sum_{i\in{\mathcal T(a)}}\left(x_i'\left(\hat{\gamma}(a,\beta_n)-\gamma(a,\beta_n)\right)+\hat{\phi}(a,\beta_n)-\phi(a,\beta_n)\right)^2\\
   &=  \frac{1}{\sigma^{2}_{\epsilon(a,\beta_n)}}\begin{bmatrix}
    \phi(a,\beta_n) - \hat{\phi}(a,\beta_n)\\
    \gamma(a,\beta_n) - \hat{\gamma}(a,\beta_n)
\end{bmatrix}' \left(\frac{1}{n_a}\sum_{i\in{\mathcal T(a)}}\tilde{x_i}\tilde{x_i}'\right)\begin{bmatrix}
    \phi(a,\beta_n) - \hat{\phi}(a,\beta_n)\\
    \gamma(a,\beta_n) - \hat{\gamma}(a,\beta_n)
\end{bmatrix}= o_{a.s.}(1).\label{corn1}
\end{align}
Next, \eqref{2555} is, after applying \eqref{orange},
\[
2 \left( \frac{1}{n_a} \sum_{i\in{\mathcal T(a)}} \epsilon_i(a,\beta_n) \tilde{x}_i'\right)
\left(\frac{1}{n_a}\sum_{i\in{\mathcal T(a)}}\tilde{x}_i\tilde{x}_i'\right)^{-1}\left( \frac{1}{n_a} \sum_{i\in{\mathcal T(a)}}\tilde{x}_i\epsilon_i(a,\beta_n)\right).
\]
Following the  argument used to bound \eqref{orange} to estimate the previous line, we find that
\begin{equation}\label{corn2}
\frac{2 }{\sigma^{2}_{\epsilon(a,\beta_n)} n_a} \sum_{i\in{\mathcal T(a)}} \epsilon_i(a,\beta_n)\big(x_i'(\hat{\gamma}(a,\beta_n)-\gamma(a,\beta_n))+\hat{\phi}(a,\beta_n)-\phi(a,\beta_n) \big) = o_{a.s.}(1).
\end{equation}

Using \cite[Lemma A3(i)]{wu2021randomization} and \eqref{iv-covariate}, we have
\begin{equation}\label{corn3}
    \frac{1}{\sigma^{2}_{\epsilon(a,\beta_n)}n_a}\sum_{i\in{\mathcal T}(a)} \epsilon_i(a,\beta_n)^2  - \frac{1}{\sigma^{2}_{\epsilon(a,\beta_n)}n}\sum_{i\in [n]}\epsilon_i(a,\beta_n)^2 =o_{a.s.}(1).
\end{equation}
Inserting \eqref{corn1}, \eqref{corn2}, and \eqref{corn3} into \eqref{2555}, we find.
\be
\frac{1}{\sigma^{2}_{\epsilon(a,\beta_n)}n_a}\sum_{i\in{\mathcal T(a)}}\hat{\epsilon}_i^2(a,\beta_n) - \frac{1}{n\sigma^{2}_{\epsilon(a,\beta_n)}}\sum_{i\in [n]}\epsilon_i^2(a,\beta_n)=o_{a.s.}(1),
\ee
 completes the proof of \eqref{lemma:asconv_0311a}.
\end{proof}
Recall the definition of $\epsilon^*(\beta_n)$ from (\ref{eqn:residual}). Recall the definitions of $\psi$, $\hat \psi$, and $\psi^*$ from \eqref{psidef}. We note the following algebraic identity:
\begin{align}
\begin{split}\label{shake}
       \sum_{i\in [n]}\epsilon^*_i(\beta_n)^2   &= \sum_{a\in \{0,1\}}\sum_{i\in{\mathcal T(a)}} \big( Y_i-\beta_n D_i-\tilde x_i'\psi^*(a,\beta_n)\big)^2\\
       & = \sum_{a\in \{0,1\}}\sum_{i\in{\mathcal T(a)}} \big( Y_i-\beta_n D_i-\tilde x_i'\hat{\psi}(a,\beta_n)\big)^2\\
       & +\sum_{a\in \{0,1\}}\sum_{i\in{\mathcal T(a)}} \big( \tilde x_i'\psi^*(\beta_n)-\tilde x_i'\hat{\psi}(a,\beta_n)\big)^2 \\
\end{split}
\end{align}

\begin{lemma}\label{lemma:maxima_randomization_mcesidual} 
    Let $(\theta_n)_{n=1}^\infty$ be a sequence of models with covariates such that $\theta_n \in \Xi_n$ for all $n \in \mathbb{N}$. Denote $\beta_n = \cace(\theta_n)$. Let $(\theta_{n_k})_{k=1}^\infty$ be any of its subsequences. For each $a\in \{0,1\}$, we have
\begin{equation}
\frac{1}{n_{ak}^2 } \frac{\max_{i\in [n_k]} \epsilon^*_i(\beta_{n_k}) ^2}{  \Var^*\big(\tilde{\tau}^*_\reg(\beta_{n_k}) \big) } \overset{a.s.}{\to}0.
\end{equation}
\end{lemma}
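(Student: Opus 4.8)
The plan is to mirror the proof of Lemma~\ref{lemma:lyapunov_randomization}, with the pooled regression residuals $\epsilon^*_i(\beta_n)$ in place of the centered observed outcomes and with the variance lower bound supplied by the exact identity \eqref{shake} together with Lemma~\ref{l:d5}. Write $\psi^*(\beta_n)$ for the pooled least-squares coefficient regressing $Y_i-\beta_n D_i$ on $\tilde x_i=(1,x_i')'$ (by definition this does not depend on the arm), so that $\epsilon^*_i(\beta_n)=(Y_i-\beta_n D_i)-\tilde x_i'\psi^*(\beta_n)$, while on the event $Z_i=a$ one has $\hat\epsilon_i(a,\beta_n)=(Y_i-\beta_n D_i)-\tilde x_i'\hat\psi(a,\beta_n)$. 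Subtracting gives the pointwise identity $\epsilon^*_i(\beta_n)=\hat\epsilon_i(a,\beta_n)+\tilde x_i'\big(\hat\psi(a,\beta_n)-\psi^*(\beta_n)\big)$ on $Z_i=a$, hence
\[
\max_{i\in[n]}\epsilon^*_i(\beta_n)^2\ \lesssim\ \max_{a\in\{0,1\}}\ \max_{i\in\mathcal T(a)}\hat\epsilon_i(a,\beta_n)^2\ +\ \Big(\max_{i\in[n]}\|\tilde x_i\|_2^2\Big)\max_{a\in\{0,1\}}\big\|\hat\psi(a,\beta_n)-\psi^*(\beta_n)\big\|_2^2 .
\]
Since $\hat\psi(a,\beta_n)$ is the within-arm-$a$ least-squares coefficient, the cross terms in \eqref{shake} vanish arm by arm, so
\[
\sum_{i\in[n]}\epsilon^*_i(\beta_n)^2\ =\ \sum_{a\in\{0,1\}}\Big(\sum_{i\in\mathcal T(a)}\hat\epsilon_i(a,\beta_n)^2+\sum_{i\in\mathcal T(a)}\big(\tilde x_i'(\psi^*(\beta_n)-\hat\psi(a,\beta_n))\big)^2\Big).
\]
Because $\Var^*\big(\tilde{\tau}^*_\reg(\beta_n)\big)=\tfrac{n}{n_1n_0(n-1)}\sum_{i\in[n]}\epsilon^*_i(\beta_n)^2$ and $n_1,n_0,n_a$ are all of order $n$ by Assumption~\ref{a:cr}, it suffices to prove $\max_{i\in[n]}\epsilon^*_i(\beta_n)^2=o_{a.s.}\big(\sum_{i\in[n]}\epsilon^*_i(\beta_n)^2\big)$ (after mapping subsequence indices back to $\mathbb{N}$, as elsewhere; we tacitly assume $\sigma_{\epsilon(a,\bar\beta)}>0$, the degenerate case being trivial).

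Next I would lower-bound the denominator arm by arm. Lemma~\ref{l:d5} gives $\sum_{i\in\mathcal T(a)}\hat\epsilon_i(a,\beta_n)^2=n_a\,\sigma^2_{\epsilon(a,\beta_n)}\big(1+o_{a.s.}(1)\big)$. Moreover, as established in the proof of Lemma~\ref{l:d5} from \cite[Lemma~A3]{wu2021randomization}, \eqref{vii-covariate}, and \eqref{viii-covariate}, for all large $n$ the Gram matrix $\tfrac1{n_a}\sum_{i\in\mathcal T(a)}\tilde x_i\tilde x_i'$ is invertible with smallest eigenvalue bounded below by a positive constant, almost surely; hence $\sum_{i\in\mathcal T(a)}\big(\tilde x_i'(\psi^*(\beta_n)-\hat\psi(a,\beta_n))\big)^2\gtrsim n_a\,\|\psi^*(\beta_n)-\hat\psi(a,\beta_n)\|_2^2$ a.s. Combining with the displayed identity, $\sum_{i\in[n]}\epsilon^*_i(\beta_n)^2\gtrsim n\,\sigma^2_{\epsilon(a,\beta_n)}$ and $\sum_{i\in[n]}\epsilon^*_i(\beta_n)^2\gtrsim n\,\|\psi^*(\beta_n)-\hat\psi(a,\beta_n)\|_2^2$ almost surely, for each $a\in\{0,1\}$.

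It then remains to dominate the two pieces of the numerator by these bounds. For the covariate piece, choose $a$ attaining $\max_a\|\psi^*(\beta_n)-\hat\psi(a,\beta_n)\|_2^2$ (the piece is zero if this maximum vanishes); then
\[
\frac{\big(\max_{i\in[n]}\|\tilde x_i\|_2^2\big)\,\|\psi^*(\beta_n)-\hat\psi(a,\beta_n)\|_2^2}{\sum_{i\in[n]}\epsilon^*_i(\beta_n)^2}\ \lesssim\ \frac{\max_{i\in[n]}\|\tilde x_i\|_2^2}{n}\ =\ o_{a.s.}(1)
\]
by \eqref{xtilde2norm}. For the residual-maximum piece, write $\hat\epsilon_i(a,\beta_n)=\epsilon_i(a,\beta_n)+\tilde x_i'\big(\psi(a,\beta_n)-\hat\psi(a,\beta_n)\big)$ on $\mathcal T(a)$; then $\max_{i\in[n]}\epsilon_i(a,\beta_n)^2\le B^2 n^{1/2}\sigma^2_{\epsilon(a,\beta_n)}$ by \eqref{iv-covariate} (argued as in Lemma~\ref{lemma:max}), $\max_{i\in[n]}\|\tilde x_i\|_2^2=o(n)$ by \eqref{xtilde2norm}, and $\|\psi(a,\beta_n)-\hat\psi(a,\beta_n)\|_2^2=o_{a.s.}\big(\sigma^2_{\epsilon(a,\beta_n)}\big)$ by \eqref{pineapple}, whence $\max_{i\in\mathcal T(a)}\hat\epsilon_i(a,\beta_n)^2=o_{a.s.}\big(n\,\sigma^2_{\epsilon(a,\beta_n)}\big)$; dividing by $\sum_{i\in[n]}\epsilon^*_i(\beta_n)^2\gtrsim n\,\sigma^2_{\epsilon(a,\beta_n)}$ again yields $o_{a.s.}(1)$. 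Summing over $a\in\{0,1\}$ gives the claim.

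The main obstacle is the middle step. A priori the pooled coefficient $\psi^*(\beta_n)$ may be far from the within-arm coefficients $\hat\psi(a,\beta_n)$ — exactly the situation under treatment-effect heterogeneity, where the two arms carry different covariate--outcome slopes — so the covariate piece of the numerator need not be small on its own. The point is that this very discrepancy is recorded in the denominator through \eqref{shake}: the between-arm residual mass $\sum_a\sum_{i\in\mathcal T(a)}\big(\tilde x_i'(\psi^*(\beta_n)-\hat\psi(a,\beta_n))\big)^2$ is of order $n\,\|\psi^*(\beta_n)-\hat\psi(a,\beta_n)\|_2^2$, which absorbs the numerator up to the vanishing factor $\max_i\|\tilde x_i\|_2^2/n$. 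Identifying and exploiting this self-cancellation is the crux; every other estimate parallels Lemma~\ref{lemma:lyapunov_randomization}.
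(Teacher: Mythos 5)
Your proposal is correct and follows essentially the same route as the paper: the same decomposition of $\epsilon_i^*(\beta_n)$ (merely regrouping $\epsilon_i(a,\beta_n)+\tilde x_i'(\psi(a,\beta_n)-\hat\psi(a,\beta_n))$ into $\hat\epsilon_i(a,\beta_n)$), the same use of the identity \eqref{shake} together with the Gram-matrix eigenvalue bound to absorb the between-arm coefficient discrepancy $\|\hat\psi(a,\beta_n)-\psi^*(\beta_n)\|_2^2$ into the denominator, and the same auxiliary estimates \eqref{pineapple}, \eqref{xtilde2norm}, \eqref{iv-covariate}, and Lemma~\ref{l:d5}. The ``self-cancellation'' you identify as the crux is exactly the mechanism in the paper's treatment of \eqref{peach2} via \eqref{cheese2}.
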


\begin{proof}
For simplicity, we map the indices of the subsequence back to $\{1,2,3,\dots\}$. Fix $i \in [n]$. If $Z_i = a$, then 
\begin{align}
\begin{split}\label{turnip}
\epsilon_i^*(\beta_n)& = Y-\beta D_i -\phi^*_i(\beta_n) -x_i'\gamma^*(\beta_n)   \\  &=  \epsilon_i(a, \beta_n)  + \phi(a, \beta_n) + x_i'\gamma(a,\beta_n)   -\phi^*_i(a,\beta_n) - x_i'\gamma^*_i(a,\beta_n) \\
& = \epsilon_i(a, \beta_n) + \tilde{x}_i' 
\big( \psi(a,\beta_n)  -  \psi^*(a,\beta_n)
\big).
\end{split}
\end{align}

We obtain
\begin{align}
    &  \frac{1}{n_a^2}
    \frac{\max_i\epsilon_i^*(\beta_n)^2}{\Var^*\big(\tilde{\tau}^*_\reg(\beta_n)\big)}\\
    &  \lesssim \frac{1}{n^2}  \sum_{a\in\{0,1\}} \frac{\max_{i\in [n]}\epsilon^2_i(a,\beta_n)}{\Var^*(\tilde{\tau}^*_\reg(\beta_n))} +  \frac{1}{n^2}  \sum_{a\in\{0,1\}}  \frac{\max_{i\in [n]}\left(
    \tilde{x}_i' 
\big( \psi(a,\beta_n) -  \hat \psi(a,\beta_n) \big)
    \right)^2}{\Var^*(\tilde{\tau}^*_\reg(\beta_n))} \label{peach1}
    \\& \quad + \frac{1}{n^2}  \sum_{a\in\{0,1\}} \frac{\max_{i\in [n]}\left(
    \tilde{x}_i'
\big(
\hat \psi(a, \beta_n) -  \psi^*(\beta_n)
\big)
    \right)^2}{\Var^*(\tilde{\tau}^*_\reg(a,\beta_n))}.\label{peach2}
\end{align}

We begin by bounding the first sum in \eqref{peach1}. We have
\begin{align}
  & \frac{1}{n^2}  \sum_{a\in\{0,1\}} \frac{\max_{i\in [n]}\epsilon_i^2(a,\beta_n)}{\Var^*(\tilde{\tau}^*_\reg(\beta_n))}\leq \frac{1}{n} \sum_{a\in \{0,1\}}  \left(\frac{\max_{i\in [n]}\epsilon_i^2(a,\beta_n)}{\frac{1}{n_a}\sum_{i\in{\mathcal T(a)}} \hat \epsilon^2(a,\beta_n) }\right)\\
  \leq & \frac{1}{n} \sum_{a\in \{0,1\}}  \left(\frac{\max_{i\in [n]}\epsilon_i^2(a,\beta_n)}{\frac{1}{n}\sum_{i\in[n]}\epsilon^2(a,\beta_n)(1+o_{a.s.}(1)) }\right)=o_{a.s.}(1)
\end{align}
by Lemma \ref{l:d5} and \eqref{d1ii}.

Next, by the Cauchy--Schwarz inequality, we conclude that the second sum in \eqref{peach1} satisfies
\begin{align}
&\frac{1}{n}  \sum_{a\in \{0,1\}} 
\frac{\max_{i\in [n]}\|\tilde{x_i}\|^2_2 \times 
\| 
\psi(a,\beta_n)- \hat \psi(a,\beta_n)
\|_2^2
}
{\sigma^2_{\epsilon(a,\beta_n)}(1+o_{a.s.}(1))}=o_{a.s.}(1)
\end{align}
by \eqref{pineapple} and (\ref{xtilde2norm}).

Finally for \eqref{peach2}, we note that \eqref{shake} implies that
\be\label{cheese2}
\sum_{i\in [n]} \epsilon_i^*(\beta_n)^2  \ge 
\sum_{a \in \{0,1\}}
\sum_{i \in \mathcal T(a) } 
\big[\hat \psi(a,\beta_n) - \psi^*(\beta_n) \big]' \tilde{x}_i\tilde{x}_i'\big[\hat \psi(a,\beta_n) - \psi^*(\beta_n) \big].
\ee
Using the Cauchy--Schwarz inequality in the numerator and \eqref{cheese2} in the denominator, we find that it is bounded by 
\begin{align}
 \frac{n_1 n_0 (n-1)}{n^3} \sum_{a \in \{0,1\}}   \frac{\max_{i\in [n]}\|\tilde{x_i}\|^2_2 \times  \| \hat \psi(a,\beta_n) -  \psi^*(\beta_n)  \|_2^2 } 
{ \sum_{i \in \mathcal T(a) } 
\big[\hat \psi(a,\beta_n) - \psi^*(\beta_n) \big]' \tilde{x}_i\tilde{x}_i'\big[\hat \psi(a,\beta_n) - \psi^*(\beta_n) \big]}.\label{lastequation}
\end{align}
Recall that \eqref{orange2} states
\be
\frac{1}{n_a}\sum_{i\in{\mathcal T(a)}}\Tilde{x}_i\Tilde{x}_i'=\frac{1}{n}\sum_{i\in [n]}\Tilde{x_i}x_i' + o_{a.s.}(1).
\ee
By assumption \eqref{viii-covariate}, 
\begin{align}
    \big[\hat \psi(a,\beta_n) - \psi^*(\beta_n) \big]'\left( \frac{1}{n_a} \sum_{i\in{\mathcal T(a)}}\tilde{x}_i\tilde{x}_i' \right) &\big[\hat \psi(a,\beta_n) - \psi^*(\beta_n) \big]\\ &\geq \big(\tilde \epsilon + o_{a.s.}(1) \big) \| \hat \psi(a) - \psi^*(\beta_n) \|_2^2 .
\end{align}
Using \eqref{xtilde2norm}, we find that \eqref{lastequation} is bounded by 
\begin{equation}
\frac{1}{n} \max_{i\in [n]}\|\tilde{x_i}\|_2^2 \times O_{a.s.}(1)=o_{a.s.}(1).
\end{equation}
Then all terms in \eqref{peach1} and \eqref{peach2} are $o_{a.s.}(1)$, which completes the proof.
\end{proof}

\begin{lemma}\label{lemma:clt_cov}
    Let $(\theta_n)_{n=1}^\infty$ be a sequence of models with covariates such that $\theta_n \in \Xi_n$ for all $n \in \mathbb{N}$. Denote $\beta_n = \cace(\theta_n)$. Let $(\theta_{n_k})_{k=1}^\infty$ be any of its subsequences.   The following claims hold almost surely with respect to the randomness in $Z$. 
\begin{enumerate}
    \item For every $\kappa >0$,
    \begin{equation}\label{d61}
    \lim_{k\rightarrow \infty}
    \P^*\left( \big| \hat \tau_\reg^*(\beta_{n_k})-\tilde \tau_\reg^*(\beta_{n_k}) \big|   \ge \kappa \sqrt{\Var^*\big(\tilde{\tau}^*_\reg(\beta_{n_k})\big)} \right) = 0.
    \end{equation}
    \item For every $x > 0 $,
    \be\label{d62}
    \lim_{k\rightarrow \infty}  
    \left|\P^*\left(
    \big| \tilde \tau_\reg^*(\beta_{n_k}) \big|
    \ge 
    x  \sqrt{\Var^*\big(\tilde{\tau}^*_\reg(\beta_{n_k})\big)}
    \right)
    -\P\big( |\mathcal{Z} | \geq x\big) \right|\to 0.
    \ee
       \item For every $x > 0 $,
    \be\label{d63}
    \lim_{k\rightarrow \infty}  \left|
    \P^*\left(
    \big| \hat\tau_\reg^*(\beta_{n_k}) \big|
    \ge 
    x  \sqrt{\Var^*\big(\tilde{\tau}^*_\reg(\beta_{n_k})\big)}
    \right)
    -\P\big( |\mathcal{Z} | \geq x\big) \right|.
    \ee
    \item For every $\kappa >0$,
    \begin{equation}\label{d64} 
        \lim_{k\rightarrow 0} \P^*\left(  \left| \frac{ \hat{\sigma}^2_{\reg, *}(\beta_{n_k}) }{\Var^*\big(\widehat{\tau}_{\beta_{n_k}}^*\big)}-1\right|\geq \kappa \right) = 0 , 
    \end{equation}
\end{enumerate}

\end{lemma}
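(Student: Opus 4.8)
The four claims are the randomization-distribution analogues of the statements collected in \Cref{lemma:cov_adj_clt}, and the plan is to transport the proofs of those statements essentially verbatim, working conditionally under $\P^*$ on a set of $Z$-realizations of probability one and feeding in \Cref{l:d5}, \Cref{lemma:maxima_randomization_mcesidual}, and the decomposition \eqref{shake} wherever the observed-data proofs invoked the corresponding inputs for $\epsilon_i(a,\beta_n)$. The starting point is that, because $\phi^*(0,\beta)=\phi^*(1,\beta)$ and $\gamma^*(0,\beta)=\gamma^*(1,\beta)$, the infeasible estimator $\tilde\tau^*_\reg(\beta_n)$ is exactly the difference-in-means statistic, under the randomization $Z^*$, of the $Z$-measurable vector $A_i:=Y_i-\beta_n D_i-x_i'\gamma^*(\beta_n)$; moreover $\sum_i x_i=0$ forces the pooled intercept to equal $\tfrac{1}{n}\sum_i(Y_i-\beta_n D_i)$, so $A_i-\tfrac{1}{n}\sum_j A_j=\epsilon^*_i(\beta_n)$, whence $\E^*[\tilde\tau^*_\reg(\beta_n)]=0$ and $\Var^*(\tilde\tau^*_\reg(\beta_n))=\mathcal{S}^2_{1,*}(\beta_n)/n_1+\mathcal{S}^2_{0,*}(\beta_n)/n_0$ with $\mathcal{S}^2_{1,*}=\mathcal{S}^2_{0,*}=\tfrac{1}{n-1}\sum_i\epsilon^*_i(\beta_n)^2$. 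As before, I would map the indices of a subsequence back to $\{1,2,3,\dots\}$.

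Claim (2), \eqref{d62}, then follows directly: applied to $A_i(1)=A_i(0)=A_i$, the Lindeberg-type hypothesis \eqref{noteme} of \Cref{theorem:clt} reads $\max_{a}n_a^{-2}\max_i\epsilon^*_i(\beta_n)^2/\Var^*(\tilde\tau^*_\reg(\beta_n))\to 0$, which is exactly \Cref{lemma:maxima_randomization_mcesidual}. Since that convergence is almost sure in $Z$, on a probability-one set of realizations \Cref{theorem:clt} gives $\tilde\tau^*_\reg(\beta_n)/\sqrt{\Var^*(\tilde\tau^*_\reg(\beta_n))}\xrightarrow{d}N(0,1)$ under $\P^*$, and \eqref{d62} follows from the continuity of the CDF of $|\mathcal Z|$. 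Claim (1), \eqref{d61}, would repeat the proof of \eqref{lemma:cov_asymptotic_equivalence}: with $x_i^\circ=[0,x_i']'$ and $\tilde x_i=[1,x_i']'$ one has $\hat\tau^*_\reg(\beta_n)-\tilde\tau^*_\reg(\beta_n)=n_1^{-1}\sum_i Z_i^{*}x_i^{\circ\prime}(\hat\psi^*(1,\beta_n)-\psi^*(\beta_n))-n_0^{-1}\sum_i(1-Z_i^{*})x_i^{\circ\prime}(\hat\psi^*(0,\beta_n)-\psi^*(\beta_n))$, and the interacted form \eqref{olsme} gives the analogue of \eqref{olsrep}, namely $\hat\psi^*(a,\beta_n)-\psi^*(\beta_n)=(n_a^{-1}\sum_i Z_i^{*(a)}\tilde x_i\tilde x_i')^{-1}n_a^{-1}\sum_i Z_i^{*(a)}\tilde x_i\epsilon^*_i(\beta_n)$, where $Z_i^{*(1)}=Z_i^*$ and $Z_i^{*(0)}=1-Z_i^*$. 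Under $\P^*$ the quantities $n_a^{-1}\sum_i Z_i^{*(a)}x_i^{\circ}$ have mean $0$ and the matrices $n_a^{-1}\sum_i Z_i^{*(a)}\tilde x_i\tilde x_i'$ have mean $n^{-1}\sum_i\tilde x_i\tilde x_i'$, so by Chebyshev's inequality together with \eqref{vii-covariate} and \eqref{viii-covariate} they obey the analogues of \eqref{apple1}--\eqref{apple3}; and the vector $n_a^{-1}\sum_i Z_i^{*(a)}\tilde x_i\epsilon^*_i(\beta_n)$ has $\P^*$-mean $0$ (since $\sum_i\tilde x_i\epsilon^*_i(\beta_n)=0$ by OLS orthogonality) and $\P^*$-covariance $\tfrac{n_0}{n_1 n(n-1)}\sum_i\tilde x_i\tilde x_i'\epsilon^*_i(\beta_n)^2$, whose spectral norm divided by $\Var^*(\tilde\tau^*_\reg(\beta_n))$ is $o(n)$ by the computation in \eqref{ineq_03062024_c}--\eqref{ineq_03062024_d}, using \eqref{vii-covariate}, \Cref{l:d5} and \Cref{lemma:maxima_randomization_mcesidual} in place of \eqref{d1ii}; this yields the analogue of \eqref{banana}, and multiplying the three pieces together gives \eqref{d61}. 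Claim (3), \eqref{d63}, then follows from \eqref{d61} and \eqref{d62} by the triangle-inequality argument of \Cref{lemma:asymptoticequi}, applied pointwise in $Z$ with $\P_\theta$ replaced by $\P^*$.

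Claim (4), \eqref{d64}, would imitate the proof of \eqref{lastclaim}: on $\{Z_i^*=a\}$ we have $\hat\epsilon^*_i(\beta_n)=\epsilon^*_i(\beta_n)-\tilde x_i'(\hat\psi^*(a,\beta_n)-\psi^*(\beta_n))$, so $n_a^{-2}\sum_i Z_i^{*(a)}\hat\epsilon^*_i(\beta_n)^2$ expands into $n_a^{-2}\sum_i Z_i^{*(a)}\epsilon^*_i(\beta_n)^2$ plus cross and quadratic terms which, by the bound on $\hat\psi^*(a,\beta_n)-\psi^*(\beta_n)$ just obtained and the analogue of \eqref{banana}, are $o_{p^*}(\Var^*(\tilde\tau^*_\reg(\beta_n)))$ — the analogues of \eqref{eqn:asymptotic_equivalence_variance1}--\eqref{eqn:asymptotic_equivalence_variance2}. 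The leading term $n_1^{-2}\sum_i Z_i^*\epsilon^*_i(\beta_n)^2+n_0^{-2}\sum_i(1-Z_i^*)\epsilon^*_i(\beta_n)^2$ has $\P^*$-mean $\tfrac{n-1}{n}\Var^*(\tilde\tau^*_\reg(\beta_n))$ and concentrates at it by Chebyshev, using the fourth-moment control provided by \Cref{lemma:maxima_randomization_mcesidual} and \Cref{l:d5}; hence the ratio in \eqref{d64} tends to $1$ in $\P^*$-probability, almost surely in $Z$. I expect the asymptotic-equivalence step underlying (1) and (4) to be the main obstacle: it requires controlling the randomization-based interacted-regression coefficients $\hat\psi^*(a,\beta_n)$ against the pooled coefficients $\psi^*(\beta_n)$ sharply enough, which calls for a two-layer argument in which \Cref{l:d5} and \Cref{lemma:maxima_randomization_mcesidual} first fix a good realization of $Z$ and, on that realization, the Chebyshev estimates are run over $\P^*$ — including verifying that $n_a^{-1}\sum_i Z_i^{*(a)}\tilde x_i\tilde x_i'$ is invertible with $\P^*$-probability tending to one via \eqref{viii-covariate}.
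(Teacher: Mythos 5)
Your proposal matches the paper's (largely omitted) proof exactly: the paper likewise establishes \eqref{d62} from \Cref{lemma:maxima_randomization_mcesidual} and \Cref{theorem:clt}, proves \eqref{d61} by repeating the argument for \eqref{lemma:cov_asymptotic_equivalence} under $\P^*$, deduces \eqref{d63} from the first two claims, and obtains \eqref{d64} by imitating the proof of \eqref{lastclaim}. Your write-up in fact supplies more detail than the paper, which simply cites the earlier proofs as templates, and your two-layer conditioning remark correctly identifies how the almost-sure inputs from \Cref{l:d5} and \Cref{lemma:maxima_randomization_mcesidual} are meant to be combined with the Chebyshev estimates under $\P^*$.
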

\begin{proof}
The proof of \eqref{d61} uses \Cref{lemma:maxima_randomization_mcesidual} is similar to the proof of \eqref{lemma:cov_asymptotic_equivalence}, so we omit it. Claim \eqref{d62} is implied by  \Cref{lemma:maxima_randomization_mcesidual} and \Cref{theorem:clt}. Claim \eqref{d63} follows from \eqref{d61} and \eqref{d62}. The proof of \eqref{d64} uses \Cref{lemma:maxima_randomization_mcesidual} and is similar to the proof of \eqref{lastclaim}, so we omit it.
\end{proof}

\begin{proof}[Proof of \Cref{thm:coverage2}]
Given \Cref{lemma:clt_cov}, this proof is similar to that of \Cref{t:main}, so we omit the details. 
\end{proof}

\section{Expressions for the regression-adjusted AR statistics}\label{app:reg_AR_expression}
We derive the expressions for the regression-adjusted AR statistics that are similar to (\ref{eqn:AR_sq_ratio}) and (\ref{eqn:AR_sq_ratio_rand}) for the unadjusted AR statistic. 

Denote a column $k$-vector of all one entries by $1_{k}\in\mathbb{R}^{k}$, and denote the $k$-by-$k$ identity matrix as $I_{k}\in\mathbb{R}^{k\times k}$. 
Let $\tilde{X}_1 \in \mathbb{R}^{n_1\times (k+1)}$ and $\tilde{X}_0\in \mathbb{R}^{n_0\times (k+1)}$ be the design matrices (with an intercept) for the treated-group and control-group regressions, respectively. Let $X_1 \in \mathbb{R}^{n_1\times (k+1)}$ and $X_0\in \mathbb{R}^{n_0\times (k+1)}$ be the design matrices for the treated-group and control-group regressions where the intercepts are replaced with zero. Let $Y_1\in\mathbb{R}^{n_1}$ be the column vector of outcomes of the treated group and $Y_0\in\mathbb{R}^{n_0}$ be the column vector of outcomes of the control group, and let $D_1\in\mathbb{R}^{n_1}$ be the column vector of treatment takeups of the treated group and $D_0\in\mathbb{R}^{n_0}$ be the column vector of the treatment takeups of the control group. Note that these quantities are implicit functions of random assignments $\{Z_i\}_{i=1}^n$. 

Define the projection matrices
\begin{align*}
P_1&=X_1\left(\tilde{X}_1'\tilde{X}_1\right)^{-1}\tilde{X}_1'\in \mathbb{R}^{n_1\times n_1},\\
P_0&=X_0\left(\tilde{X}_0'\tilde{X}_0\right)^{-1}\tilde{X}_0'\in \mathbb{R}^{n_0\times n_0},\\
\tilde{P}_1&=\tilde{X}_1\left(\tilde{X}_1'\tilde{X}_1\right)^{-1}\tilde{X}_1'\in \mathbb{R}^{n_1\times n_1},\\
\tilde{P}_0&=\tilde{X}_0\left(\tilde{X}_0'\tilde{X}_0\right)^{-1}\tilde{X}_0'\in \mathbb{R}^{n_0\times n_0}.
\end{align*}

Recall the definition of $\Delta_\reg (\beta)$ from (\ref{adjdelta}). Define
\begin{align}
    t_y &= \frac{1}{n_1}1_{n_1}'(I_{n_1}-P_1)Y_1 - \frac{1}{n_0}1_{n_0}'(I_{n_0}-P_0)Y_0,\\
    t_d &= \frac{1}{n_1}1_{n_1}'(I_{n_1}-P_1)D_1 - \frac{1}{n_0}1_{n_0}'(I_{n_0}-P_0)D_0,\\
    r_y &= \frac{1}{n_1^2}Y_1'(I_{n_1}-\tilde{P}_1)Y_1 +\frac{1}{n_0^2}Y_0'(I_{n_0}-\tilde{P}_0)Y_0,\\
    r_d &= \frac{1}{n_1^2}D_1'(I_{n_1}-\tilde{P}_1)D_1 +\frac{1}{n_0^2}D_0'(I_{n_0}-\tilde{P}_0)D_0,\\
    r_{yd}&= \frac{1}{n_1^2}Y_1'(I_{n_1}-\tilde{P}_1)D_1 + \frac{1}{n_0^2}Y_0'(I_{n_0}-\tilde{P}_0)D_0.
\end{align}

Then we have
\begin{equation}\label{eqn:AR_func_covariate}
    \Delta^2_\reg (\beta) = \frac{a\beta^2+b\beta+c}{d\beta^2+e\beta + f},
\end{equation}
where
\begin{align}
    a = t_d^2, \quad b = -2t_yt_d, \quad c= t_y^2, \quad  d = r_d, \quad e = -2r_{yd}, \quad f= r_y.
\end{align}
The formula for $\Delta^{*2}_\reg (\beta)$ is similar. We have 
\begin{equation}\label{eqn:AR_func_covariate_rand}
    \Delta^{*2}_{\reg,s} (\beta) = \frac{a_s\beta^2+b_s\beta+c_s}{d_s\beta^2+e_s\beta + f_s},
\end{equation}
with the $Z_i$'s are replaced by the simulated assignments $Z_{is}^{*}$'s.

We define
\be\label{eqn:rand_quantile_adj_feasible}
\widehat{\eta}^{\reg,*}_{1-\alpha}(\beta,Z) = \min\left\{ \left|\Delta^*_{\reg,k}(\beta)\right|: 
\frac{1}{\nmc}
\sum_{s=1}^{\nmc} \one \left\{ \big|\Delta^*_{\reg,s}(\beta)\big|\leq \big|\Delta^*_{\reg,k}(\beta)\big| \right\}\geq 1-\alpha\right\},
\ee
and the confidence set of interest is defined as
\begin{equation}\label{eqn:perm_cs_adj}
    \hat I^{\reg,*}_{1-\alpha}(Z) = \big\{ \beta \in \R :  |\Delta_{\reg}(\beta)| \le \hat{\eta}^{\reg,*}_{1-\alpha} (\beta, Z)  \big\}.
\end{equation}
\begin{assumption}\label{assn:positive_variance_adj}
   Let $d\beta^2+e\beta+f$ and $d_k\beta^2+e_k\beta+f_k$ be defined as in (\ref{eqn:AR_func_covariate}) and (\ref{eqn:AR_func_covariate_rand}), respectively. As  functions of $\beta$, they are positive everywhere on $\mathbb{R}$.
\end{assumption}

\begin{algo}\label{alg:1.2}
Constructing Confidence Set (\ref{eqn:perm_cs_adj})
\begin{tabbing}
   \qquad \enspace \textbf{Require:} Observed data $\{Y_i,D_i,Z_i,x_i\}_{i\in [n]}$, simulated assignments $\{Z_{ik}^*\}_{i\in[n],k\in [\nmc]}$\\
   \qquad \enspace \textbf{Step 1:} For every $s,t\in [\nmc]$ that define distinct AR functions (\ref{eqn:AR_func_covariate}),\\
   \qquad \qquad calculate their intersections\\
   \qquad \enspace \textbf{Step 2:} Sort the distinct intersections on the real line and denote the intervals that result by $\bigl\{[\beta_i, \beta_{i+1}]\bigr\}_{i=0}^{K}$\\
   \qquad \enspace \textbf{Step 3:} For each interval $[\beta_i, \beta_{i+1}]$:\\
   \qquad \qquad If $\beta_i\neq -\infty$ and $\beta_{i+1}\neq +\infty$:\\
   \qquad \qquad \qquad Calculate $\mathcal{I}( \left(\beta_i+\beta_{i+1}\right)/2)$, defined  in (\ref{eqn:quantile_set})\\
   \qquad \qquad \qquad Select one $t_i$ such that $t_i\in\mathcal{I}( \left(\beta_i+\beta_{i+1}\right)/2)$\\
   \qquad \qquad If $\beta_i=\beta_0=-\infty$:\\
   \qquad \qquad \qquad Select any $t_i\in \mathcal{I}(\beta_1-1)$\\
   \qquad \qquad If $\beta_{i+1}=\beta_{K+1}=+\infty$:\\
   \qquad \qquad \qquad Select any $t_i\in\mathcal{I}(\beta_{K}+1)$ \\
   \qquad \enspace \textbf{Step 4:} For each interval $\bigl\{[\beta_i, \beta_{i+1}]\bigr\}_{i=0}^{K}$:\\
   \qquad \qquad Find $\mathrm{CS}_i=\bigl\{\beta: \left|\Delta^2(\beta)\right| \leq \left|\Delta^{*2}_{t_i}(\beta)\right|,\beta\in [\beta_i,\beta_{i+1}] \bigr\}$\\
   \qquad \enspace \textbf{Step 5:} Return $\mathrm{CS}=\bigcup_{i=1}^K \mathrm{CS}_i$   
\end{tabbing}
\end{algo}



\section{A Faster Algorithm for Confidence Set Construction}\label{appendix:algo}

In this section, we propose alternative algorithms, which are variants of Algorithm \ref{alg:1.1} and Algorithm \ref{alg:1.2}. We note that many intersections calculated in Algorithm \ref{alg:1.1} or Algorithm \ref{alg:1.2} are ``spurious,'' in the sense that they do not represent intersections of the AR functions that realize the $1-\alpha$ quantile at those locations. It is possible to design algorithms that skip such spurious points. This greatly improves the run time. We include the details in Algorithm \ref{alg:2.1} and Algorithm \ref{alg:2.2} below.

Figure \ref{fig12} provides a graphical illustration for the key step. 
The dashed lines are the AR functions from the simulated assignments. The dot-dashed line is the AR function from the observed data. The solid curve is the quantile function. We use 4 simulated assignments and the 75th percentile for the 
illustration. The confidence set is the region where the dot-dashed curve lies below the solid curve. The vertical lines highlight the intersections of different simulated AR functions. We rescale the $y$-axis and $x$-axis for a better visualization. 
We first observe that the index that realizes the quantile function remains the same between two consecutive intersections. This fact is a consequence of Lemma \ref{lemma:cs_as_intervals} and was used to construct Algorithms \ref{alg:1.1} and \ref{alg:1.2}. 

The key step in Algorithms \ref{alg:2.1} and \ref{alg:2.2} is Step 4. 
To give an example, we pay attention to the vertical lines labeled as $e_i$, $V_{i}$, and $e_{i+1}$. {We note that $V_{i}$ is the next distinct intersection after $e_i$.} From Step 4 in the algorithms, we find the index $s_{i+1}$ that realizes the quantile function on the interval [$e_i$, $V_{i}$]. The index $s_{i+1}$ that realizes the quantile function will remain the same until the AR function indexed by $s_{i+1}$ intersects another AR function. In our figure, this next intersection is labeled as $e_{i+1}$. Therefore, the index $s_{i+1}$ realizes the quantile function on the interval $[e_{i},e_{i+1}]$. Notice how this step skips a ``spurious'' intersection (the one between $V_{i}$ and $e_{i+1}$). This contrasts with Algorithms \ref{alg:1.1} and \ref{alg:1.2}, which require calculating indices $\mathcal{I}(\left(e_{i}+e_{i+1}\right)/2)$ for each pair of $e_{i}$ and $e_{i+1}$.

\begin{algo}\label{alg:2.1}
Constructing Confidence Set (\ref{eqn:perm_cs})
\begin{tabbing}
   \qquad \enspace \textbf{Require:} Observed data $\{Y_i,D_i,Z_i\}_{i\in [n]}$, simulated assignments $\{Z^*_{ik}\}_{i\in[n],k\in [\nmc]}$\\
    \qquad \enspace \textbf{Step 1:} For every $s,t\in [\nmc]$ that define distinct AR functions. \\
    \qquad \qquad calculate the intersections and collect them into a set $\mathcal J$\\
   \qquad \enspace \textbf{Step 2:} Sort the distinct intersections on the real line and denote the intervals that result by $\bigl\{[\beta_i, \beta_{i+1}]\bigr\}_{i=0}^{K}$\\
    \qquad \enspace \textbf{Step 3}: Set  $e_0=-\infty$ and let $s_i$ be any element in $\mathcal{I}(\beta_1-1)$\\
    \qquad \enspace \textbf{Step 4:} For $i=1,...,\infty$:\\
    \qquad \qquad  Find the set of intersections:\\ \qquad \qquad \qquad $\mathcal{C}_i =  \big\{\beta\in\mathbb{R}: \exists s\in[\nmc]\setminus\{s_i \},\beta> e_{i-1} \text{ such that }\Delta^{2*}_s(\beta)=\Delta_{s_i}^{2*}(\beta) \text{ and }\Delta^{2*}_s,\Delta_{s_i}^{2*}\text{ are distinct} \big\}$\\
    \qquad \qquad If $\mathcal{C}_i$ is empty:\\
    \qquad \qquad \qquad Record $\big([e_{i-1},\infty),s_i\big)$ and \textbf{break}\\
    \qquad \qquad Else:\\
    \qquad \qquad \qquad Set $e_{i}=\min\left(\mathcal{C}_i\right)$ and record $\{[e_{i-1},e_{i}],s_i\}$\\
    \qquad \qquad \qquad Set $\mathcal E_i = \{ j \in \mathcal J, j > e_i \}$\\
     \qquad \qquad \qquad  If $\mathcal E_i$ is empty:\\
     \qquad \qquad \qquad \qquad Set $s_i$ to be any element of $\mathcal I(e_i +1)$\\
    \qquad \qquad \qquad  Else:\\
    \qquad \qquad \qquad \qquad  Set $V_i = \min \left(\mathcal E_i\right)$\\
    \qquad \qquad \qquad \qquad Set $s_{i+1}$ to be any element in  $\mathcal{I}( (e_i+V_i)/2)$\\
    \qquad \enspace \textbf{Step 5:} For each interval--index pair in $([e_{k}, e_{k+1}],s_k)$ recorded in the previous steps:\\
    \qquad \qquad Find $\mathrm{CS}_k=\bigl\{\beta: \left|\Delta^2(\beta)\right| \leq \left|\Delta^{*2}_{s_k}(\beta)\right|,\beta\in [e_{k},e_{k+1}] \bigr\}$\\
    \qquad \enspace \textbf{Step 6:} Return $\mathrm{CS}=\bigcup_{k=0}^{n_I}\mathrm{CS}_k$,  where $n_I$ is the number of times the for loop was executed.
\end{tabbing}
\end{algo}

\begin{algo}\label{alg:2.2}
Constructing Confidence Set (\ref{eqn:perm_cs_adj})
\begin{tabbing}
   \qquad \enspace \textbf{Require:} Observed data $\{Y_i,D_i,Z_i,x_i\}_{i\in [n]}$, simulated assignments $\{Z^*_{ik}\}_{i\in[n],k\in [\nmc]}$\\
   \qquad \enspace \textbf{Step 1:} For every $s,t\in [\nmc]$ that define distinct AR functions \eqref{eqn:AR_func_covariate},\\
   \qquad \qquad calculate the intersections and collect them into a set $\mathcal J$\\
   \qquad \enspace \textbf{Step 2:} Sort the distinct intersections on the real line and denote the intervals that result by $\bigl\{[\beta_i, \beta_{i+1}]\bigr\}_{i=0}^{K}$\\
   \qquad \enspace \textbf{Step 3:} Set $e_0=-\infty$ and let $s_i$ be any element in $\mathcal{I}(\beta_1-1)$\\
   \qquad \enspace \textbf{Step 4:} For $i=1,...,\infty$:\\
  \qquad \qquad  Find the set of intersections:\\ \qquad \qquad \qquad $\mathcal{C}_i =  \big\{\beta\in\mathbb{R}: \exists s\in[\nmc]\setminus\{s_i \},\beta> e_{i-1} \text{ such that }\Delta^{2*}_s(\beta)=\Delta_{s_i}^{2*}(\beta) \text{ and }\Delta^{2*}_s,\Delta_{s_i}^{2*}\text{ are distinct} \big\}$\\
   \qquad \qquad If $\mathcal{C}_i$ is empty:\\
    \qquad \qquad \qquad Record $\big([e_{i-1},\infty),s_i\big)$ and \textbf{break}\\
    \qquad \qquad Else:\\
    \qquad \qquad \qquad Set $e_{i}=\min\left(\mathcal{C}_i\right)$ and record $\{[e_{i-1},e_{i}],s_i\}$\\
    \qquad \qquad \qquad Set $\mathcal E_i = \{ j \in \mathcal J, j > e_i \}$\\
     \qquad \qquad \qquad  If $\mathcal E_i$ is empty:\\
     \qquad \qquad \qquad \qquad Set $s_i$ to be any element of $\mathcal I(e_i +1)$\\
    \qquad \qquad \qquad  Else:\\
    \qquad \qquad \qquad \qquad  Set $V_i = \min \left(\mathcal E_i\right)$\\
    \qquad \qquad \qquad \qquad Set $s_{i+1}$ to be any element in  $\mathcal{I}( (e_i+V_i)/2)$\\
     \qquad \enspace \textbf{Step 5:} For each interval--index pair in $([e_{k}, e_{k+1}],s_k)$ recorded in the previous steps:\\
     \qquad \qquad Find: $\mathrm{CS}_k=\bigl\{\beta: \left|\Delta^2(\beta)\right| \leq \left|\Delta^{*2}_{s_k}(\beta)\right|,\beta\in [e_{k},e_{k+1}] \bigr\}$\\
     \qquad \enspace \textbf{Step 6:} Return $\mathrm{CS}=\bigcup_{k=0}^{n_I}\mathrm{CS}_k$, where $n_I$ is the number of times the for loop was executed
\end{tabbing}
\end{algo}

\begin{figure}
    \centering
\includegraphics[width=4in,height=3in]{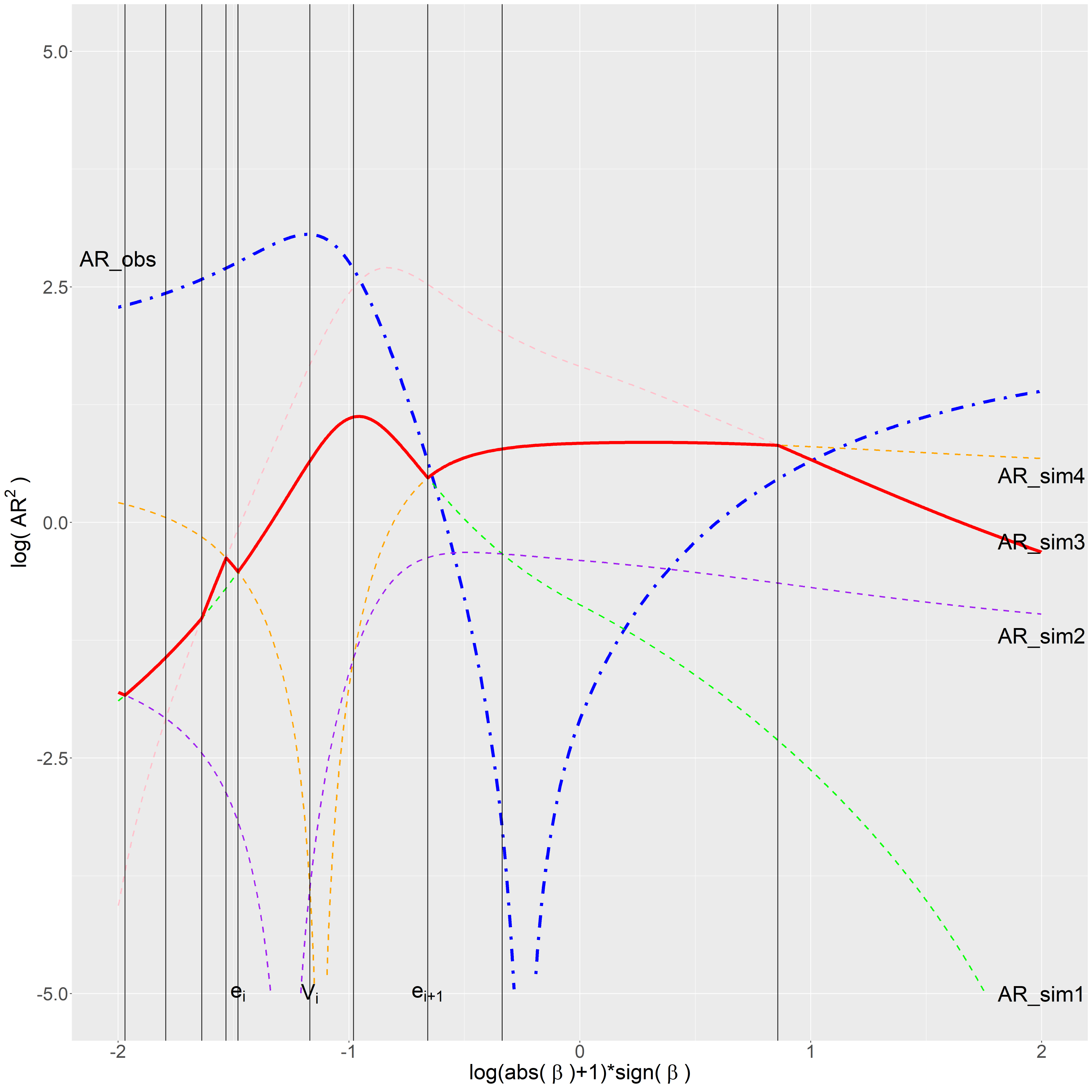}
    \caption{A figure illustrating Algorithm \ref{alg:2.1} and Algorithm \ref{alg:2.2}. The dashed lines are the AR functions from simulated assignments. The dot-dashed line is the AR function from the observed data. The solid curve is the quantile function. We use 4 simulated assignments and the 75th percentile for the illustration. The vertical lines denote the intersections of the simulated AR functions.
    }
    \label{fig12}
\end{figure}

\newpage 
\section{Proof of Theorem \ref{thm:calculation}}\label{section:alg_proof}

We give  proofs for the cases without covariates. The proof for the algorithms with covariates is similar. Let $\mathrm{CS}^*$ denote the confidence set defined in (\ref{eqn:perm_cs}).

We first consider Algorithm \ref{alg:1.1}. With a slight abuse of notation, we mean $(-\infty,\beta_1]$ when we write $[\beta_0,\beta_1]$ with $\beta_0=-\infty$, and similarly we mean $[\beta_K,\infty)$ when we write $[\beta_K,\beta_{K+1}]$ with $\beta_{K+1}=\infty$.
We note that $\bigcup_{i=1}^K [\beta_i,\beta_{i+1}]=\mathbb{R}$ by definition.
Recall that $\mathrm{CS}_i$ is a set returned in Step 4. 
We have

\begin{align}
       \mathrm{CS}_i & =\bigl\{\beta: \left|\Delta^2(\beta)\right| \leq \left|\Delta^{*2}_{t_i}(\beta)\right|,\beta\in [\beta_i,\beta_{i+1}] \bigr\} \label{eqn:0_11262024}\\
       & = \bigl\{\beta: \left|\Delta^2(\beta)\right| \leq \widehat{\eta}^*_{1-\alpha}(\beta,Z),\beta\in [\beta_i,\beta_{i+1}] \bigr\} \label{eqn:11262024} \\
       & = \bigl\{\beta: \left|\Delta^2(\beta)\right| \leq \widehat{\eta}^*_{1-\alpha}(\beta,Z) \bigr\} \cap  [\beta_i,\beta_{i+1}] \\
       & = \mathrm{CS}^* \cap  [\beta_i,\beta_{i+1}] ,\label{eqn:3_11262024}
\end{align}
where (\ref{eqn:11262024}) is justified by Lemma \ref{lemma:cs_as_intervals}. Hence we have 
\[\bigcup_{i=1}^K  \mathrm{CS}_i  =\bigcup_{i=1}^K  \mathrm{CS}^* \cap  [\beta_i,\beta_{i+1}] = \mathrm{CS}^* \cap \bigcup_{i=1}^K [\beta_i,\beta_{i+1}]=\mathrm{CS}^* \cap \mathbb{R} = \mathrm{CS}^*.\]  This proves the theorem for Algorithm \ref{alg:1.1}. 

We next consider Algorithm \ref{alg:2.1}. Recall that $n_I$ is the number of times that the for loop is executed (which is clearly finite, since $\mathcal J$ is). 
We write $[e_0,e_1]$ to denote $(\infty,e_1]$, 
and $[e_{n_I},e_{n_I+1}]$ for $[e_{n_I},\infty)$.
 With these notations, we have  $\bigcup_{i=0}^{n_I} [e_i,e_{i+1}]=\mathbb{R}$. 

We have by construction that $[e_{k},e_{k+1}]=[e_{k},\mathcal{V}_{k}]\cup [\mathcal{V}_{k},e_{k+1}]$. We have 
$s_k\in \mathcal{I}(\beta)$ for all $\beta \in [e_{k},\mathcal{V}_{k}]$ by construction and Lemma $\ref{lemma:cs_as_intervals}$. On $(\mathcal{V}_{k},e_{k+1})$, there is no distinct AR function that intersects the AR function that realizes the $1-\alpha$ quantile.  Hence $s_k\in \mathcal{I}(\beta)$ for all $\beta \in [\mathcal{V}_{k},e_{k+1})$ and by continuity $s_k \in I(e_{k+1})$ if $e_{k+1}<\infty$. 
By the same argument as in (\ref{eqn:0_11262024})--(\ref{eqn:3_11262024}),  the set $\mathrm{CS}_i$ returned in Step 5 of Algorithm \ref{alg:2.1} satisfies $\mathrm{CS}_i=\mathrm{CS}^*\cap [e_{i+1},e_i]$, and hence $\bigcup_{i=0}^{n_I}\mathrm{CS}_i=\mathrm{CS}^*$.

\section{Algebra for Remark \ref{remark2}}\label{appendix:remark}

We first show that (\ref{beta_wald}) satisfies $\widehat{\tau}_{\reg}(\widehat{\beta}^{\reg}_{\textnormal{Wald}})=0$ and, consequently, $\Delta_{\reg}(\widehat{\beta}^{\reg}_{\textnormal{Wald}})=0$, provided that the estimator $\widehat{\beta}^{\reg}_{\textnormal{Wald}}$ is well defined (i.e., its denominator is not zero).

Recall the definition of $\widehat{\gamma}(1,\beta)$ and $\widehat{\gamma}(0,\beta)$ introduced below equation (\ref{olsme}). 
By the linearity of the OLS estimator, they can be rewritten as:
\begin{align}
\widehat{\gamma}(1,\beta) = \widehat{\tau}_{y,1} - \beta \widehat{\tau}_{d,1}, \qquad 
    &\widehat{\gamma}(0,\beta) = \widehat{\tau}_{y,0} - \beta \widehat{\tau}_{d,0},
\end{align}
where $\widehat{\tau}_{y,1}$, $\widehat{\tau}_{y,0}$, $\widehat{\tau}_{d,1}$, and $\widehat{\tau}_{y,0}$ were introduced in Remark \ref{remark2}.

Note that $\widehat{\tau}(\beta)$ can be rewritten as:
\begin{align}
   \widehat{\tau}_{\reg}(\beta)= & \frac{1}{n_1}\sum_{i\in [n]}Z_i\big(Y_i-\beta D_i  -  x_i'\hat \gamma(1, \beta)\big)  \\  & - \frac{1}{n_0}\sum_{i\in [n]}(1-Z_i)\big(Y_i-\beta D_i  - x_i'\hat\gamma(0, \beta)\big), \\
    = & \frac{1}{n_1}\sum_{i\in [n]}Z_i\left(Y_i-x_i'\widehat{\tau}_{y,1}\right) - \frac{1}{n_0}\sum_{i\in [n]}\left(1-Z_i\right)\left(Y_i-x_i'\widehat{\tau}_{y,0}\right)\\
   & - \beta \left(\frac{1}{n_1}\sum_{i\in [n]}Z_i\left(D_i-x_i'\widehat{\tau}_{d,1}\right) - \frac{1}{n_0}\sum_{i\in [n]}\left(1-Z_i\right)\left(D_i-x_i'\widehat{\tau}_{d,0}\right)\right) \label{eqn:I.301_12222024}.
\end{align}
It is then obvious that $\tau_{\reg}(\widehat{\beta}^\reg_{\textnormal{Wald}})=0$ provided that the term \[\frac{1}{n_1}\sum_{i\in [n]}Z_i\left(D_i-x_i'\widehat{\tau}_{d,1}\right) - \frac{1}{n_0}\sum_{i\in [n]}\left(1-Z_i\right)\left(D_i-x_i'\widehat{\tau}_{d,0}\right)\] is nonzero.

We now show that the estimator $\widehat{\beta}^\reg_{\textnormal{Wald}}$ is  algebraically equivalent to the coefficient for $D_i$ in the instrumental variable (IV) regression \begin{equation}
    Y_i\sim \alpha_0 + \alpha_1 D_i + \gamma_{1} Z_ix_i + \gamma_{0}\left(1-Z_i\right)x_i,  
\end{equation}
with the instruments $1, Z_i, Z_ix_i$, and $ \left(1-Z_i\right)x_i$, when both estimators are well defined (i.e., the denominator is not zero and the design matrix is invertible).  

For notational simplicity, we define the column vectors\footnote{We changed the order of the variables for a simplified block representation in (\ref{eqn:I.307_1222024}). } 
\begin{equation}
    \tilde{X}_i= \left(D_i,1,Z_ix_i', \left(1-Z_i\right)x_i'\right)'
\end{equation}
and
\begin{equation}
    \tilde{W}_i=\left(Z_i,1,Z_ix_i', \left(1-Z_i\right)x_i'\right)'.
\end{equation}
We assume that $\sum_{i\in [n]}\tilde{W_i}\tilde{X_i}'$ and $\sum_{i\in [n]}\tilde{W_i}\tilde{W_i}'$ are invertible so that the operations below are well-defined. 

The just-identified IV estimator can be represented as:
\begin{equation}
\begin{bmatrix}
    \widehat{\alpha}_{1,\textnormal{IV}}\\
    \widehat{\alpha}_{0,\textnormal{IV}}\\ \widehat{\gamma}_{1,\textnormal{IV}}\\
    \widehat{\gamma}_{0,\textnormal{IV}} 
\end{bmatrix} =
    \left(\frac{1}{n}\sum_{i\in [n]}\tilde{W_i}\tilde{X_i}'\right)^{-1} \frac{1}{n}\sum_{i\in [n]}\tilde{W_i}Y_i .\label{eqn:I.305_20222024}
\end{equation}
We can further represent the estimator as
\begin{align}
& \left(\frac{1}{n}\sum_{i\in [n]}\tilde{W_i}\tilde{X_i}'\right)^{-1} \frac{1}{n}\sum_{i\in [n]}\tilde{W_i}\tilde{W_i}' \left(\frac{1}{n}\sum_{i\in [n]}\tilde{W_i}\tilde{W_i}'\right)^{-1}\frac{1}{n}\sum_{i\in [n]}\tilde{W_i}Y_i. 
\end{align}
We note that the first-stage regression can be written as
\begin{align}\label{eqn:I.307_1222024}
  \left(\frac{1}{n}\sum_{i\in [n]}\tilde{W_i}\tilde{W_i}'\right)^{-1}\frac{1}{n}\sum_{i\in [n]}\tilde{W_i}\tilde{X}_i' = 
\begin{pNiceArray}{c|ccc}
 \widehat{\alpha}_{d,1} & 0 & \hdots & 0 \\
 \hline 
 \widehat{\alpha}_{d,0} &  \Block{3-3}{I_{2k+1}} \\
 \widehat{\gamma}_{d,1} \\
 \widehat{\gamma}_{d,0} 
\end{pNiceArray},
\end{align}
where $I_{2k+1}$ is the $(2k+1)\times (2k+1)$ identity matrix, and $ \widehat{\alpha}_{d,1}$,  $\widehat{\alpha}_{d,0}$, $\widehat{\gamma}_{d,1}$, and $\widehat{\gamma}_{d,0}$ are the OLS estimators from the corresponding regression:
\begin{equation}
    D_i \sim \alpha_{d,1} Z_i +  \alpha_{d,0} +Z_ix_i'\gamma_{d,1} + \left(1-Z_i\right)x_i'\gamma_{d,0}.
\end{equation}
The reduced-form regression estimator can be written as
\begin{equation}
    \left(\frac{1}{n}\sum_{i\in [n]}\tilde{W_i}\tilde{W_i}'\right)^{-1}\frac{1}{n}\sum_{i\in [n]}\tilde{W_i}Y_i=\begin{bmatrix}
        \widehat{\alpha}_{y,1}\\ \widehat{\alpha}_{y,0} \\
         \widehat{\gamma}_{y,1}\\
          \widehat{\gamma}_{y,0}
    \end{bmatrix},
\end{equation}
where $ \widehat{\alpha}_{y,1}$,  $\widehat{\alpha}_{y,0}$, $\widehat{\gamma}_{y,1}$, and $\widehat{\gamma}_{y,0}$ are the OLS estimators from the corresponding regression:
\begin{equation}
    Y_i \sim \alpha_{y,1} Z_i +  \alpha_{y,0} +Z_ix_i'\gamma_{y,1} + \left(1-Z_i\right)x_i'\gamma_{y,0}.
\end{equation}
Equation (\ref{eqn:I.305_20222024}) implies that the IV estimator satisfies the identity
\begin{equation}
    \begin{pNiceArray}{c|ccc}
 \widehat{\alpha}_{d,1} & 0 &\hdots & 0 \\
 \hline 
 \widehat{\alpha}_{d,0} &  \Block{3-3}{I_{2k+1}} \\
 \widehat{\gamma}_{d,1} \\
 \widehat{\gamma}_{d,0}, 
\end{pNiceArray}\begin{bmatrix}
    \widehat{\alpha}_{1,\textnormal{IV}}\\
    \widehat{\alpha}_{0,\textnormal{IV}}\\ \widehat{\gamma}_{1,\textnormal{IV}}\\
    \widehat{\gamma}_{0,\textnormal{IV}} 
\end{bmatrix} = \begin{bmatrix}
        \widehat{\alpha}_{y,1}\\ \widehat{\alpha}_{y,0} \\
         \widehat{\gamma}_{y,1}\\
          \widehat{\gamma}_{y,0},
    \end{bmatrix}.
\end{equation}
The first row of the matrix identity implies
\begin{equation}
    \widehat{\alpha}_{d,1}  \widehat{\alpha}_{1,\textnormal{IV}} =  \widehat{\alpha}_{y,1}.
\end{equation}
By the OLS representation in \cite{lin2013agnostic}, we have:
\begin{equation}
     \widehat{\alpha}_{y,1} = \frac{1}{n_1}\sum_{i\in [n]}Z_i\left(Y_i-x_i'\widehat{\gamma}_{y,1}\right) - \frac{1}{n_0}\sum_{i\in [n]}\left(1-Z_i\right)\left(Y_i-x_i'\widehat{\gamma}_{y,0}\right)
\end{equation}
and
\begin{equation}
     \widehat{\alpha}_{d,1} = \frac{1}{n_1}\sum_{i\in [n]}Z_i\left(D_i-x_i'\widehat{\gamma}_{d,1}\right) - \frac{1}{n_0}\sum_{i\in [n]}\left(1-Z_i\right)\left(D_i-x_i'\widehat{\gamma}_{d,0}\right).
\end{equation}
Hence, provided that $ \widehat{\alpha}_{d,1}$ is nonzero,
\begin{equation}
\widehat{\alpha}_{1,\textnormal{IV}}=\frac{\frac{1}{n_1}\sum_{i\in [n]}Z_i\left(Y_i-x_i'\widehat{\gamma}_{y,1}\right) - \frac{1}{n_0}\sum_{i\in [n]}\left(1-Z_i\right)\left(Y_i-x_i'\widehat{\gamma}_{y,0}\right)}{ \frac{1}{n_1}\sum_{i\in [n]}Z_i\left(D_i-x_i'\widehat{\gamma}_{d,1}\right) - \frac{1}{n_0}\sum_{i\in [n]}\left(1-Z_i\right)\left(D_i-x_i'\widehat{\gamma}_{d,0}\right)}=\widehat{\beta}^{\reg}_{\textnormal{Wald}},
\end{equation}
as desired.

\bibliographystyle{abbrvnat}
\bibliography{IV}

\end{document}